\newenvironment{dedication}
        {\vspace{1cm}\begin{quotation}\begin{center}\begin{em}}
        {\par\end{em}\end{center}\end{quotation}}
\newcommand{\Z}{\mathbb{Z}}
\newcommand{\Q}{\mathbb{Q}}
\newcommand{\R}{\mathbb{R}}
\newtheorem{defn}{Definition}[section]
\newtheorem{thm}[defn]{Theorem}
\newtheorem{prop}[defn]{Proposition}
\newtheorem{lemma}[defn]{Lemma}
\title{$\Z^{2}$-dimension groups}
\author{Thierry Giordano\thanks{Supported in part by a
grant from NSERC, Canada}, \\
Department of Mathematics and Statistics, \\
University of Ottawa,\\
 STEM Complex, room 342 \\
150 Louis-Pasteur Pvt \\
Ottawa, ON, Canada K1N 6N5
 \and
 Ian F. Putnam\thanks{Supported in part by a
grant from NSERC, Canada},\\
Department of Mathematics and Statistics,\\
University of Victoria,\\
Victoria, B.C., Canada V8W 3R4
\and 
Christian F. Skau\thanks{Supported in
part by the Norwegian Research Council}, \\
Department of Mathematical Sciences, \\
Norwegian
University of Science and Technology (NTNU), \\
N-7034 Trondheim, Norway}
\date{}
\begin{document}
\maketitle

\begin{dedication}
{Dedicated to the memory of Anatoly Moiseevich Vershik
and his profound contributions to ergodic theory, dynamical systems, operator algebras
and representation theory}
\end{dedication}

\pagebreak 

\begin{abstract}
We study a class of simple dimension groups in which the cyclic
subgroup generated by the order unit is replaced
by a copy of $\Z^{2}$ satisfying some strict conditions.
Our main results are necessary and sufficient conditions
on a Bratteli diagram  which provides inductive limit structures
for  such groups. This result
has an important application in constructing a version of the 
Bratteli-Vershik model for minimal actions of $\Z^{2}$ on the Cantor set 
which
will be the subject of a subsequent paper.
\end{abstract}

\section{Introduction and statements of the results}

Dimension groups were first introduced in George Elliott's 
seminal paper classifying inductive limits of finite dimensional
$C^{*}$-algebras (or AF-algebras) by their scaled, ordered K-theory.
We refer the reader to Elliott's original paper \cite{Ell:AF},
Effros'  lecture notes \cite{Eff:CBMS}, the comprehensive 
books of Goodearl \cite{Good:book} and Durand and Perrin \cite{DP:book}.

An \emph{ordered abelian group}
 is a pair $(G, G^{+})$, where $G$ is an abelian group 
(here, these will always be countable) and a subset $G^{+}$ of $G$ 
satisfying \begin{enumerate}
\item $G^{+} + G^{+} \subseteq G^{+}$, 
\item $G^{+} - G^{+} = G$,
\item $G^{+} \cap (-G^{+}) = \{ 0 \}$.
\end{enumerate}
The subset $G^{+}$ is called the \emph{positive cone} for $G$.
(Sometimes, for K-theoretic considerations, the final axiom is dropped.)

We also recall that an element $u$ of $G^{+}$ is an \emph{order unit} if, 
for every $a$ in $G$, there is a positive integer $k$ such that 
$ku-a$ is in $G^{+}$. A \emph{positive group homomorphism} from 
$(G, G^{+})$ to $(H, H^{+})$ is a group homomorphism 
$\alpha:G \rightarrow H$ such that $\alpha(G^{+})\subseteq H^{+}$.
If $(G, G^{+})$ has an order unit $u$, then a \emph{state with respect to $u$} is a 
positive group homomorphism $\sigma$
 from $(G, G^{+})$ to $(\R, \R^{+})$ such that $\sigma(u) =1$. For the most part, 
 we will
 consider the case where a unique order unit has been specified 
 and simply refer to $\sigma$ as a state. The set of 
 all states is naturally a subset of $\R^{G}$ and is a compact, convex set. 
 A state is extremal if it is an extreme point of the convex set
 of states.

The principal examples of  countable ordered abelian 
groups are the free abelian 
groups with simplicial or standard order:
 take $I \geq 1$, the group $\Z^{I}$ with 
$\Z^{I+} = \{ (n(1), \ldots, n(I)) \mid n(1), \ldots, n(I) \geq 0 \}$.
 It is an easy exercise
to see that any group homomorphism $\varphi: \Z^{I} \rightarrow \Z^{J}$
arises as $\varphi(n) = An, n \in \Z^{I}$, where $A$ is a 
$J \times I$-matrix with integer entries
and the homomorphism is positive if and only if $A$ has non-negative entries.
Here, of course, we must regard $\Z^{I}$ as column vectors. Usually, 
we will use Roman letters for column vectors and Greek letters 
for row vectors.

We can construct inductive limits of these groups as follows:
begin with a sequence of positive integers, $I_{n}, n \geq 1,$ and 
non-negative integer matrices $A_{n}, n \geq 1$, where $A_{n}$ is 
of size
$I_{n+1} \times I_{n}$. We have the sequence
 \[
\xymatrix{
 \Z^{I_{1} }  \ar[r]^{A_{1}} & \Z^{I_{2} }  \ar[r]^{A_{2}} &
  \cdots \ar[r] &  \cdots } 
\]
This data can be encoded by a combinatorial object called a 
Bratteli diagram, which we denote by $(V,E)= (V_{n}, E_{n}), n \geq 1$
 \cite{Bra:BD,DP:book,Put:CMS}. 
The vertex set $V_{n} $ is $\{ 1, 2, \ldots, I_{n} \}$, while we
let the $j,i$-entry of
$A_{n}$, written $A_{n}(j,i)$,  be the number of edges 
in  $E_{n}$ from vertex $i$ in $V_{n}$ to vertex $j$ in $V_{n+1}$.

The inductive limit in the category of ordered abelian groups
of this may be described concretely: see \cite{Put:CMS}.
It is an ordered abelian group $(G, G^{+})$ and, for each $n \geq 0$, 
we have a canonical positive homomorphism 
$\gamma_{n}: \Z^{I_{n}} \rightarrow G$.

A dimension group is any ordered abelian group obtained by this
construction. We remark that it follows easily 
from the construction that such a group
is torsion-free. The remarkable Effros-Handelman-Shen Theorem 
(Theorem 2.4.3 of \cite{DP:book} or Theorem 8.4 of \cite{Put:CMS}) 
asserts that an ordered abelian group is a dimension group if and 
only if it is countable, unperforated and satisfies Riesz
interpolation. In fact, some treatments prefer this description
as the definition of dimension group.

The 'only if' direction is relatively simple: the specific properties
are satisfied by the ordered groups 
$(\Z^{I}, \Z^{I+})$ and are preserved under
inductive limits. The 'if' direction is deep and powerful.

Aside from simply giving the existence of inductive limit structures
for these groups, it also raises an important question: how are structural
properties of the inductive limit group manifested in the combinatorial 
properties of the inductive system? Given an inductive system
as above, does some property of the limit group yield combinatorial information
about the system? This type of question has three 
kinds of answers. The first is information directly about the given system. As 
an example, the lattice of order ideals in the group determines the 
lattice of hereditary subsets of the diagram. (We say 
$X \subseteq \cup_{n} V_{n}$ is hereditary if, for any $i$ in $X \cap V_{n}$ and 
$j$ in $V_{n+1}$, if $A_{n}(j,i) \neq 0$, then $j$ is in $X$ also.)
The second type of answer is that the system has some property, 
after telescoping \cite{DP:book}. For example, if the 
group is simple (i.e. has no order ideals), then we
can assume $A_{n}$ has strictly positive entries for all $n$, after telescoping.
The third type of answer is that the diagram is equivalent to one having some
property. Or put in another way, any group with a specific
property may be written as an inductive system of some type.
The main result of this paper is an answer of the third type.

Since we will always be dealing with simple dimension groups, let us
remark that a dimension group is simple if and only if every 
non-zero positive element  (called strictly positive) 
is an order unit (see  section 2.2.1  of \cite{DP:book}).

The dimension groups we will consider contain a canonical copy of 
$\Z^{2}$ satisfying some properties, hence the title of the paper.
One simple example is $ \Q^{2}$. More generally, one can consider
any countable dense subgroup of $\R^{2}$ which contains $\Z^{2}$.
We remark that, unlike the situation for 
$\Q$, there is no classification of subgroups
of $\Q^{2}$ \cite{Thom:class}.
Our starting point is  the following relatively simple result. 

\begin{thm}
\label{intro:10}
Let $G$ be a
countable, torsion-free abelian group and let 
$\sigma:G \rightarrow \R^{2}$ be a group homomorphism satisfying:
\begin{enumerate}
\item $\Z^{2} \subseteq \sigma(G)$, 
\item $\sigma(G) \subseteq \R^{2}$ is dense.
\end{enumerate}
If we define
\[
G^{+} =  \{ 0 \} \cup \sigma^{-1}((0, \infty) \times (0, \infty)),
\]
then $G$ is a simple dimension group.
Further, if $u^{1}$ and $u^{2}$  
are any elements of $G$ with $\sigma(u^{1}) = (1,0)$ 
and $\sigma(u^{2}) = (0,1)$,
 then neither is positive while 
$u= u^{1} + u^{2}$ is an order unit for $G$.

Letting $\sigma_{1}, \sigma_{2}:G \rightarrow \R$  be 
the two coordinate functions of $\sigma$; that is,
$\sigma(g) = (\sigma_{1}(g), \sigma_{2}(g))$, for all $g$ in $G$, then
$\sigma_{1}, \sigma_{2}$ are states on $(G, G^{+})$. Moreover, these are the
only extremal states on $G$.
\end{thm}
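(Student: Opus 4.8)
\emph{Plan.} The strategy is to show $(G,G^{+})$ is a dimension group by verifying the hypotheses of the Effros--Handelman--Shen theorem quoted above, and then to read off simplicity, the statements about $u^{1},u^{2},u$, and the description of the states directly from the formula for $G^{+}$; the substantive part is the identification of the extremal states at the end. I begin with the ordered-abelian-group axioms. The basic observation, used throughout, is that a \emph{nonzero} $g\in G$ lies in $G^{+}$ precisely when $\sigma(g)\in(0,\infty)\times(0,\infty)$, and that $\sigma(g)$ in this open quadrant already forces $g\neq 0$ (since $\sigma(0)=0$). Closure of $G^{+}$ under addition and $G^{+}\cap(-G^{+})=\{0\}$ are then immediate (the second also uses torsion-freeness, so that $g\neq 0$ gives $-g\neq 0$). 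For $G^{+}-G^{+}=G$, given $g\in G$ the open set $(0,\infty)^{2}\cap(\sigma(g)+(0,\infty)^{2})$ is nonempty, so by density (hypothesis (2)) there is $h\in G$ with $\sigma(h)$ in it; then $h,h-g\in G^{+}$ and $g=h-(h-g)$.

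Next I turn to Effros--Handelman--Shen. Countability holds by hypothesis. Unperforation: if $ng\in G^{+}$ with $n\geq 1$, then either $ng=0$, so $g=0$ by torsion-freeness, or $n\sigma(g)\in(0,\infty)^{2}$, so $\sigma(g)\in(0,\infty)^{2}$ and $g\in G^{+}$. For Riesz interpolation, let $a_{1},a_{2}\leq b_{1},b_{2}$. First handle the degenerate cases where two of these coincide: if $a_{1}=a_{2}$ take $c=a_{1}$, if $b_{1}=b_{2}$ take $c=b_{1}$, and if $a_{i}=b_{j}$ for some $i,j$ take $c$ equal to that common element; in each case the needed inequalities hold because the relevant differences are $0$ or already among the given ones. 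Otherwise all four relations $a_{i}\leq b_{j}$ are strict, hence $\sigma(b_{j})-\sigma(a_{i})\in(0,\infty)^{2}$; setting $p_{k}=\max(\sigma_{k}(a_{1}),\sigma_{k}(a_{2}))$ and $q_{k}=\min(\sigma_{k}(b_{1}),\sigma_{k}(b_{2}))$ for $k=1,2$ one gets $p_{k}<q_{k}$, and by density there is $c\in G$ with $\sigma(c)\in(p_{1},q_{1})\times(p_{2},q_{2})$; then $\sigma(c-a_{i})$ and $\sigma(b_{j}-c)$ lie in $(0,\infty)^{2}$, giving $a_{i}\leq c\leq b_{j}$. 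So $(G,G^{+})$ is a dimension group. Simplicity: a nonzero $a\in G^{+}$ has $\sigma(a)\in(0,\infty)^{2}$, so for any $g\in G$ and all sufficiently large $k$ we have $k\sigma(a)-\sigma(g)\in(0,\infty)^{2}$, i.e.\ $ka-g\in G^{+}$; thus every nonzero positive element is an order unit, which is exactly the criterion for simplicity recalled earlier. Finally $\sigma(u^{1})=(1,0)$ and $\sigma(u^{2})=(0,1)$ each have a zero coordinate, so $u^{1},u^{2}\notin G^{+}$, whereas $\sigma(u)=(1,1)\in(0,\infty)^{2}$ makes $u$ a nonzero positive element, hence an order unit; and $\sigma_{1},\sigma_{2}$ are positive homomorphisms $(G,G^{+})\to(\R,\R^{+})$ with $\sigma_{1}(u)=\sigma_{2}(u)=1$, hence states.

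It remains to show that $\sigma_{1},\sigma_{2}$ are the only extremal states; take the order unit to be $u=u^{1}+u^{2}$, so $\sigma(u)=(1,1)$, and let $\tau$ be any state. I first claim $\tau$ vanishes on $\ker\sigma$: if $\sigma(g)=0$ then $\sigma(u\pm g)=(1,1)\in(0,\infty)^{2}$, so $u\pm g\in G^{+}$ and hence $|\tau(g)|\leq\tau(u)=1$; applying this to $ng$ for every $n\geq 1$ forces $\tau(g)=0$. Thus $\tau=\bar\tau\circ\sigma$ for a homomorphism $\bar\tau:\sigma(G)\to\R$ which is nonnegative on $\sigma(G^{+})\setminus\{0\}=\sigma(G)\cap(0,\infty)^{2}$ and satisfies $\bar\tau(1,1)=1$. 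Set $s=\bar\tau(1,0)$, $t=\bar\tau(0,1)$, so $s+t=1$; since $(M,1),(1,M)\in\sigma(G)\cap(0,\infty)^{2}$ for each integer $M\geq 1$, the inequalities $\bar\tau(M,1)=Ms+t\geq 0$ and $\bar\tau(1,M)=s+Mt\geq 0$ for all $M$ force $s\geq 0$ and $t\geq 0$. Now fix $x\in\sigma(G)$ and $n\geq 1$, and choose $m\in\Z^{2}$ with $m_{k}$ the largest integer $<nx_{k}$ and $m'\in\Z^{2}$ with $m'_{k}$ the smallest integer $>nx_{k}$; then $nx-m$ and $m'-nx$ both lie in $\sigma(G)\cap(0,\infty)^{2}$, so $\bar\tau(nx-m)=n\bar\tau(x)-(m_{1}s+m_{2}t)\geq 0$ and $\bar\tau(m'-nx)=(m'_{1}s+m'_{2}t)-n\bar\tau(x)\geq 0$. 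Using $s,t\geq 0$, $s+t=1$, $m_{k}\geq nx_{k}-1$ and $m'_{k}\leq nx_{k}+1$, this squeezes $n\bar\tau(x)$ between $n(sx_{1}+tx_{2})-1$ and $n(sx_{1}+tx_{2})+1$; dividing by $n$ and letting $n\to\infty$ gives $\bar\tau(x)=sx_{1}+tx_{2}$. Hence $\tau=s\sigma_{1}+t\sigma_{2}$ with $s,t\geq 0$, $s+t=1$. Since $\sigma_{1}(u^{1})=1\neq 0=\sigma_{2}(u^{1})$, we have $\sigma_{1}\neq\sigma_{2}$, so the state space is the nondegenerate segment $\{\,s\sigma_{1}+(1-s)\sigma_{2}\;:\;0\leq s\leq 1\,\}$, whose extreme points are exactly its endpoints $\sigma_{1}$ and $\sigma_{2}$.

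The step I expect to be the main obstacle is the one just sketched in the third paragraph: turning the \emph{weak} positivity that $\bar\tau$ inherits from $\tau$ --- nonnegativity only on the open quadrant, which is blind to the two coordinate axes --- into genuine $\R$-linearity of $\bar\tau$, given that $\bar\tau$ is a priori merely a group homomorphism on the subgroup $\sigma(G)\subseteq\R^{2}$. The mechanism that does this is dilation by $n$ combined with integer approximation, which inflates any discrepancy between $\bar\tau(x)$ and $sx_{1}+tx_{2}$ and then drives it to $0$. A smaller point to watch throughout is the bookkeeping that an element whose $\sigma$-image lands in the open quadrant is automatically nonzero, so that it really does lie in $G^{+}$ and $\bar\tau$ really is nonnegative on it.
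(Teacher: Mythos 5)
Your proof is correct, and for the part of the theorem the paper actually argues it follows the same route: verify that $(G,G^{+})$ is an unperforated ordered group satisfying Riesz interpolation (the paper simply cites p.~16 of Effros for the interpolation step, whereas you carry out the density argument, including the degenerate cases, explicitly) and then invoke Effros--Handelman--Shen, with simplicity read off from the fact that every strictly positive element is an order unit. Where you genuinely go beyond the paper is the final claim that $\sigma_{1},\sigma_{2}$ are the \emph{only} extremal states: the paper offers no argument for this in the vicinity of Theorem~\ref{intro:10} (the analogous fact is only established for the concrete inductive systems later, in Lemma~\ref{suff:30}, by entirely different means involving the matrix estimates). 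Your mechanism --- showing a state $\tau$ kills $\ker\sigma$ because $u\pm ng$ is positive for all $n$, inducing $\bar\tau$ on $\sigma(G)$, extracting $s=\tau(u^{1})\ge 0$, $t=\tau(u^{2})\ge 0$ from the positivity of $\bar\tau$ on $(M,1)$ and $(1,M)$, and then forcing $\bar\tau(x)=sx_{1}+tx_{2}$ by dilating and sandwiching between integer points --- is sound and self-contained, and it identifies the state space as the segment joining $\sigma_{1}$ and $\sigma_{2}$ rather than appealing to general dimension-group theory. This is a legitimate, more elementary completion of a step the paper leaves to the reader.
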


It is a fairly simple matter to show that $(G, G^{+})$ is unperforated 
and, as shown on page 16 of \cite{Eff:CBMS}, 
 satisfies Riesz interpolation (as a consequence of condition 2).
The  Effros-Handelman-Shen Theorem then asserts that $(G, G^{+})$ may be written as 
an inductive limit of groups $(\Z^{I}, \Z^{I+})$, with the usual simplicial (direct sum)
order. It is a simple matter to check that every 
strictly positive element
is an order unit, which implies the ordered group is simple.
Unfortunately, the proof of the Effros-Handelman-Shen
 Theorem is not really helpful in 
explicitly constructing
such an inductive system. This is the main topic of the paper.

Before going on, it is probably
 worth noting  a kind of converse to Theorem \ref{intro:10}. The result is almost
trivial but it does provide a good description of exactly which 
dimension groups arise in this context.

\begin{thm}
\label{intro:20}
Let $(G, G^{+})$ be a simple dimension group.
 Suppose that $(G, G^{+})$
has exactly two extremal states, $\sigma_{1}, \sigma_{2}$, and 
two elements $u^{1}, u^{2}$ such that 
\begin{eqnarray}
\sigma_{1}(u^{1})  & = \sigma_{2}(u^{2}) & = 1, \\
\sigma_{1}(u^{2})  & = \sigma_{2}(u^{1}) & = 0.
\end{eqnarray}
If we define $\sigma:G \rightarrow \R^{2}$ by 
$\sigma(g) = ( \sigma_{1}(g), \sigma_{2}(g))$, for 
$g$ in $G$, then  
\[
G^{+} = \{ 0 \} \cup \sigma^{-1}( (0, \infty) \times (0, \infty) ),
\]
$\Z^{2} \subseteq \sigma(G)$ and $\sigma(G)$ is dense in $\R^{2}$.
\end{thm}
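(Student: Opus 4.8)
The plan is to verify each of the three asserted conclusions in turn, working directly from the hypotheses that $(G,G^+)$ is a simple dimension group with exactly two extremal states $\sigma_1,\sigma_2$ normalized by the given equations on $u^1,u^2$. The key structural input I would invoke is the standard fact (see Goodearl \cite{Good:book} or \cite{DP:book}) that in a simple dimension group with order unit, an element $g$ is strictly positive (i.e. in $G^+\setminus\{0\}$) if and only if $\tau(g)>0$ for \emph{every} state $\tau$, equivalently for every \emph{extremal} state; and $g\ge 0$ precisely when $g=0$ or $g$ is strictly positive. Since the extremal states here are exactly $\sigma_1$ and $\sigma_2$, and since every state is a convex combination of the extremal ones (the state space is a Choquet simplex, here a line segment), the condition ``$\tau(g)>0$ for all states $\tau$'' collapses to ``$\sigma_1(g)>0$ and $\sigma_2(g)>0$'', i.e. $\sigma(g)\in(0,\infty)\times(0,\infty)$. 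This is exactly the claimed description $G^+=\{0\}\cup\sigma^{-1}((0,\infty)\times(0,\infty))$.

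Next I would establish $\Z^2\subseteq\sigma(G)$. This is immediate from the normalization: $\sigma(u^1)=(\sigma_1(u^1),\sigma_2(u^1))=(1,0)$ and $\sigma(u^2)=(0,1)$, so the images $\sigma(u^1),\sigma(u^2)$ form a standard basis of $\Z^2$ inside $\sigma(G)$, and since $\sigma$ is a homomorphism its image is a subgroup, hence contains all integer combinations, i.e. all of $\Z^2$.

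Finally, density of $\sigma(G)$ in $\R^2$. Here I would argue that if $\sigma(G)$ were not dense, its closure would be a proper closed subgroup of $\R^2$, hence contained in a one-dimensional affine-linear object: either a line through the origin, or a ``grid'' of the form $\R v_1 + \Z v_2$, or a lattice $\Z v_1+\Z v_2$, or $\{0\}\times\R$-type pieces, etc.; in all cases $\overline{\sigma(G)}$ lies in a coset structure that is contained in the preimage of some line or strip, which forces a nontrivial linear functional $\lambda$ on $\R^2$ with $\lambda\circ\sigma$ either identically a fixed value or taking values in a discrete set on $G$. I would then derive a contradiction with the hypothesis that $\sigma_1,\sigma_2$ are the \emph{only} two extremal states: any positive linear functional on $\R^2$ that is nonnegative on the first quadrant pulls back to a state on $(G,G^+)$ (after normalization on the order unit $u=u^1+u^2$, noting $\sigma(u)=(1,1)$), so the quarter-circle's worth of such functionals would give uncountably many states, and among them more than two extreme points, unless the state space is genuinely a segment — which happens precisely when the image is two-dimensional (dense). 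More carefully: the state space of $(G,G^+)$ is affinely isomorphic to the closed convex hull in $(\R^2)^*$ of the functionals that are nonnegative on $\overline{\sigma(G)}\cap([0,\infty)^2)$ and equal to $1$ on $(1,1)$; if $\overline{\sigma(G)}=\R^2$ this is the segment joining $\sigma_1$ and $\sigma_2$ (two extreme points, consistent), but if $\overline{\sigma(G)}$ is a proper subgroup the set of admissible functionals is larger, producing a third extreme state and contradicting the hypothesis. The main obstacle is making this last step fully rigorous — pinning down the classification of proper closed subgroups of $\R^2$ and checking in each case that the induced state space acquires an extra extreme point; the alternative, cleaner route I would actually pursue is to use that the states of $(G,G^+)$ are exactly the normalized positive homomorphisms, show each factors through $\sigma$ (because any state is dominated appropriately by $\sigma_1+\sigma_2=\sigma(\,\cdot\,)$ paired with $(1,1)$, using simplicity and the two-extreme-point hypothesis to force the factorization), and conclude that the state space ``is'' the positive-functional description of $\overline{\sigma(G)}$, whence two extreme points forces $\overline{\sigma(G)}=\R^2$.
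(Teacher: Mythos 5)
Your treatment of the first two conclusions is correct and coincides with the paper's: the description of $G^{+}$ follows from the standard fact that in a simple dimension group with order unit an element is positive iff it is zero or strictly positive under every state, and every state is a convex combination of the two extremal ones; the containment $\Z^{2}\subseteq\sigma(G)$ is immediate from $\sigma(u^{1})=(1,0)$, $\sigma(u^{2})=(0,1)$.

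The density argument, however, has a genuine gap, and it is precisely the step you flag as the ``main obstacle.'' Your proposed mechanism is that non-density of $\sigma(G)$ would enlarge the state space and produce a third extremal state. This is false in exactly the degenerate cases you need to exclude. Take $G=\Z^{2}$ with $G^{+}=\{0\}\cup(\Z_{>0})^{2}$ and $\sigma$ the identity: the states normalized at $u=(1,1)$ are precisely $\alpha\sigma_{1}+\beta\sigma_{2}$ with $\alpha,\beta\geq 0$, $\alpha+\beta=1$ (positivity on $(1,n)$ for large $n$ forces $\beta\geq 0$, and symmetrically $\alpha\geq 0$), so the state space is still a segment with exactly two extreme points even though the image is the non-dense subgroup $\Z^{2}$. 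The same computation goes through when $\overline{\sigma(G)}$ is of the form $\R\times\Z$ (e.g.\ $\sigma(G)=\Q\times\Z$). What actually rules these cases out is not the count of extremal states but Riesz interpolation: for instance, in $(\Z^{2},\{0\}\cup(\Z_{>0})^{2})$ the elements $a_{1}=(0,0)$, $a_{2}=(1,-5)$, $b_{1}=(2,1)$, $b_{2}=(3,1)$ satisfy $a_{i}\leq b_{j}$ but admit no interpolant, so this is not a dimension group at all. Since your argument uses only the structure of the state space of an ordered group --- data that cannot distinguish these non-examples from genuine dimension groups --- it cannot prove density. The paper's route injects the interpolation hypothesis through the standard theorem (Theorem 4.4 of Effros's notes, or Chapter 8 of Goodearl) that for a simple noncyclic dimension group the image of $G$ in $\mathrm{Aff}(S(G,u))$ is dense; here the state space is the segment with endpoints $\sigma_{1},\sigma_{2}$, so $\mathrm{Aff}(S(G,u))\cong\R^{2}$ via $g\mapsto(\sigma_{1}(g),\sigma_{2}(g))=\sigma(g)$, and density of $\sigma(G)$ in $\R^{2}$ is exactly the conclusion of that theorem. (Noncyclicity is automatic since $\sigma(G)\supseteq\Z^{2}$ has rank two.) You should replace your contradiction argument by an appeal to this theorem.
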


The only non-trivial parts are the description given of $G^{+}$ and the 
fact that $\sigma(G)$ is dense.
The former follows easily from the fact that, in a simple dimension group
having an  order unit,
an element $g$ is positive if and only if $g=0$ or 
$\tau(g) > 0$, for every state $\tau$ on $(G, G^{+})$.
 In our case, the latter 
is equivalent to $\sigma_{1}(g), \sigma_{2}(g) > 0$.
As for the fact that  the image of $\sigma$ is dense, the values 
$\sigma_{1}(g)$ and $\sigma_{2}(g)$ uniquely determine $\tau(g)$, for
any state $\tau$. In a simple dimension group, the set of affine 
functions of the form $\tau \rightarrow \tau(g), g \in G$ are dense
in the space of affine functions on the space of states
 (see Theorem 4.4  \cite{Eff:CBMS}
 or Chapter 8 of \cite{Good:book}).

We remark that the topic of dimension groups
having two extremal states began with the work of 
Fack and Marechal  \cite{FM:2states}.  The distinction with our work
is that we assume the group has elements which lie
on the 'boundary' of the positive cone, namely, $u^{1}, u^{2}$.
This hypothesis is a strong restriction on the  group and these
form a very special subclass.
Let us further remark that if $G$ is a dense subgroup of
$\R^{2}$ and $P$ is an open positive cone in $\R^{2}$ bounded
by two rays that both meet $G$, then a simple linear transformation
shows that our results can be applied to $(G, G^{+}= G\cap P)$, using
$\sigma$ to be the inclusion map.

 If $v$ is a vector in $\R^{I}$ and $r$ is a real number, 
 we write $v \leq r$ (or $v \geq r$) if $v(i) \leq r$
  for all $1 \leq i \leq I$ (or $v(i) \geq r$
  for all $1 \leq i \leq I$, respectively). For any partially 
  ordered set with elements $a,b,c,d$, we will write 
  $a \leq b, c \leq d$ to mean $a \leq b \leq d$ and $a \leq c \leq d$.

\begin{thm}
\label{intro:30}
Suppose we are given
\begin{enumerate}
\item a sequence of  integers $J_{n}, n \geq 1$,
\item $u^{1,1}_{n}, u^{1,2}_{n}, u^{2,1}_{n}, u_{n}^{2,2}$ in $\Z^{J_{n}}$,
for all $n \geq 1$,
\item strictly positive integer $J_{n+1} \times J_{n}$-matrices, 
$A^{1,1}_{n}, A^{1,2}_{n}, A^{2,1}_{n}, A_{n}^{2,2}$, for all $n \geq 1$,
satisfying 
\[
 \left[ \begin{matrix}
 A_{n}^{1,1} & A_{n}^{1,2} \\A_{n}^{2,1} & A_{n}^{2,2} \end{matrix} \right]
 \left[ \begin{matrix}
 u_{n}^{i,1} \\ u_{n}^{i,2}  \end{matrix} \right]
 = \left[ \begin{matrix}
 u_{n+1}^{i,1} \\ u_{n+1}^{i,2}  \end{matrix} \right],
 \]
 for $i=1,2$.
Let $(G, G^{+})$ be  the dimension group
which is the inductive limit
\[
\xymatrix{
 \Z^{J_{1} } \oplus    \Z^{J_{1} } \ar[rrr]<2pt>^{\left[ \begin{matrix}
 A_{1}^{1,1} & A_{1}^{1,2} \\
 A_{1}^{2,1} & A_{1}^{2,2} \end{matrix} \right]} & & &
 \Z^{J_{2} } \oplus    \Z^{J_{2} } 
 \ar[rrr]^{\left[ \begin{matrix}
 A_{2}^{1,1} & A_{2}^{1,2} \\
 A_{2}^{2,1} & A_{2}^{2,2} \end{matrix} \right]} & & &
  \cdots  &   } 
\]
with $\eta_{n}: \Z^{J_{n}} \oplus \Z^{J_{n}} \rightarrow G$, 
the natural inclusion, for each $n \geq 1$ and let
$u^{i} = \eta_{n}(u_{n}^{i,1}, u_{n}^{i,2})$, for $i=1,2$. 

 Suppose that there are 
 a decreasing sequence of rational numbers $2 > r_{n} > 1, n \geq 1$,
  with limit $1$
 and strictly 
 increasing sequences of positive integers $K_{n}$ and 
 $ l_{n}$, both at least $ 4$,
 such that
 \item 
 \begin{eqnarray*}
 r_{n}^{-1} K_{n} & \leq u_{n}^{1,1}, u_{n}^{2,2}
  \leq & r_{n}K_{n}, \\
  -r_{n}K_{n} & \leq l_{n}u_{n}^{1,2}, l_{n}u_{n}^{2,1}
   \leq & - r_{n}^{-1} K_{n},
 \end{eqnarray*}
 \item 
 \begin{eqnarray*}
 r_{n}^{-1} A_{n}^{1,1}(j',j) & \leq l_{n} A_{n}^{2,1}(j'',j)
  \leq & r_{n} A_{n}^{1,1}(j',j), 
 \\
 r_{n}^{-1} A_{n}^{2,2}(j',j) & \leq l_{n} A_{n}^{1,2}(j'',j)
  \leq & r_{n} A_{n}^{2,2}(j',j), 
 \end{eqnarray*}
 for all $1 \leq j',j'' \leq J_{n+1}, 1 \leq j \leq J_{n}$. 
  \end{enumerate}
  
Then $(G, G^{+})$ is a 
simple dimension group with exactly two extremal
 states $\sigma_{1}, \sigma_{2}$ satisfying
 $\sigma_{1}(u^{1}) = \sigma_{2}(u^{2})=1$ and 
 $\sigma_{2}(u^{1}) = \sigma_{1}(u^{2})=0$.
\end{thm}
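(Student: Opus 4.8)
The plan is to exhibit two explicit states on $(G, G^{+})$ coming from the data, show they are extremal, show there are no others, and deduce simplicity and the normalization on $u^1, u^2$. The natural candidates are built from the hypothesis that $u_n^{i,j}$ are ``almost constant'' vectors of size comparable to $K_n$. Concretely, for $x = \eta_n(a,b)$ with $a,b \in \Z^{J_n}$, I would like to define $\sigma_1(x)$ and $\sigma_2(x)$ as limits of suitably normalized coordinate-sums of the images of $(a,b)$ deep in the diagram. Since the bottom-left and top-right blocks $A_n^{2,1}, A_n^{1,2}$ are (after multiplying by $l_n \to \infty$) comparable to the diagonal blocks, the key quantitative point is that passing through one level of the diagram the first ``copy'' of $\Z^{J_n}$ feeds almost entirely into the first copy of $\Z^{J_{n+1}}$ up to an error governed by $l_n^{-1}$, which is summable-into-smallness because $l_n$ is strictly increasing. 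I would make this precise by estimating, for a unit basis vector $e$ in the first copy at level $n$, the ratio of the total mass landing in the first copy versus the second copy at level $n+1$, and iterating.

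Here are the steps in order. First I would normalize: since $A_n^{1,1}$ is strictly positive, Perron-Frobenius-type comparisons let me bound the ratio of any two entries of $A_n^{1,1}(\cdot,\cdot)$ applied to a fixed vector, but more usefully I would work directly with the order-unit $u = u^1 + u^2$, which is an order unit by Theorem \ref{intro:10} once I know the two states exist and separate appropriately; to bootstrap, I first show $u$ is an order unit directly from hypothesis (5), using strict positivity of all four blocks and the lower bound $r_n^{-1}K_n \le u_n^{i,i}$. Second, I define, for each $n$ and each $x = \eta_n(a,b)$, the quantity $\sigma_1^{(m)}(x)$ to be $K_m^{-1}$ times the average coordinate of the first component of the image of $(a,b)$ at level $m > n$ (averaging over the $J_m$ coordinates, or better, comparing against $u_m^{1,1}$ coordinatewise). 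Using hypotheses (5) and (6) I would show this sequence is Cauchy in $m$: the errors at each stage are controlled by $r_m - 1 \to 0$ and by $l_m^{-1} \to 0$, which is exactly what the hypotheses $2 > r_m > 1 \searrow 1$ and $l_m \nearrow \infty$ are for. Call the limit $\sigma_1(x)$; symmetrically define $\sigma_2$ using the second component. Third, I check $\sigma_1, \sigma_2$ are well-defined on $G$ (compatible with the connecting maps, by construction), are group homomorphisms, are positive (an element of $G^+$ is represented by a vector with nonnegative entries eventually, forcing the normalized averages to be $\ge 0$), and satisfy $\sigma_i(u^j) = \delta_{ij}$ directly from $r_n^{-1}K_n \le u_n^{i,i} \le r_n K_n$ and the comparable smallness of the off-diagonal $u_n^{i,j}$ (which have an extra $l_n^{-1}$ and hence vanish in the limit).

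Fourth, I prove these are the only extremal states. Here I would invoke that, after telescoping, all connecting matrices being strictly positive makes $(G,G^+)$ simple (the standard fact recalled in the introduction applied to the $2J_n \times 2J_n$ block matrices, which are strictly positive since each block is), hence $u$ is an order unit and the state space is a nonempty Choquet simplex whose extreme points are the extremal states. To pin the extreme points down to two, I would show any state $\tau$ is determined by the two numbers $\tau(u^1), \tau(u^2)$: given $x = \eta_n(a,b)$, I sandwich $x$ between integer combinations of $u^1, u^2$ in the order of $G$ using hypothesis (5) (each basis vector in copy $i$ at level $n$, after one step, is bounded above and below by rational multiples of $u_{n+1}^{i,i}$ plus controlled off-diagonal terms), and let $n \to \infty$; the sandwich tightens because $r_n \to 1$. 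Thus $\tau(x)$ is squeezed to a value depending only on $\tau(u^1),\tau(u^2)$, so the state space is affinely embedded in $\R^2$; combined with $\sigma_1, \sigma_2$ being states with $\sigma_i(u^j)=\delta_{ij}$ and with the positivity constraint $\tau(u^i) \ge 0$, the state space is exactly the segment between $\sigma_1$ and $\sigma_2$, giving precisely two extreme points.

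The main obstacle I anticipate is the Cauchy/convergence estimate in step two together with the sandwiching in step four: one must carefully track how a coordinatewise comparison $v \le r w$ at level $n$ propagates to level $n+1$ under the block matrix, showing the distortion multiplies by at most $r_n$ (from hypothesis (6)) while the ``leakage'' of the first component into the second is at most of order $l_n^{-1}$ relative to $K_n$. Because $\prod r_n$ need not converge, the right bookkeeping is to compare everything to the vectors $u_m^{i,j}$ rather than to constants, using that these are preserved by the connecting maps by hypothesis (3); then the only genuine errors are the off-diagonal leakages, which are summably small because $l_n \ge 4$ is strictly increasing, and the multiplicative $r_n$-distortions, which telescope into $\lim r_n = 1$ when applied to differences. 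Handling these two error sources simultaneously and uniformly in the vertex index $j$ is the crux; everything else is bookkeeping with the definitions of dimension group, state, and order unit recalled above.
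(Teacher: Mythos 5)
Your outline follows essentially the same route as the paper's proof. The heart of both arguments is the sandwich estimate (the paper's Lemma \ref{suff:20}): for a positive element represented at level $n$ by $(x^{1},x^{2}) \leq Cu_{n}$, its image at level $n+1$ is squeezed between $r_{n}^{-2}(su_{n+1}^{1}+tu_{n+1}^{2})$ and $r_{n}^{2}(\bar{s}u_{n+1}^{1}+\bar{t}u_{n+1}^{2})$ with $\bar{s}-s,\ \bar{t}-t \rightarrow 0$, whence every state $\tau$ satisfies $\tau = \tau(u^{1})\sigma_{1} + \tau(u^{2})\sigma_{2}$ (Lemma \ref{suff:30}). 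Your instinct that one must compare against the vectors $u_{m}^{i,j}$ rather than against constants, so that the $r_{n}$-distortions do not accumulate into a product $\prod_{n} r_{n}$, is exactly the paper's bookkeeping: each application of the sandwich starts afresh at level $n$ and only a single factor $r_{n}^{\pm 2}$ appears. The one genuine divergence is that you propose to build $\sigma_{1},\sigma_{2}$ as explicit limits of normalized coordinate data, whereas the paper obtains them abstractly: since $u_{n}^{1,2}<0$ for all $n$, the element $u^{1}$ is not positive, so some state is $\leq 0$ on it, and combined with the fact that every state is $\geq 0$ on it, that state must vanish on $u^{1}$. Your construction would work, but the Cauchy estimate it requires is precisely the content of Lemma \ref{suff:20}, so nothing is saved.

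There is one step you assert without justification, and it is not automatic: that an arbitrary state $\tau$ satisfies $\tau(u^{1}),\tau(u^{2}) \geq 0$. This is genuinely needed in your step four -- without it, the affine image of the state space in $\R^{2}$ could a priori be a segment strictly containing $[\sigma_{1},\sigma_{2}]$, in which case $\sigma_{1},\sigma_{2}$ would fail to be extreme -- and it does not follow from positivity of $\tau$ alone, because $u^{1}$ and $u^{2}$ are not elements of $G^{+}$. The paper proves it (Lemma \ref{suff:10}, part 3) by checking from condition 4 of Theorem \ref{intro:30} together with $l_{n}\geq 4 > r_{n}^{2}$ that $l_{n}u_{n}^{1}+4u_{n}^{2} \geq 0$ in $\Z^{J_{n}}\oplus\Z^{J_{n}}$, so that $l_{n}\tau(u^{1})+4\tau(u^{2}) \geq 0$ for every $n$; since $l_{n}$ is unbounded this forces $\tau(u^{1})\geq 0$, and symmetrically $\tau(u^{2})\geq 0$. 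You should insert this (or an equivalent) into your argument; with that addition your plan goes through and coincides in substance with Lemmas \ref{suff:10}--\ref{suff:30} of the paper.
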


Our main goal is the following converse of this last theorem.

\begin{thm}
\label{intro:40}
Let $(G, G^{+})$ be a simple dimension group with elements \newline
$u^{1}, u^{2}$
and exactly two extremal states, $\sigma_{1}, \sigma_{2}$, with 
$\sigma_{1}(u^{1}) = \sigma_{2}(u^{2}) =1 $ and 
$\sigma_{2}(u^{1}) = \sigma_{1}(u^{2}) =0$.
Let $a_{n}, n = 1, 2, \ldots$ be a  sequence of positive integers.

Then there exist:
\begin{enumerate}
\item a sequence of positive integers $J_{n}, n \geq 1$,
\item $u^{1,1}_{n}, u^{1,2}_{n}, u^{2,1}_{n}, u_{n}^{2,2}$ in $\Z^{J_{n}}$,
for all $n \geq 1$,
\item strictly 
positive integer $J_{n+1} \times J_{n}$-matrices, 
$A^{1,1}_{n}, A^{1,2}_{n}, A^{2,1}_{n}, A_{n}^{2,2}$, for all $n \geq 1$,
such that the inductive limit
\[
\xymatrix{
 \Z^{J_{1} } \oplus    \Z^{J_{1} } \ar[rrr]<2pt>^{\left[ \begin{matrix}
 A_{1}^{1,1} & A_{1}^{1,2} \\
 A_{1}^{2,1} & A_{1}^{2,2} \end{matrix} \right]} & &  &
 \Z^{J_{2} } \oplus    \Z^{J_{2} } 
 \ar[rrr]^{\left[ \begin{matrix}
 A_{2}^{1,1} & A_{2}^{1,2} \\
 A_{2}^{2,1} & A_{2}^{2,2} \end{matrix} \right]} & & & 
  \cdots    } 
\]
is isomorphic to $(G, G^{+})$ as ordered abelian groups.
If we let $\eta_{n} $
 denote the resulting natural
map from $\Z^{J_{n} } \oplus    \Z^{J_{n} }$ to $G$, then 
$\eta_{n}(u_{n}^{i,1}, u_{n}^{i,2}) 
=u^{i}$ and 
\[
 \left[ \begin{matrix}
 A_{n}^{1,1} & A_{n}^{1,2} \\
 A_{n}^{2,1} & A_{n}^{2,2} \end{matrix} \right]
 \left[ \begin{matrix}
 u_{n}^{i,1} \\ u_{n}^{i,2}  \end{matrix} \right]
 = \left[ \begin{matrix}
 u_{n+1}^{i,1} \\ u_{n+1}^{i,2}  \end{matrix} \right]
 \]
 for $i=1,2$.
 
 In addition, there exist, for all $n \geq 1 $, a 
 decreasing sequence of rational numbers 
 $2 > r_{n} > 1, n \geq 1$, with limit $1$ and 
 increasing sequences of positive 
integers $K_{n}, l_{n}, M_{n}, n \geq 1$, all at least $ 4$, 
such that

 \item 
 \begin{eqnarray*}
 r_{n}^{-1} K_{n} & \leq u_{n}^{1,1}, u_{n}^{2,2} 
 \leq & r_{n}K_{n}, \\
  -r_{n} K_{n} & \leq l_{n}u_{n}^{1,2}, l_{n}u_{n}^{2,1} 
  \leq & - r_{n}^{-1}  K_{n}
 \end{eqnarray*}
 \item 
 \begin{eqnarray*} a_{n} &  \leq  
  A_{n}^{1,2}(j',j), &  
 A_{n}^{2,1}(j',j),    \\
 r_{n}^{-1} A_{n}^{1,1}(j',j) & \leq 
 l_{n} A_{n}^{2,1}(j'',j)
  \leq & r_{n} A_{n}^{1,1}(j',j), 
 \\
 r_{n}^{-1} A_{n}^{2,2}(j',j) & \leq l_{n} A_{n}^{1,2}(j'',j)
  \leq & r_{n} A_{n}^{2,2}(j',j), 
 \end{eqnarray*}
 for all $1 \leq j',j'' \leq J_{n+1}, 1 \leq j \leq J_{n}$,
 \item 
  \begin{eqnarray*}
 r_{n}^{-1} M_{n} & \leq \sum_{j=1}^{J_{n}} A_{n}^{1,1}(j',j),
 \sum_{j=1}^{J_{n}} A_{n}^{2,2}(j',j)
  \leq & r_{n} M_{n},  
 \end{eqnarray*}
 for all $1 \leq j' \leq J_{n+1}$.
\end{enumerate}
\end{thm}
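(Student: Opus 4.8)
The plan is to fix an arbitrary Bratteli diagram for $(G,G^{+})$, build the desired ``doubled'' diagram at the same time, connect the two by a positivity-preserving intertwining, and transport the order-unit data along it; the quantitative conditions (4)--(6) are then arranged by exploiting the freedom to telescope the given diagram and to enlarge the vertex sets. To begin, write $\sigma=(\sigma_{1},\sigma_{2}):G\to\R^{2}$. By Theorem~\ref{intro:20} we have $\Z^{2}\subseteq\sigma(G)$, $\sigma(G)$ dense in $\R^{2}$, and $G^{+}=\{0\}\cup\sigma^{-1}((0,\infty)\times(0,\infty))$, so $u=u^{1}+u^{2}$ is an order unit and an element of $G$ is strictly positive exactly when $\sigma_{1}$ and $\sigma_{2}$ are both positive on it. Choose any Bratteli diagram $(\Z^{I_{n}},B_{n})$ with inductive limit $(G,G^{+})$ and canonical maps $\gamma_{n}$; by simplicity and telescoping we may assume every $B_{n}$ is strictly positive, and, telescoping further, we fix $\tilde u^{i}_{n}\in\Z^{I_{n}}$ with $\gamma_{n}(\tilde u^{i}_{n})=u^{i}$ and $B_{n}\tilde u^{i}_{n}=\tilde u^{i}_{n+1}$ for $i=1,2$.

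The reduction is as follows. Suppose we can choose recursively cut levels $m_{1}<m_{2}<\cdots$, positive integers $J_{n}$, strictly positive integer block matrices
\[
C_{n}=\left[\begin{matrix} A_{n}^{1,1} & A_{n}^{1,2} \\ A_{n}^{2,1} & A_{n}^{2,2}\end{matrix}\right]
\]
of size $2J_{n+1}\times 2J_{n}$, and non-negative integer matrices $\phi_{n}:\Z^{I_{m_{n}}}\to\Z^{J_{n}}\oplus\Z^{J_{n}}$ and $\psi_{n}:\Z^{J_{n}}\oplus\Z^{J_{n}}\to\Z^{I_{m_{n+1}}}$ with
\[
\psi_{n}\phi_{n}=B_{m_{n+1}-1}\cdots B_{m_{n}}\quad\text{and}\quad\phi_{n+1}\psi_{n}=C_{n}.
\]
Setting $(u_{n}^{i,1},u_{n}^{i,2}):=\phi_{n}(\tilde u^{i}_{m_{n}})$, the compatibility of the $u_{n}^{i,j}$ under $C_{n}$ is then automatic, and the standard intertwining argument for inductive limits in the category of ordered abelian groups (all the maps here being positive) identifies $\varinjlim(\Z^{J_{n}}\oplus\Z^{J_{n}},C_{n})$ with the telescoped diagram $\varinjlim(\Z^{I_{m_{n}}},B_{m_{n+1}-1}\cdots B_{m_{n}})=(G,G^{+})$ as ordered abelian groups, carrying $\eta_{n}(u_{n}^{i,1},u_{n}^{i,2})$ to $u^{i}$. (If one prefers, once conditions (4)--(6) hold one may instead appeal to Theorem~\ref{intro:30}, which shows that the limit of the $C$-diagram is already a simple dimension group with exactly two extremal states taking the prescribed values on $u^{1},u^{2}$, and then note that a bijective positive homomorphism between two such groups carrying the pair $u^{1},u^{2}$ to the pair $u^{1},u^{2}$ is automatically an order isomorphism, by the strict-positivity criterion recalled above.) Thus the whole theorem reduces to running the recursion so that the two displayed factorizations hold together with conditions (4)--(6).

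The construction of the factorizations is the core of the proof, and the point is that at each stage there is enormous room: we may take $m_{n+1}$ so large that $B_{m_{n+1}-1}\cdots B_{m_{n}}$ has arbitrarily large entries, and $J_{n}$ as large as we wish. I would choose $\psi_{n}$ so that the two copy-components of each column $\psi_{n}(e^{(j)}_{k})$ are near-uniform vectors in $\Z^{I_{m_{n+1}}}_{\ge 0}$ whose total sizes differ, between $j=1$ and $j=2$, by a factor close to the intended $l_{n}$, and choose the doubling maps $\phi_{n+1}$ so that each of their two copy-output blocks has near-equal rows. Then $C_{n}=\phi_{n+1}\psi_{n}$ has strictly positive blocks, with $A_{n}^{1,1}$ and $A_{n}^{2,2}$ row-balanced (yielding the common bound $M_{n}$ of (6)), with $A_{n}^{2,1}$ and $A_{n}^{1,2}$ respectively a factor $\approx 1/l_{n}$ smaller than $A_{n}^{1,1}$ and $A_{n}^{2,2}$ (the ratio estimates in (5)), and with all off-diagonal entries at least $a_{n}$ once the overall scale is large enough (the first line of (5)). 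Similarly, using that $\sigma(G)$ is dense and $G$ simple, one spreads $u^{1}$ over the doubled vertex set at level $n$ so that $u_{n}^{1,1}$ is close to the constant vector $K_{n}\mathbf{1}$ and $u_{n}^{1,2}$ close to $-(K_{n}/l_{n})\mathbf{1}$, and symmetrically for $u^{2}$, which is (4); the unavoidable correction terms (forced because $\phi_{n}$ is already fixed and $\psi_{n}$ must satisfy $\psi_{n}\phi_{n}=B_{m_{n+1}-1}\cdots B_{m_{n}}$) are exactly what the factors $r_{n}$ absorb, $r_{n}$ being driven to $1$ as $K_{n},l_{n},M_{n}\to\infty$. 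The main obstacle is precisely this step: running the recursion so that all of (4)--(6) hold simultaneously --- reconciling ``$A_{n}^{1,1}$ and $A_{n}^{2,2}$ both large and row-balanced with the same $M_{n}$'' with ``each off-diagonal block a factor $\approx 1/l_{n}$ below the corresponding diagonal block yet with entries $\ge a_{n}$'' with ``$u^{i}$ carrying the prescribed near-constant $\pm$ profile'' --- which forces one to choose $m_{n}$, $J_{n}$, $K_{n}$, $l_{n}$, $M_{n}$ and $r_{n}$ in the right order and with enough slack, a bookkeeping to be isolated in a separate lemma. Everything else is routine.
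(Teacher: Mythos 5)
Your reduction to an intertwining $\psi_{n}\phi_{n}=B_{m_{n+1}-1}\cdots B_{m_{n}}$, $\phi_{n+1}\psi_{n}=C_{n}$ is exactly the architecture the paper uses (its $\phi_{n}$ is a stacked pair $\left[\begin{smallmatrix}B^{1}\\ B^{2}\end{smallmatrix}\right]$ and its $\psi_{n}$ a concatenated pair $[\,C^{1}\ C^{2}\,]$, with $C^{1}B^{1}+C^{2}B^{2}=A_{\tilde n-1}\cdots A_{n}$), and your remark that one can finish either by the standard intertwining argument or by quoting Theorem \ref{intro:30} plus the state criterion for positivity is fine. But the step you defer to ``a separate lemma'' and call bookkeeping is where all of the mathematics lives, and your sketch of it does not work as stated: once $m_{n+1}$ and $\phi_{n}$ are fixed, you cannot freely prescribe the columns of $\psi_{n}$ to be ``near-uniform vectors whose total sizes differ by a factor close to $l_{n}$'' \emph{and} have $\psi_{n}\phi_{n}$ equal the already-determined product $B_{m_{n+1}-1}\cdots B_{m_{n}}$; that system is overdetermined, and no mechanism is offered for solving it.

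The missing idea is a decomposition of the positive cone itself. The paper fixes $l$ and introduces the open cones $C(l,1)$, $C(l,2)$ around the rays through $(l,1)$ and $(1,l)$, and the corresponding orderings $G(l,i)=\{0\}\cup\sigma^{-1}(C(l,i))$, each of which makes $G$ a simple dimension group again with order unit $lu^{1}+u^{2}$ (resp.\ $u^{1}+lu^{2}$). Using the projections onto $\R(l,1)$ and $\R(1,l)$ and the density of $\sigma(G)$, it splits the canonical map as $\gamma=\gamma^{1}+\gamma^{2}$ with $\gamma^{i}$ positive into $(G,G(l,i))$ (Proposition \ref{proof:40}); it then factors each $\gamma^{i}$ through a simplicial group \emph{using the dimension-group structure of $(G,G(l,i))$}, together with a row-duplication/division trick (Propositions \ref{proof:10}, \ref{proof:20}, \ref{proof:50}), which is what forces every column of $B^{i}$ to point into the narrow cone $C(l,i)$ and hence yields simultaneously the ratio bounds of condition 5, the lower bound $a_{n}$, the near-constant $\pm$ profile of condition 4, and the row sums of condition 6. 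Only after all this is the level $\tilde n$ chosen large enough that the finitely many identities and positivity statements descend to $\Z^{I_{\tilde n}}$, producing $C^{1},C^{2}$. Without the cone decomposition and the factorization through the auxiliary ordered groups $(G,G(l,i))$, the simultaneous satisfaction of conditions 4--6 is not routine, so the proposal as written has a genuine gap at its central step.
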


Notice that this statement is actually
stronger  than the converse of Theorem \ref{intro:30} because
of condition 6 and the conclusion involving the sequence 
$a_{n}$.
 This is something we will need in our application.

We believe this result is of interest in its own right, but we 
would like to explain our main application, which is 
to topological dynamical systems and will appear 
in a subsequent paper, now in preparation. 

 By now, the connection
between dimension groups and dynamical systems is well-established
(for example, see Durand and Perrin \cite{DP:book}) and that connection
goes through Bratteli diagrams. By the late 1970's,
several people were working along these lines, including
Stratila and Voiculescu \cite{SV:Uinfin}, Krieger \cite{Kri:paper}
 and Renault 
\cite{Ren:LNM}. But
Anatoly Vershik's notion of
an 'adic' transformation on the path space of a Bratteli diagram
(\cite{Ver:adic} and \cite{Ver:Mark})
was a crucial idea which eventually led to Herman, Putnam and Skau
providing the Bratteli-Vershik model for minimal 
actions of the group of integers on the Cantor set \cite{HPS:model}. The 
key point is that the Bratteli diagram at once describes both the 
space with the action and also  a cohomological invariant
in the form of the dimension group. This, in turn, led to 
a classification of such systems up to orbit equivalence in terms
of an invariant closely linked with the dimension group
by Giordano, Putnam and Skau \cite{GPS:orb}.

While this classification up to orbit equivalence 
has been extended to include minimal free
actions of $\Z^{d}$ by Giordano, Matui, Putnam and Skau,
 first for $d  =2$ \cite{GMPS:orb2} and later for 
$d > 2$ \cite{GMPS:orbd}, the model, even for actions of 
$\Z^{2}$, has not. There have been interesting 
attempts by Vershik and Lodkin \cite{VerL:Zd} who developed an analogue
of adic transformations and by Frank and Sadun 
('fusion rules') \cite{FS:fusion}, but
none carries the cohomological data in a good way. 
 Our goal is to provide a model, at least in the case $d=2$,
  for minimal free actions of
 $\Z^{2}$ on the Cantor set, which take as their input
 cohomological data. In other words, to find such dynamical 
 systems with  prescribed cohomology. We remark that, 
 in the case $d=1$, the model is complete in the sense that every 
 minimal action of the integers is realized. We do not aim
 for such a general result. Indeed, there are 
 examples of Clark and Sadun  \cite{CS:cohom} whose cohomology do 
 not satisfy the conditions we need for our model.
 
 The cohomology, for a given action $\varphi$ of $\Z^{d}$ on 
 a Cantor set $X$, is the group cohomology of the acting group, $\Z^{d}$,
 with coefficients in $C(X, \Z)$, the continuous 
 integer-valued functions on $X$. We will not go into much detail
 but we denote the result by
  $H^{*}(X, \varphi)$. Under our hypotheses, we always have 
  $H^{0}(X, \varphi)  \cong \Z$, while  $H^{k}(X, \varphi) =0$ for
  $k >d$. In addition,  a quotient of 
  $H^{d}(X, \varphi)  $ is the invariant for orbit equivalence. 
  Of course, when $d=1$, that is the end of the story. For $d > 1$,
  it is our opinion that the group
   $H^{1}(X, \varphi)  $ is very important. Some evidence for
   this is given in the case of odometer actions of $\Z^{d}$, as described 
   by Giordano, Putnam and Skau \cite{GPS:odom}.
   
    It is a simple
   matter to see that $H^{1}(X, \varphi)  $  is countable and 
   torsion-free (see Theorem 6.1 of \cite{GPS:odom}).
   A $1$-cocycle 
   is represented by a continuous
    function $\theta: X \times \Z^{d} \rightarrow \Z$. Then if 
    $\mu$ is any $\varphi$-invariant measure on $X$, by simply 
    integrating out the $x$ variable, we obtain a group homomorphism
    from   $H^{1}(X, \varphi)  $ to $Hom(\Z^{d}, \R)$, which we
    can identify with $\R^{d}$. Moreover, the range contains
    $\Z^{d}$. Our goal then is to begin with a countable torsion-free
    abelian group $G$ along with a homomorphism to $\R^{2}$
    whose image contains $\Z^{2}$. We add 
    the hypothesis that the range is dense and construct a 
    minimal free action, $\varphi$, of $\Z^{2}$ on a Cantor set $X$
   with $H^{1}(X, \varphi) \cong G$. We view this 
   as the Bratteli-Vershik model for $\Z^{2}$-actions. 
   The current paper 
   begins with an abstract dimension group of a specific form 
   as above and 
   produces the combinatorial data in the form of a Bratteli diagram
   as the first step in the construction of the dynamics.

The authors are grateful to the referee for a thorough
 reading of the paper and numerous helpful corrections and 
 comments. Part of this research was conducted during a Research in Pairs
 visit to the Mathematisches Forschungsinstitut Oberwolfach and during 
 a Research in Teams visit to the Banff International Research Station.
The authors gratefully acknowledge these supports.

\section{Sufficiency}

In this section, we prove Theorem \ref{intro:30}. We assume 
throughout that we have 
$A_{n}^{i,j}, u_{n}^{i,j}, r_{n}, K_{n}, l_{n}$,
 for $n \geq 1, i,j=1,2$
as in the statement of \ref{intro:30}. We also let 
$u_{n}^{i} = (u_{n}^{i,1}, u_{n}^{i,2})$,  and 
$u_{n} = u_{n}^{1}+ u_{n}^{2}$,
for all $n \geq 1$ and 
$i=1,2$.

Let us quickly observe that 
$l_{n} \geq 4  > r_{n}^{2}$, for all $n \geq 1$.

We first want to show that  $(G, G^{+})$ has at least two distinct
states which satisfy the desired conditions.

\begin{lemma}
\label{suff:10}
\begin{enumerate}
\item 
$(G, G^{+})$ is a simple dimension group.
\item 
$u= u^{1} + u^{2}$ is an order unit for $(G, G^{+})$.
\item 
For any state $\sigma$ on $G$, $0 \leq \sigma(u^{1}), \sigma(u^{2})$.
\item 
 There exist states $\sigma_{1}, \sigma_{2}$ on $(G, G^{+})$ such that 
 \begin{eqnarray*}
\sigma_{1}(u^{1})  & = \sigma_{2}(u^{2}) & = 1, \\
\sigma_{1}(u^{2})  & = \sigma_{2}(u^{1}) & = 0.
\end{eqnarray*}
\end{enumerate}
\end{lemma}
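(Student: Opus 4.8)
The plan is to verify the four claims of Lemma \ref{suff:10} in order, using the inductive-limit description of $(G,G^{+})$ together with the quantitative hypotheses (4)--(6) of Theorem \ref{intro:30}. For part (1), since each connecting matrix $\left[\begin{smallmatrix} A_n^{1,1} & A_n^{1,2} \\ A_n^{2,1} & A_n^{2,2}\end{smallmatrix}\right]$ is strictly positive (all four blocks have strictly positive integer entries), the composition of any two consecutive connecting maps already has strictly positive entries, so the diagram telescopes to one with strictly positive connecting matrices. An inductive limit of $(\Z^{I},\Z^{I+})$'s along strictly positive matrices is a simple dimension group (every strictly positive element is eventually an order unit); this is standard and I would just cite the telescoping discussion and section 2.2.1 of \cite{DP:book}.

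For part (2), I would show directly that $u = u^1 + u^2$ is an order unit. Writing $u_n = u_n^1 + u_n^2 = (u_n^{1,1} + u_n^{2,1},\, u_n^{1,2} + u_n^{2,2})$, the hypotheses in condition 4 give, coordinatewise, $u_n^{1,1} + u_n^{2,1} \geq r_n^{-1}K_n - r_n^{-1}K_n/l_n = r_n^{-1}K_n(1 - l_n^{-1}) > 0$ (since $l_n \geq 4$), and similarly $u_n^{1,2} + u_n^{2,2} > 0$; so $u_n$ is a strictly positive element of $\Z^{J_n}\oplus\Z^{J_n}$, hence $u = \eta_1(u_1)$ is a strictly positive element of $G$, hence an order unit by part (1). (I need only that $u_n > 0$ for some $n$, which holds for $n=1$.) Part (3) is then immediate from general dimension-group theory: in a simple dimension group with order unit, $g \in G^{+}\setminus\{0\}$ if and only if $\tau(g) > 0$ for every state $\tau$; since $u^1, u^2 \in G^{+}$ (again because $u_n^{1}, u_n^{2}$ are already in the positive cone: $u_n^{1,1} > 0$ and $u_n^{1,2} = l_n^{-1}(l_n u_n^{1,2}) \leq -r_n^{-1}K_n l_n^{-1} < 0$ — wait, so $u^1$ is \emph{not} positive), actually $u^1$ is not positive, so for (3) I instead argue: $2u = 2u^1 + 2u^2$ and one checks $2u - u^1 = u^1 + 2u^2$ has both coordinates strictly positive for $n=1$ (using $u_1^{1,1} \geq r_1^{-1}K_1$ dominates the negative contribution from $2u_1^{2,1}$, since $l_1 \geq 4$ forces $2|u_1^{2,1}| \leq 2r_1 K_1/l_1 < r_1^{-1}K_1$ once $r_1^2 < l_1/2$, which follows from $r_n^2 < l_n$ after possibly telescoping), giving $u^1 \leq 2u$, hence $\sigma(u^1) \leq 2\sigma(u) = 2$; combined with $\sigma(u^1) \geq 0$ from a symmetric estimate showing $u^1$ itself, while not positive, satisfies $u^1 + \epsilon$-type bounds — more cleanly, $\sigma(u^1) + \sigma(u^2) = \sigma(u)$ and I bound each of $\sigma(u^1), \sigma(u^2)$ from below by showing $C u \pm u^1 \geq 0$ for a suitable constant, using the coordinate estimates of condition 4.

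The real content is part (4): producing two distinct states. Here I would follow the standard recipe for states on a simple dimension group given by a Bratteli diagram. A state is determined by a sequence of positive row vectors $\sigma^{(n)} \in \R^{J_n} \oplus \R^{J_n}$ (really a pair $(\alpha_n, \beta_n)$ of nonnegative row vectors) that are compatible with the connecting maps, i.e. $\sigma^{(n+1)} \circ \left[\begin{smallmatrix} A_n^{1,1} & A_n^{1,2} \\ A_n^{2,1} & A_n^{2,2}\end{smallmatrix}\right] = \sigma^{(n)}$, and normalized by $\sigma^{(n)}(u_n^1 + u_n^2) = 1$. To get $\sigma_1$ with $\sigma_1(u^1) = 1, \sigma_1(u^2) = 0$: the idea is that conditions 5 and 6 force the ``$(1,1)$-type'' entries $A_n^{1,1}$ to dominate and be roughly proportional (within factor $r_n \to 1$) to $l_n A_n^{2,1}$, and similarly $A_n^{2,2} \sim l_n A_n^{1,2}$; this near-proportionality, plus the row-sum control from condition 6, lets me build the compatible sequence as a limit. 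Concretely, I expect to define $\sigma_1$ as a weak-$*$ cluster point of the normalized evaluation functionals, or to use the ratio limit / column-sum normalization: set $\sigma_1^{(n)} = $ (normalized constant row vector on the first copy, essentially zero on the second), and show condition 5 makes these \emph{approximately} compatible with error controlled by $r_n - 1 \to 0$, so a genuine compatible sequence exists in the limit; the normalization $\sum_j A_n^{1,1}(j',j) \in [r_n^{-1}M_n, r_n M_n]$ being independent of $j'$ (up to $r_n$) is what makes the limiting functional well-defined on $G$. Then $\sigma_1(u^1) = \lim$ of (normalized $\alpha_n \cdot u_n^{1,1}$), and condition 4 gives $u_n^{1,1} \approx K_n$ while the ``$u^2$'' part $u_n^{1,2} \approx -K_n/l_n \to$ negligible after normalization, yielding $\sigma_1(u^1) = 1$ and $\sigma_1(u^2) = 0$; $\sigma_2$ is obtained symmetrically by swapping the roles of the two coordinates.

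The main obstacle, I expect, is making the ``approximately compatible sequence converges to an honest state'' argument rigorous: one must show that the partial evaluations $\sigma_1^{(n)}(x)$, for a fixed $x \in \Z^{J_m}\oplus\Z^{J_m}$ viewed at levels $n \geq m$, form a Cauchy sequence, with the Cauchy estimate driven by $\prod (r_k/r_k^{-1}) = \prod r_k^2$ — so I will need $\sum (r_n - 1) < \infty$, or to telescope so that the $r_n$ decrease fast enough (e.g. $r_n \leq 1 + 2^{-n}$), to guarantee the infinite product converges and the limiting functional is both well-defined and strictly positive on strictly positive elements (hence a state). Once that convergence machinery is in place, checking $\sigma_1 \neq \sigma_2$ is trivial since $\sigma_1(u^1) = 1 \neq 0 = \sigma_2(u^1)$, and checking the normalization $\sigma_i(u) = \sigma_i(u^1) + \sigma_i(u^2) = 1$ is automatic from the construction.
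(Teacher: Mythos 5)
Parts (1) and (2) are fine and match the paper. The trouble starts at part (3). Your proposed argument --- find a single constant $C$ with $Cu \pm u^{1} \geq 0$ --- can only yield $-C \leq \sigma(u^{1}) \leq C$, not $\sigma(u^{1}) \geq 0$. No single positivity inequality can do the job: since $u^{1}$ itself is \emph{not} positive (you correctly notice mid-sentence that $u_{n}^{1,2}<0$), any relation $a u^{1} + b u^{2} \geq 0$ must have $b>0$, and it gives only $\sigma(u^{1}) \geq -(b/a)\sigma(u^{2})$. The point you are missing is that the hypotheses supply a \emph{sequence} of such relations with $b/a \to 0$: condition 4 of Theorem \ref{intro:30} gives $l_{n}u_{n}^{1} + 4u_{n}^{2} \geq 0$ in $\Z^{J_{n}}\oplus\Z^{J_{n}}$ for every $n$, whence $l_{n}\sigma(u^{1}) + 4\sigma(u^{2}) \geq 0$ for all $n$, and since $l_{n}$ is unbounded this forces $\sigma(u^{1}) \geq 0$. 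That is the paper's argument, and it is the essential content of part (3); your version as written does not close.

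For part (4) you take a genuinely different and much heavier route than the paper. The paper's argument is soft: $u^{1}$ is not positive, so (by the standard characterization of positivity in a simple dimension group with order unit) some state $\sigma_{2}$ has $\sigma_{2}(u^{1}) \leq 0$; combined with part (3) this gives $\sigma_{2}(u^{1})=0$ and hence $\sigma_{2}(u^{2})=\sigma_{2}(u)=1$. Your direct construction of compatible row vectors could in principle work, but as you present it the convergence hinges on $\sum_{n}(r_{n}-1)<\infty$, which is \emph{not} among the hypotheses ($r_{n}$ is merely decreasing to $1$), and your proposed repair by telescoping is not clearly available: condition 5 constrains the individual matrices $A_{n}$ and need not be inherited by the composed matrices of a telescoped diagram. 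A weak-$*$ cluster-point argument (which you mention in passing but do not develop) would avoid the summability issue, but even then you would be rebuilding, with considerable effort, what the two-line soft argument gives for free once part (3) is in hand. I would redo part (3) along the lines above and then take the paper's route to part (4).
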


\begin{proof}
The fact that $(G, G^{+})$ is a simple dimension group
follows from the fact that the matrices are strictly positive.

From condition 4 of Theorem \ref{intro:30} and the fact that 
$l_{n} \geq 4 > r_{n}^{2}$,  $u_{n}$ is a 
strictly positive element of $\Z^{J_{n}}\oplus \Z^{J_{n}}$ for all 
$n$. The second statement follows from the fact that $(G, G^{+})$ is 
simple.

We observe that the estimates from part four of Theorem \ref{intro:30}
 show that,
 for any $n \geq 1$, 
 \begin{eqnarray*}
 l_{n}u_{n}^{1} + 4u_{n}^{2}
  & = & (l_{n}u_{n}^{1,1} + 4u_{n}^{2,1}, l_{n}u_{n}^{1,2} +4 u_{n}^{2,2})
  \\
  & \geq & (l_{n} r_{n}^{-1} K_{n} - 4r_{n}l_{n}^{-1} K_{n}, 
   - r_{n}K_{n} + 4r_{n}^{-1} K_{n} ) \\
     &  = & K_{n} ( l_{n}r_{n}(r_{n}^{-2} - 4 l_{n}^{-2}), 
   r_{n}^{-1} (4- r_{n}^{2} )) \\
     &  \geq  &  K_{n}( l_{n}r_{n}(4^{-1} - 4 \cdot 16^{-1}), 
     r_{n}^{-1}(4 - 4)) \\
       &  = & (0,0).
\end{eqnarray*} 
 It follows that if $\sigma$ is any state and $n \geq 1$,
 \[
 0 \leq \sigma \circ \eta_{n}( l_{n}u_{n}^{1} + 4u_{n}^{2} )
  = l_{n} \sigma(u^{1}) + 4\sigma(u^{2}).
  \]
 As $n $ is arbitrary and $l_{n}$ is unbounded, 
 $\sigma(u^{1}) \geq 0$. A similar argument shows
 that $\sigma(u^{2}) \geq 0$.
 
 For the last part, it is clear from the fact that
  $u_{n}^{1,2}$ is 
 negative for all $n \geq 1$ that 
 $u^{1}$ is not positive. It follows that there 
 exists a state, $\sigma_{2}$, 
 such that 
 $\sigma_{2}(u^{1}) \leq 0$. Similarly, there is a state, $\sigma_{1}$, 
 such that 
 $\sigma_{1}(u^{2}) \leq 0$. Combining this with the last statement 
implies $\sigma_{2}(u^{1}) = \sigma_{1}(u^{2}) =0$. Hence, we have
$1 = \sigma_{1}(u) = \sigma_{1}(u^{1}+u^{2})= \sigma_{1}(u^{1})$. 
Similarly, we have $\sigma_{2}(u^{2})=1$. 
\end{proof}

We now want to prove that our two states are exactly the extremal states
for the dimension group $(G, G^{+})$. We begin with a technical result.

\begin{lemma}
\label{suff:20}
Let $n \geq 1$ and   $x^{1}, x^{2}$ be in $\Z^{J_{n}}$ 
satisfying $(0,0) \leq (x^{1}, x^{2}) \leq C u_{n}$ in 
$\Z^{J_{n}} \oplus \Z^{J_{n}}$, for some positive number $C$.
 Define 
\[
  \left[ \begin{matrix}
 y^{1} \\ y^{2}  \end{matrix} \right] =
 \left[ \begin{matrix}
 A_{n}^{1,1} & A_{n}^{1,2} \\A_{n}^{2,1} & A_{n}^{2,2} \end{matrix} \right]
 \left[ \begin{matrix}
 x^{1} \\ x^{2}  \end{matrix} \right].
 \]
There exist positive real numbers $s, t, \bar{s}, \bar{t}$ such that 
\[
   r_{n}^{-2}\left( su_{n+1}^{1} + tu_{n+1}^{2}  \right)
  \leq (y^{1}, y^{2}) \leq  
  r_{n}^{2}\left(  \bar{s}u_{n+1}^{1} + \bar{t}u_{n+1}^{2}  \right)
\]
in $\R^{J_{n+1}} \oplus \R^{J_{n+1}} $ and 
\begin{eqnarray*}
 0 \leq & (r_{n+1}^{2} - r_{n+1}^{-2}l_{n+1}^{-2}) 
 \bar{s} - (r_{n+1}^{-2} - r_{n+1}^{2}l_{n+1}^{-2})s   &  
 \leq C r_{n}^{2}r_{n+1} (r_{n+1}-r_{n+1}^{-1}), \\
  0 \leq & (r_{n+1}^{2} - r_{n+1}^{-2}l_{n+1}^{-2}) 
 \bar{t} - (r_{n+1}^{-2} - r_{n+1}^{2}l_{n+1}^{-2})t   &  
 \leq C r_{n}^{2}r_{n+1}  (r_{n+1}-r_{n+1}^{-1}).
 \end{eqnarray*}
\end{lemma}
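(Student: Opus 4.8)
The plan is to start from the hypothesis $(0,0) \leq (x^1, x^2) \leq C u_n$ and to translate it into two-sided bounds in terms of $u_n^1$ and $u_n^2$. Because $u_n^{1,1}$ and $u_n^{2,2}$ are comparable to $K_n$ up to the factor $r_n$, while $u_n^{1,2}$ and $u_n^{2,1}$ are comparable to $-K_n/l_n$, the bound $(x^1,x^2) \leq C u_n = C(u_n^{1,1}+u_n^{2,1}, u_n^{1,2}+u_n^{2,2})$ will give entrywise estimates on $x^1$ and $x^2$. The natural move is to look for nonnegative reals $a,b$ (for the lower bound) and $\bar a, \bar b$ (for the upper bound) with $a u_n^1 + b u_n^2 \leq (x^1, x^2) \leq \bar a u_n^1 + \bar b u_n^2$ componentwise, exploiting that the $2\times 2$ ``matrix'' $\begin{bmatrix} u_n^{1,1} & u_n^{2,1} \\ u_n^{1,2} & u_n^{2,2}\end{bmatrix}$ is, after scaling by $K_n^{-1}$, close to $\begin{bmatrix} 1 & -l_n^{-1} \\ -l_n^{-1} & 1 \end{bmatrix}$, which is invertible with positive-ish inverse once $l_n > 1$. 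Then set $s = a$, $\bar s = \bar a$, etc., modulo the factors $r_n^{\pm 2}$ that absorb the discrepancy between $u_n^i$ and their ``ideal'' values.

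Next I would push these bounds through the matrix. Applying $\begin{bmatrix} A_n^{1,1} & A_n^{1,2} \\ A_n^{2,1} & A_n^{2,2}\end{bmatrix}$ to a vector squeezed between $r_n^{-2}$-scaled and $r_n^2$-scaled combinations of $u_n^1, u_n^2$, and using that the matrices are strictly positive (so the map is positive and monotone), gives $(y^1,y^2)$ squeezed between the same scalings applied to $\begin{bmatrix} A_n^{i,j}\end{bmatrix} u_n^1$ and $\begin{bmatrix} A_n^{i,j}\end{bmatrix} u_n^2$. But by the compatibility relation in hypothesis 3 of Theorem \ref{intro:30}, $\begin{bmatrix} A_n^{i,j}\end{bmatrix} u_n^k = u_{n+1}^k$ exactly. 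So the bounds on $(y^1,y^2)$ in terms of $u_{n+1}^1$ and $u_{n+1}^2$ fall out immediately with the \emph{same} coefficients $s,t,\bar s,\bar t$ (up to the $r_n^{\pm 2}$ factors already present), which is the first displayed inequality.

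The real work is the second pair of displayed inequalities, which control the ``spread'' $\bar s - (\text{stuff})\, s$ and likewise for $t$. Here I would go back to how $s,t$ and $\bar s, \bar t$ were produced from $x^1,x^2$: they are, roughly, $K_n^{-1}$ times solutions of the near-diagonal $2\times 2$ system above, so $\bar s - s$ and $\bar t - t$ are governed by (i) the slack in the original inequality $(x^1,x^2) \leq C u_n$ — which contributes the factor $C$ — and (ii) the deviation of $u_n^{i,j}/K_n$ from its ideal value, which is controlled by $r_n - r_n^{-1}$. The combinatorial bounds in hypothesis 5 relating $A_n^{1,1}$ to $l_n A_n^{2,1}$ and $A_n^{2,2}$ to $l_n A_n^{1,2}$ (again up to $r_n^{\pm 1}$) are what let me transfer the spread estimate from level $n$ to a clean statement at level $n+1$ with the coefficients $(r_{n+1}^2 - r_{n+1}^{-2}l_{n+1}^{-2})$ and $(r_{n+1}^{-2} - r_{n+1}^2 l_{n+1}^{-2})$ appearing. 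I expect the main obstacle to be precisely the bookkeeping here: choosing $s,t,\bar s,\bar t$ so that simultaneously the sandwich inequality holds \emph{and} the difference has the stated upper bound $C r_n^2 r_{n+1}(r_{n+1}-r_{n+1}^{-1})$; this is a matter of solving the $2\times 2$ system carefully, tracking which $r$-powers and which $l$-factors attach to which term, and verifying the algebraic identity that makes the coefficient $(r_{n+1}^2 - r_{n+1}^{-2}l_{n+1}^{-2})$ emerge rather than something messier. The nonnegativity of the middle quantity should follow from $l_{n+1} > r_{n+1}^2$ (noted just before Lemma \ref{suff:10}) together with $\bar s \geq s \geq 0$.
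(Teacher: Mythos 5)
Your first step is where the argument breaks. You propose to sandwich $(x^{1},x^{2})$ at level $n$, finding $a,b,\bar a,\bar b\ge 0$ with $au_{n}^{1}+bu_{n}^{2}\le (x^{1},x^{2})\le \bar a u_{n}^{1}+\bar b u_{n}^{2}$, and then push this through the matrix using $Au_{n}^{k}=u_{n+1}^{k}$. Such a sandwich exists, but its spread $\bar a-a$, $\bar b-b$ cannot in general be made small: an admissible $(x^{1},x^{2})$ with $0\le (x^{1},x^{2})\le Cu_{n}$ can have $x^{1}(j)=0$ at one coordinate and $x^{1}(j')$ comparable to $C(u_{n}^{1,1}+u_{n}^{2,1})(j')$ at another, whereas $au_{n}^{1,1}(j)+bu_{n}^{2,1}(j)\approx (a-bl_{n}^{-1})K_{n}$ is essentially constant in $j$ by hypothesis 4. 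Matching both coordinates forces $\bar a-a$ to be of order $C$, not of order $C(r_{n+1}-r_{n+1}^{-1})$, so the second displayed pair of inequalities --- the ones that actually matter, since Lemma \ref{suff:30} needs $\bar s_{n}-s_{n}\to 0$ --- would fail under your construction.

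The idea you are missing is that the sandwich must be performed at level $n+1$, \emph{after} applying the matrix, because hypothesis 5 makes $A_{n}$ a flattening map: setting $X^{i}=\sum_{j}A_{n}^{i,i}(1,j)x^{i}(j)$, each entry $y^{1}(j')$ lies within a factor $r_{n}^{\pm 2}$ of the single number $X^{1}+l_{n}^{-1}X^{2}$, and similarly for $y^{2}$, independently of $j'$. (This is also where the $r_{n}^{\pm2}$ factors in the statement come from; in your scheme they would not appear at all, which is a warning sign.) The paper then uses hypothesis 4 at level $n+1$ to compare $su_{n+1}^{1}+tu_{n+1}^{2}$ with $K_{n+1}$ times explicit $2\times 2$ matrices in $r_{n+1},l_{n+1}$, defines $s,t,\bar s,\bar t$ by inverting those matrices applied to the vector $(X^{1}+l_{n}^{-1}X^{2},\,l_{n}^{-1}X^{1}+X^{2})$, and obtains the difference bound from the identity that the stated combination equals $K_{n+1}^{-1}(r_{n+1}-r_{n+1}^{-1})$ times a positive matrix applied to that same vector, which is in turn bounded by $r_{n}^{2}r_{n+1}CK_{n+1}$ because $(y^{1},y^{2})\le Cu_{n+1}$. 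Your outline correctly identifies hypotheses 4 and 5 as the essential inputs, but applies them in the wrong order and at the wrong level, and the spread estimate cannot be recovered from a level-$n$ sandwich.
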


\begin{proof}
First, for $ i = 1,2 $, 
define $X^{i} = \sum_{j=1}^{J_{n}} A_{n}^{i,i}(1,j) x^{i}(j)$, 
which is clearly non-negative.
Then, for any $1 \leq j' \leq J_{n+1}$, we have
\begin{eqnarray*}
y^{1}(j') & = & \sum_{j=1}^{J_{n}} \left( A^{1,1}_{n}(j', j) x^{1}(j) + 
 A^{1,2}_{n}(j', j) x^{2}(j) \right) \\
  & \leq & \sum_{j=1}^{J_{n}} \left(
   r_{n} l_{n}A^{2,1}_{n}(j', j) x^{1}(j) + 
 A^{1,2}_{n}(j', j) x^{2}(j)\right) \\
   & \leq & \sum_{j=1}^{J_{n}} \left(
    r_{n}^{2} A^{1,1}_{n}(1, j) x^{1}(j) + 
   r_{n} l_{n}^{-1}  A^{2,2}_{n}(1, j) x^{2}(j)\right) \\
  &  \leq & r_{n}^{2}( X^{1} + l_{n}^{-1} X^{2}).
  \end{eqnarray*}
  
 Similar calculations also prove
  \begin{eqnarray*}
y^{2}(j') 
  &  \leq & r_{n}^{2}(l_{n}^{-1}  X^{1} + X^{2}), \\
  y^{1}(j') 
  &  \geq & r_{n}^{-2}(  X^{1} + l_{n}^{-1} X^{2}), \\
  y^{2}(j') 
  &  \geq & r_{n}^{-2}(l_{n}^{-1}  X^{1} + X^{2}). \\
  \end{eqnarray*}
  
  All four  inequalities  can be summarized as follows
  \[
  r_{n}^{-2} \left[ \begin{matrix}
 1 & l_{n}^{-1} \\ l_{n}^{-1}  & 1 \end{matrix} \right]
 \left[ \begin{matrix}
 X^{1} \\ X^{2}  \end{matrix} \right]
 \leq  
  \left[\begin{matrix} y^{1} \\ y^{2} \end{matrix} \right]
\leq  r_{n}^{2} 
 \left[ \begin{matrix}
 1 & l_{n}^{-1} \\ l_{n}^{-1}  & 1 \end{matrix} \right]
 \left[ \begin{matrix}
 X^{1} \\ X^{2}  \end{matrix} \right].
 \]
  
  We also have 
  \[
   \left[ \begin{matrix}
 y^{1} \\ y^{2}  \end{matrix} \right] =
 \left[ \begin{matrix}
 A_{n}^{1,1} & A_{n}^{1,2} \\A_{n}^{2,1} & A_{n}^{2,2} \end{matrix} \right]
 \left[ \begin{matrix}
 x^{1} \\ x^{2}  \end{matrix} \right]
 \leq C
 \left[ \begin{matrix}
 A_{n}^{1,1} & A_{n}^{1,2} \\A_{n}^{2,1} & A_{n}^{2,2} \end{matrix} \right]
 \left[ \begin{matrix}
 u_{n}^{1} \\ u_{n}^{2}  \end{matrix} \right]
 = C
 \left[ \begin{matrix}
 u_{n+1}^{1} \\ u_{n+1}^{2}  \end{matrix} \right]
  \]
  which then implies
  \[
 \left[ \begin{matrix}
 1 & l_{n}^{-1} \\ l_{n}^{-1}  & 1 \end{matrix} \right]
 \left[ \begin{matrix}
 X^{1} \\ X^{2}  \end{matrix} \right] \leq 
 r_{n}^{2} \left[\begin{matrix} y^{1} \\ y^{2} \end{matrix} \right]
   \leq r_{n}^{2} C  \left[\begin{matrix} u_{n+1}^{1} \\ u_{n+1}^{2} \end{matrix} \right] 
   \leq r_{n}^{2} r_{n+1} C  \left[\begin{matrix} K_{n+1} \\ K_{n+1} \end{matrix} \right]
 \]  
 with the last inequality given 
  as condition 4 in Theorem \ref{intro:30}.

  For any positive real numbers $s, t$,  $n \geq 1$ 
  and $1 \leq j, j' \leq J_{n+1}$,
   we have 
  \begin{eqnarray*}
  su_{n+1}^{1,1}(j) + t u_{n+1}^{2,1}(j) & \leq & 
  sr_{n+1} K_{n+1}- t r_{n+1}^{-1} l_{n+1}^{-1} K_{n+1} \\
  &  =&    K_{n+1}( r_{n+1} s - r_{n+1}^{-1} l_{n+1}^{-1} t).
   \end{eqnarray*}
   
   Similar calculations also prove
  \begin{eqnarray*}
 su_{n+1}^{1,2}(j) + t u_{n+1}^{2,2}(j) 
  &  \leq  &    K_{n+1}( - r_{n+1}^{-1} l_{n+1}^{-1} s + r_{n+1}  t), \\
   su_{n+1}^{1,1}(j) + t u_{n+1}^{2,1}(j) & \geq & 
     K_{n+1}( r_{n+1}^{-1} s - r_{n+1} l_{n+1}^{-1} t), \\
      su_{n+1}^{1,2}(j) + t u_{n+1}^{2,2}(j) 
  &   \geq & 
     K_{n+1}( - l_{n+1}^{-1} r_{n+1} s +  r_{n+1}^{-1}  t). 
  \end{eqnarray*}
  
   These can be summarized in the form
  \begin{eqnarray*}
   su_{n+1}^{1} + t u_{n+1}^{2} & \leq   &  K_{n+1} \left[ \begin{matrix}
  r_{n+1} &  - r_{n+1}^{-1} l_{n+1}^{-1} \\ 
   - r_{n+1}^{-1} l_{n+1}^{-1} &  r_{n+1} \end{matrix} \right]
    \left[ \begin{matrix}
 s \\ t \end{matrix} \right], \\
    su_{n+1}^{1} + t u_{n+1}^{2}  
  & \geq  &   K_{n+1} \left[ \begin{matrix}
  r_{n+1}^{-1} &  - r_{n+1} l_{n+1}^{-1} 
  \\  - r_{n+1} l_{n+1}^{-1} &  r_{n+1}^{-1} \end{matrix} \right] 
  \left[ \begin{matrix}
 s \\ t \end{matrix} \right].   
 \end{eqnarray*}
  
 We recall that $l_{n+1} \geq 4$ while $2 > r_{n+1}$
 and it follows that 
 $r_{n+1}^{-2} - r_{n+1}^{2} l_{n+1}^{-2} > 4^{-1} - 4 \cdot 4^{-2} = 0$.
   We define
 \[
  \left[ \begin{matrix}
 s \\ t  \end{matrix} \right]  = K_{n+1}^{-1} 
 (  r_{n+1}^{-2}  - r_{n+1}^{2} l_{n+1}^{-2})^{-1} 
  \left[ \begin{matrix}
 r_{n+1}^{-1} & r_{n+1}l_{n+1}^{-1} \\ 
 r_{n+1}l_{n+1}^{-1}  &   r_{n+1}^{-1} \end{matrix} \right]
  \left[ \begin{matrix}
 1 & l_{n}^{-1} \\ l_{n}^{-1}  & 1 \end{matrix} \right]
 \left[ \begin{matrix}
 X^{1} \\ X^{2}  \end{matrix} \right]
 \]
and 
 \[
  \left[ \begin{matrix}
 \bar{s} \\ \bar{t}  \end{matrix} \right]  = K_{n+1}^{-1} 
 (  r_{n+1}^{2}  - r_{n+1}^{-2} l_{n+1}^{-2})^{-1} 
  \left[ \begin{matrix}
 r_{n+1} & r_{n+1}^{-1}l_{n+1}^{-1} \\ 
 r_{n+1}^{-1}l_{n+1}^{-1}  &   r_{n+1} \end{matrix} \right]
  \left[ \begin{matrix}
 1 & l_{n}^{-1} \\ l_{n}^{-1}  & 1 \end{matrix} \right]
 \left[ \begin{matrix}
 X^{1} \\ X^{2}  \end{matrix} \right]
 \]
 noting that all are positive.
The first pair of 
inequalities of the conclusion are
 immediate consequences. It remains to verify
the last statement.

We have 
\begin{eqnarray*} 
  &  (r_{n+1}^{2} - r_{n+1}^{-2}l_{n+1}^{-2}) \left[ \begin{matrix}
 \bar{s} \\ \bar{t}   \end{matrix} \right]  -  
   (r_{n+1}^{-2} - r_{n+1}^{2}l_{n+1}^{-2}) \left[ \begin{matrix}
 s \\ t   \end{matrix} \right]  &   \\
  =  &  K_{n+1}^{-1} 
  \left[ \begin{matrix}
 r_{n+1} - r_{n+1}^{-1} & (r_{n+1}^{-1} - r_{n+1}) l_{n+1}^{-1} \\ 
 (r_{n+1}^{-1} - r_{n+1}) l_{n+1}^{-1}  &   r_{n+1} - r_{n+1}^{-1} \end{matrix} \right]
  \left[ \begin{matrix}
 1 & l_{n}^{-1} \\ l_{n}^{-1}  & 1 \end{matrix} \right]
 \left[ \begin{matrix}
 X^{1} \\ X^{2}  \end{matrix} \right]  &    \\
   =  &   K_{n+1}^{-1} 
 (r_{n+1} - r_{n+1}^{-1}) \left[ \begin{matrix}
  1 & -l_{n+1}^{-1} \\ 
 - l_{n+1}^{-1}  &  1 \end{matrix} \right]
  \left[ \begin{matrix}
 1 & l_{n}^{-1} \\ l_{n}^{-1}  & 1 \end{matrix} \right]
 \left[ \begin{matrix}
 X^{1} \\ X^{2}  \end{matrix} \right]  &    \\
  \leq   &   K_{n+1}^{-1} 
 (r_{n+1} - r_{n+1}^{-1}) 
  \left[ \begin{matrix}
 1 & l_{n}^{-1} \\ l_{n}^{-1}  & 1 \end{matrix} \right]
 \left[ \begin{matrix}
 X^{1} \\ X^{2}  \end{matrix} \right]  &    \\
  =  &   K_{n+1}^{-1} 
 (r_{n+1} - r_{n+1}^{-1}) 
  r_{n}^{2} r_{n+1} C 
 \left[ \begin{matrix}
 K_{n+1} \\ K_{n+1}  \end{matrix} \right].  &    
 \end{eqnarray*}
 On the other hand, the matrix
 \[
 \left[ \begin{matrix}
  1 & -l_{n+1}^{-1} \\ 
 - l_{n+1}^{-1}  &  1 \end{matrix} \right]
  \left[ \begin{matrix}
  1 & l_{n}^{-1} \\ 
  l_{n}^{-1}  &  1  \end{matrix} \right]
 = \left[ \begin{matrix}
 1 - l_{n}^{-1}l_{n+1}^{-1} & l_{n}^{-1}- l_{n+1}^{-1}
  \\ l_{n}^{-1} - l_{n+1}^{-1} & 1 - l_{n}^{-1}l_{n+1}^{-1}
   \end{matrix} \right] 
 \] 
 is clearly positive since $l_{n+1} \geq l_{n}$ so 
 all quantities are positive. This completes the proof.
\end{proof}

\begin{lemma}
\label{suff:30}
If $\sigma$ is any state on $(G, G^{+})$, then
 $\sigma = \sigma(u^{1}) \sigma_{1} + \sigma(u^{2})\sigma_{2}$. In particular,
 the only extremal states on $(G, G^{+})$ are $\sigma_{1}, \sigma_{2}$.
\end{lemma}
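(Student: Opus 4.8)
The plan is to combine the sandwich estimates of Lemma~\ref{suff:20} with the facts $r_{n}\downarrow 1$ and $l_{n}\to\infty$ to pin down the value of an arbitrary state on every element of $G$.

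First I would reduce to a convenient class of elements. Both sides of the claimed identity $\sigma(g)=\sigma(u^{1})\sigma_{1}(g)+\sigma(u^{2})\sigma_{2}(g)$ are additive in $g$, so the set of $g$ for which it holds is a subgroup of $G$. By Lemma~\ref{suff:10} each $u_{n}$ is strictly positive, so every coordinate of $u_{n}$ is at least $1$; hence any $x\in\Z^{J_{n}}\oplus\Z^{J_{n}}$ satisfies $(0,0)\leq x+Cu_{n}\leq C'u_{n}$ for suitable positive integers $C,C'$. Since $\eta_{n}(x)=\eta_{n}(x+Cu_{n})-C\eta_{n}(u_{n})$ and $\eta_{n}(u_{n})=u$ — for which the identity is immediate from $\sigma_{i}(u)=1$ and $\sigma(u^{1})+\sigma(u^{2})=\sigma(u)=1$ — it suffices to verify the identity when $g=\eta_{n}(x)$ with $(0,0)\leq x\leq Cu_{n}$, because every element of $G$ has the form $\eta_{n}(x)$ and hence differs from such an element by a multiple of $u$.

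Fixing such a $g=\eta_{n}(x)$, I would, for each $m\geq n$, let $x_{m}\in\Z^{J_{m}}\oplus\Z^{J_{m}}$ be the image of $x$ at level $m$, so $\eta_{m}(x_{m})=g$ and $(0,0)\leq x_{m}\leq Cu_{m}$ — the upper bound propagating because the connecting matrices are nonnegative. Applying Lemma~\ref{suff:20} at level $m$ produces positive reals $s_{m+1},t_{m+1},\bar s_{m+1},\bar t_{m+1}$ with $r_{m}^{-2}(s_{m+1}u_{m+1}^{1}+t_{m+1}u_{m+1}^{2})\leq x_{m+1}\leq r_{m}^{2}(\bar s_{m+1}u_{m+1}^{1}+\bar t_{m+1}u_{m+1}^{2})$, together with the two displayed difference estimates. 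Any state $\tau$, composed with $\eta_{m+1}$, is given by a nonnegative real row vector and hence is monotone for the coordinatewise order on $\R^{J_{m+1}}\oplus\R^{J_{m+1}}$; applying it to the sandwich and using $\eta_{m+1}(u_{m+1}^{i})=u^{i}$ and $\eta_{m+1}(x_{m+1})=g$ gives $r_{m}^{-2}(s_{m+1}\tau(u^{1})+t_{m+1}\tau(u^{2}))\leq\tau(g)\leq r_{m}^{2}(\bar s_{m+1}\tau(u^{1})+\bar t_{m+1}\tau(u^{2}))$. Specializing $\tau$ to $\sigma_{1}$ isolates $s_{m+1},\bar s_{m+1}$ around $\sigma_{1}(g)$, specializing to $\sigma_{2}$ isolates $t_{m+1},\bar t_{m+1}$ around $\sigma_{2}(g)$, and specializing to the given $\sigma$ retains the general two-term bound.

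The crux — and the step I expect to be the main obstacle — is the passage to the limit. Writing $a_{m}=r_{m+1}^{2}-r_{m+1}^{-2}l_{m+1}^{-2}$, $b_{m}=r_{m+1}^{-2}-r_{m+1}^{2}l_{m+1}^{-2}$, $\epsilon_{m}=Cr_{m}^{2}r_{m+1}(r_{m+1}-r_{m+1}^{-1})$, we have $a_{m},b_{m}\to 1$ and $\epsilon_{m}\to 0$, and the first difference estimate reads $0\leq a_{m}\bar s_{m+1}-b_{m}s_{m+1}\leq\epsilon_{m}$. From $r_{m}^{-2}s_{m+1}\leq\sigma_{1}(g)$ one gets $\limsup_{m}s_{m+1}\leq\sigma_{1}(g)$, while $\sigma_{1}(g)\leq r_{m}^{2}\bar s_{m+1}$ together with $a_{m}\bar s_{m+1}\leq b_{m}s_{m+1}+\epsilon_{m}$ gives $\liminf_{m}s_{m+1}\geq\sigma_{1}(g)$; hence $s_{m+1}\to\sigma_{1}(g)$, and then $a_{m}\bar s_{m+1}\to\sigma_{1}(g)$ forces $\bar s_{m+1}\to\sigma_{1}(g)$. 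The same argument with the second difference estimate yields $t_{m+1},\bar t_{m+1}\to\sigma_{2}(g)$. Letting $m\to\infty$ in the two-term bound on $\sigma(g)$ (using $r_{m}\to 1$) gives $\sigma(g)=\sigma(u^{1})\sigma_{1}(g)+\sigma(u^{2})\sigma_{2}(g)$, which by the reduction establishes the identity on $G$. Finally, since $\sigma(u^{1}),\sigma(u^{2})\geq 0$ with $\sigma(u^{1})+\sigma(u^{2})=1$ (Lemma~\ref{suff:10}), the state space is the segment $\{\lambda\sigma_{1}+(1-\lambda)\sigma_{2}:0\leq\lambda\leq 1\}$, which is non-degenerate because $\sigma_{1}(u^{1})=1\neq 0=\sigma_{2}(u^{1})$; its only extreme points, hence the only extremal states, are $\sigma_{1}$ and $\sigma_{2}$.
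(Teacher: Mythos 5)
Your proposal is correct and follows essentially the same route as the paper: reduce to elements sandwiched as $0\leq x\leq Cu_{n}$, apply Lemma~\ref{suff:20} at each level, evaluate the resulting inequalities under $\sigma_{1}$, $\sigma_{2}$ and $\sigma$, and pass to the limit using $r_{n}\to 1$ and the vanishing of the difference estimates. Your reduction via adding a multiple of $u$ and your explicit $\limsup$/$\liminf$ bookkeeping are just more detailed versions of steps the paper treats briefly.
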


\begin{proof}
If suffices to let $x$ be any positive element of $G$
and show that 
\[
\sigma(x) = \sigma(u^{1}) \sigma_{1}(x) + \sigma(u^{2}) \sigma_{2}(x).
\]
 
 Without loss of generality, we may assume 
 $x= \eta_{n_{0}}(x^{1}_{n_{0}}, x^{2}_{n_{0}})$ for some
  $ 0 \leq (x^{1}_{n_{0}}, x^{2}_{n_{0}}) \leq C u_{n_{0}}$,
  $n_{0}$ and $C > 0$. For each $n \geq n_{0}$, we let
  $(x^{1}_{n}, x^{2}_{n})$ be the element 
  of $\Z^{J_{n}+} \oplus \Z^{J_{n}+}$ obtained from
   $(x^{1}, x^{2})$ in the 
  inductive system. The hypotheses of Lemma \ref{suff:20} hold for all
  $n \geq n_{0}$ and hence we 
  obtain $s_{n}, \bar{s}_{n}, t_{n}, \bar{t}_{n}$
   satisfying the conclusion of \ref{suff:20}. In particular, we have 
  \[
   r_{n}^{-2}\left( s_{n}u_{n+1}^{1} + t_{n}u_{n+1}^{2}  \right)
  \leq (x_{n+1}^{1}, x_{n+1}^{2}) \leq  
  r_{n}^{2}\left(  \bar{s}_{n}u_{n+1}^{1} + \bar{t}_{n}u_{n+1}^{2}  \right)
\]
 for all $n \geq n_{0}$. We apply
  $\sigma_{1} \circ \gamma_{n}, \sigma_{2}\circ \gamma_{n}$
   and then $\sigma\circ \gamma_{n}$ to obtain  
\begin{eqnarray*}
r_{n}^{-2} s_{n} & \leq \sigma_{1}(x) \leq & r_{n}^{2} \bar{s}_{n}, \\
r_{n}^{-2} t_{n} & \leq \sigma_{2}(x) \leq & r_{n}^{2} \bar{t}_{n}, \\
\end{eqnarray*}
 and 
 \[
 r_{n}^{-2}(s_{n}  \sigma(u^{1})+ t_{n}  \sigma(u^{2})) 
 \leq \sigma(x) \leq  
 r_{n}^{2}(\bar{s}_{n} \sigma(u^{1})+ \bar{t}_{n} \sigma(u^{2})).
 \]
 From the last part of the conclusion of Lemma 
 \ref{suff:20}, we also know that 
 $\bar{s}_{n} - s_{n}, \bar{t}_{n}-t_{n}$ tend to zero so the first
 two inequalities above (and the fact that $r_{n}$ tends to $1$)
 imply that $s_{n}, \bar{s}_{n}$ and 
 $t_{n}, \bar{t}_{n}$ converge
 to $\sigma_{1}(x) $ and $\sigma_{2}(x)$, respectively.
 Taking limits in the third inequality proves that 
 \[
 \sigma(x) = \sigma(u^{1}) \sigma_{1}(x) + \sigma(u^{2}) \sigma_{2}(x) 
 \]
 as desired.
\end{proof}

Theorem \ref{intro:30} follows from Lemmas \ref{suff:10} and 
\ref{suff:30}.

\section{Necessity}

In this section, we prove Theorem \ref{intro:40}.

We begin with $G, G^{+}, u^{1}, u^{2}, \sigma_{1}, \sigma_{2}$
and $\sigma$ as in Theorem \ref{intro:20}.

 We may write $(G, G^{+})$ as an inductive limit
 in the category of ordered abelian groups
 \[
\xymatrix{
 \Z^{I_{1} }  \ar[r]^{A_{1}} & \Z^{I_{2} }  \ar[r]^{A_{2}} &
  \cdots \ar[r] & G, } 
\]
where each $\Z^{I_{m}}$ is considered as column vectors
given the standard order and 
$A_{m}$ is a $I_{m+1} \times I_{m}$ non-negative integer matrix.
We let $\gamma_{m}$ denote the canonical positive homomorphism 
of $\Z^{I_{m}}$ into $G$.
We will let 
$\varepsilon_{i}, 1 \leq i \leq I,$ denote the standard 
generators of $\Z^{I}$.

By standard arguments, we may assume that the Bratteli diagram 
which provides the inductive system above
 has no sources, other than those in the first vertex set,
  or sinks. This implies
that, for each $m \geq 1 $ and $1 \leq i \leq I_{m}$, 
$\gamma_{m}(\varepsilon_{i})$ is strictly positive in $G$; that is, 
it lies in $G^{+} - \{ 0 \}$. We call such 
a map \emph{strictly positive}.

Without loss of generality, we may assume 
that, for each $m \geq 1$, there are 
elements $u_{m}^{1}, u_{m}^{2}$ satisfying 
$A_{m}u_{m}^{1} = u_{m+1}^{1}, A_{m}u_{m}^{2} = u_{m+1}^{2},
 \gamma_{m}(u_{m}^{1}) =u^{1}, \gamma_{m}(u_{m}^{2}) =u^{2}$.

We begin with a  technical result, which is probably known, but 
we provide   a proof for completeness. Let us observe that as 
$\sigma(G)$ contains $\Z^{2}$, $G$ is acyclic (i.e. is not cyclic).

\begin{prop}
\label{proof:10}
Let $G$ be an acyclic, simple dimension group, let $x_{1}, \ldots, x_{J}$
be strictly positive elements of $G$ and let $n \geq 1$. 
There exist $I \geq 1$, strictly positive elements
$y_{1}, \ldots, y_{I}$  of $G$ and integers 
$ A(i,j), 1 \leq i \leq I, 1 \leq j \leq J$ such that $A(i,j) \geq n$, for
all $1 \leq i \leq I, 1 \leq j \leq J$, and 
\[
x_{j} = \sum_{i=1}^{I} A(i,j)y_{i},
\]
for all $1 \leq j \leq J$.
\end{prop}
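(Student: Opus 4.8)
The plan is to write each $x_j$ as an $\N$-linear combination of a fixed collection of strictly positive elements, using the Riesz interpolation / decomposition property, and then to enlarge the coefficients to be at least $n$ by a telescoping-type trick that exploits simplicity. First I would use the fact that $G$ is a dimension group to realize $x_1, \ldots, x_J$ inside an intermediate $\Z^I$: since $G$ is the inductive limit of a system $\Z^{I_1} \to \Z^{I_2} \to \cdots$, and each $x_j$ is strictly positive, I can choose $m$ large enough that all the $x_j$ lie in the image $\gamma_m(\Z^{I_m +})$; write $x_j = \gamma_m(\tilde x_j)$ with $\tilde x_j \in \Z^{I_m +}$. Then in $\Z^{I_m}$ I have $\tilde x_j = \sum_i \tilde x_j(i)\, \varepsilon_i$ with non-negative integer coefficients, and applying $\gamma_m$ gives $x_j = \sum_i \tilde x_j(i)\, \gamma_m(\varepsilon_i)$, where each $\gamma_m(\varepsilon_i)$ is strictly positive (the no-sources/no-sinks assumption). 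So up to relabelling I have a presentation $x_j = \sum_{i=1}^{I} B(i,j) z_i$ with $B(i,j) \ge 0$ and each $z_i$ strictly positive. The remaining issue is that some coefficients $B(i,j)$ may be $0$ or positive but smaller than $n$.

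To fix the size of the coefficients, the key idea is that in a simple dimension group one can split a single strictly positive element into many strictly positive pieces, and moreover one can absorb a "defect" using strict positivity. Concretely, I would first pass one more stage down the inductive system: choose $m' > m$ so that each $z_i = \gamma_m(\varepsilon_i)$ becomes, at stage $m'$, a sum $z_i = \sum_{k} C(i,k) w_k$ of strictly positive elements $w_k = \gamma_{m'}(\varepsilon_k)$, where—by telescoping enough stages and using that all the connecting matrices of a simple diagram may be assumed strictly positive—each $C(i,k) \ge 1$ and in fact the number of $w_k$'s that appear is as large as we like. Substituting, $x_j = \sum_k \big(\sum_i B(i,j) C(i,k)\big) w_k =: \sum_k D(k,j) w_k$. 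Now every $D(k,j)$ is a sum over $i$ of $B(i,j) C(i,k)$; the coefficients $D(k,j)$ are large whenever \emph{some} $B(i,j) > 0$ and the corresponding $C(i,k)$ is large, but a column $j$ with $\tilde x_j = 0$ is impossible (the $x_j$ are strictly positive, hence nonzero), so every column of $B$ is nonzero, and by telescoping far enough we can make $\min_{i,k} C(i,k)$ — and hence $\min_{k,j} D(k,j)$ — at least $n$. This uses only that the connecting matrices of a simple Bratteli diagram become, after telescoping, strictly positive with arbitrarily large entries, which follows from strict positivity being preserved and amplified under composition.

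I expect the main obstacle to be the bookkeeping that guarantees \emph{every} entry $D(k,j) \ge n$ rather than merely the entries in nonzero positions of $B$: one must ensure that for each fixed $k$ and each $j$, at least one term $B(i,j)C(i,k)$ with $B(i,j)\ge 1$ and $C(i,k)$ large actually occurs, which needs the strict positivity of the \emph{whole} telescoped matrix $C$ (every $C(i,k)\ge 1$), combined with the fact that each column of $B$ has a nonzero entry. The strict-positivity-after-telescoping statement is exactly the standard fact quoted in the introduction (for a simple dimension group one may assume all $A_n$ have strictly positive entries after telescoping), and the amplification $\min C(i,k) \ge n$ is obtained by telescoping further, since composing $t$ strictly positive non-negative integer matrices forces the entries of the product to grow at least linearly in $t$. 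Once these two ingredients are in place, setting $I$ to be the number of $w_k$'s, $y_i := w_i$, and $A(i,j) := D(i,j)$ completes the proof; acyclicity of $G$ is used only to guarantee $I \ge 1$ and that the diagram is nontrivial, i.e. that strictly positive elements exist and can genuinely be subdivided.
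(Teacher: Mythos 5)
Your argument is correct in outline but takes a genuinely different route from the paper's. The paper's proof is order-theoretic: it invokes Lemma 14.4 of Goodearl to produce a single strictly positive $x$ with $0 < x \leq x_{j}$ for all $j$, then Lemma 14.5 (this is where acyclicity enters) to produce $y>0$ with $ny \leq x$, and finally represents $y, x, x_{1}, \ldots, x_{J}$ at one stage $\Z^{I_{m}}$ where the representative of $y$ is an order unit, i.e.\ has all coordinates at least $1$; the inequality $n\tilde{y} \leq \tilde{x}_{j}$ then forces every coordinate of $\tilde{x}_{j}$ to be at least $n$, and one takes $y_{i} = \gamma_{m}(\varepsilon_{i})$, $A(i,j) = \tilde{x}_{j}(i)$. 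You instead get the lower bound $n$ combinatorially, from the growth of the entries of the telescoped incidence matrices; this works and avoids citing Goodearl, but the growth statement is the one step you must actually prove, and your stated justification is not right: ``composing $t$ strictly positive non-negative integer matrices forces the entries of the product to grow at least linearly in $t$'' fails when all the matrices are the $1\times 1$ matrix $(1)$, which is exactly the cyclic case $G \cong \Z$. The correct argument is that the minimum entry of $A_{m'-1}\cdots A_{m}$ is at least $\prod_{p=m+1}^{m'-1} I_{p}$ (each multiplication by a strictly positive matrix replaces an entry by a full column sum of the previous product), so if the minimum entry stayed bounded as $m' \to \infty$ then $I_{p}=1$ eventually and the connecting maps would eventually be multiplication by $1$, giving $G \cong \Z$; acyclicity rules this out. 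So acyclicity does more for you than guaranteeing the diagram is ``nontrivial'' --- it is precisely what makes the telescoped entries tend to infinity, playing the role that Goodearl's Lemma 14.5 plays in the paper. With that point made precise, your factorization $D = CB$, together with the observation that each column of $B$ is nonzero because $x_{j} \neq 0$, does give $D(k,j) \geq n$ for every $k,j$, and the proof is complete.
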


\begin{proof}
By Lemma 14.4 of Goodearl \cite{Good:book}, we may find $x$ in $G$ with
 $ 0 < x \leq x_{j}$, for all $1 \leq j \leq J$.
 Then by Lemma 14.5 of \cite{Good:book}, we may find $y > 0$ such that $ny \leq x$.
 We may then find $\Z^{I_{m}}$ in our inductive system so that 
 $y,x, x_{1}, \ldots, x_{J}$ are all represented by elements
 of $\Z^{I_{m}}$ and all the inequalities above hold there. Additionally, we
 may assume the representative for $y$ is an order unit in $\Z^{I_{m}}$.
 The conclusion follows by letting $I=I_{m}$ and 
  $y_{i} = \gamma_{m}(\varepsilon_{i}), 1 \leq i \leq I$.
  \end{proof}
  
We next provide an improvement on this result.

\begin{prop}
\label{proof:20}
Let $(G, G^{+})$ be a simple, acyclic dimension group.
Suppose that $\eta: (\Z^{J}, \Z^{J+}) \rightarrow (G, G^{+})$
is a positive group homomorphism, $v$ is an order unit in
$\Z^{J+}$ and $s > 1$ is rational. There exists a 
positive integer $K_{0}$ such that, for all
$K \geq K_{0}$, there is a $2J \times J$ non-negative matrix $B$
and 
a positive group homomorphism
 $\eta': (\Z^{2J}, \Z^{2J+}) \rightarrow (G, G^{+})$
such that
\begin{enumerate}
\item $\eta' \circ B = \eta$,
\item $K \leq (Bv)(j) \leq sK$, for all $ 1 \leq j \leq 2J$.
\end{enumerate}
\end{prop}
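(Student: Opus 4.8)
The plan is to build the matrix $B$ first, in a rigid form that makes condition~(2) automatic, and then to produce $\eta'$ by ``reshuffling'' through $B$ a factorization of $\eta$ whose coefficients are enormous --- of size comparable to $K^{2}$ --- supplied by Proposition~\ref{proof:10}; the reshuffling is purely numerical and is handled by the Sylvester--Frobenius (``Chicken McNugget'') theorem. Concretely, put $m=\max_{1\le j\le J}v(j)\ge 1$ and let $K_{0}=\lceil 4m/(s-1)\rceil +4$, a quantity depending only on $v$ and $s$. Fix $K\ge K_{0}$. For each $j$ the interval $[\,K/v(j),\,sK/v(j)\,]$ has length $(s-1)K/v(j)>4$, so it contains two consecutive --- hence coprime --- integers $b_{j}^{1}<b_{j}^{2}$, with $b_{j}^{1}v(j)\ge K$ and $b_{j}^{2}v(j)\le sK$. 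I would then take $B$ to be the $2J\times J$ matrix whose $j$-th column has $b_{j}^{1}$ in row $2j-1$, $b_{j}^{2}$ in row $2j$, and $0$ elsewhere; since every row is of the form $2j-1$ or $2j$, and $(Bv)(2j-1)=b_{j}^{1}v(j)$ and $(Bv)(2j)=b_{j}^{2}v(j)$ both lie in $[K,sK]$, condition~(2) holds.

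For condition~(1), write $x_{j}=\eta(\varepsilon_{j})\in G^{+}$, which we may take to be strictly positive, as it is in all the applications below. First I would apply Proposition~\ref{proof:10} to $x_{1},\dots,x_{J}$ with the large lower bound $n=\lceil 2s^{2}K^{2}\rceil$, obtaining $I\ge 1$, strictly positive elements $y_{1},\dots,y_{I}$ of $G$, and integers $A(i,j)\ge n$ with $x_{j}=\sum_{i=1}^{I}A(i,j)y_{i}$ for all $j$. Because $b_{j}^{1},b_{j}^{2}$ are coprime and $A(i,j)\ge n\ge b_{j}^{1}b_{j}^{2}+1$ (using $b_{j}^{1},b_{j}^{2}\le sK$), the Sylvester--Frobenius theorem yields, for each $i,j$, integers $A'(i,2j-1)\ge 1$ and $A'(i,2j)\ge 1$ with $A(i,j)=A'(i,2j-1)b_{j}^{1}+A'(i,2j)b_{j}^{2}$. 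These entries define a non-negative $I\times 2J$ integer matrix $A'$ with $A=A'B$ and with every column strictly positive. Setting $\tilde y_{k}=\sum_{i=1}^{I}A'(i,k)y_{i}$, a strictly positive element of $G$, and $\eta'(\varepsilon_{k})=\tilde y_{k}$ defines a positive homomorphism $\eta'\colon(\Z^{2J},\Z^{2J+})\to(G,G^{+})$, and then $(\eta'\circ B)(\varepsilon_{j})=\sum_{k}B(k,j)\tilde y_{k}=\sum_{i}(A'B)(i,j)y_{i}=\sum_{i}A(i,j)y_{i}=x_{j}=\eta(\varepsilon_{j})$, so $\eta'\circ B=\eta$.

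The only genuine content --- and the reason a threshold $K_{0}$ is needed rather than all $K$ --- is that in the last step the coefficients $A(i,j)$ of a factorization of $\eta$ must beat the Frobenius number of $(b_{j}^{1},b_{j}^{2})$, which is of order $K^{2}$; this is precisely why one is entitled to feed Proposition~\ref{proof:10} a bound $n$ of order $K^{2}$, legitimate since $B$, $\eta'$ and all the intermediate data may depend on $K$. I do not expect any obstacle beyond keeping these size estimates mutually consistent. The one point worth care is choosing the Sylvester--Frobenius representations with $A'(i,2j-1),A'(i,2j)\ge 1$, so that every column of $A'$, hence every $\tilde y_{k}$, is strictly positive --- a refinement that costs nothing here but is what will make the proposition applicable in the rest of the paper.
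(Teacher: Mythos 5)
Your proof is correct, and while its skeleton matches the paper's --- take $B$ to be (up to a row permutation) a stack of two diagonal matrices whose $j$-th entries are two nearby integers of size about $K/v(j)$, so that condition (2) is automatic, and then push a factorization of $\eta$ supplied by Proposition \ref{proof:10} with a lower bound of order $K^{2}$ through $B$ --- the arithmetic engine is genuinely different. The paper picks a single $N$, a common multiple of all the $v(j)$ lying in $[K, sK-\prod_{j}v(j)]$, uses diagonal entries $Nv(j)^{-1}$ and $Nv(j)^{-1}+1$, and splits each $A(i,j)$ via division with remainder by $(Nv(j)^{-1})^{2}$ and an explicit algebraic identity; you instead choose, for each $j$ separately, two consecutive (hence coprime) integers $b_{j}^{1}<b_{j}^{2}$ in $[K/v(j), sK/v(j)]$ and invoke Sylvester--Frobenius to write $A(i,j)=A'(i,2j-1)b_{j}^{1}+A'(i,2j)b_{j}^{2}$ with both coefficients at least $1$. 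Your route buys two things: a smaller threshold $K_{0}$ (linear in $\max_{j}v(j)$ rather than in $\prod_{j}v(j)$), and the fact that non-negativity --- indeed strict positivity --- of the factor matrix $A'$ comes for free from the Frobenius bound, whereas in the paper's identity one must separately verify that the entries $Q(i,j)Nv(j)^{-1}-R(i,j)$ are non-negative, which is the one delicate point of that decomposition. Two shared caveats, neither of which is a defect of your argument relative to the paper's: both proofs apply Proposition \ref{proof:10} to $x_{j}=\eta(\varepsilon_{j})$ and hence tacitly assume these are strictly positive (you flag this explicitly; it holds in all the applications), and in both cases $B$ and $\eta'$ are allowed to depend on $K$, which is all the statement requires.
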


\begin{proof}
We choose 
\[
K_{0} > (s-1)^{-1} 2 \prod_{j=1}^{J} v(j) > 0.
\]
If $K \geq K_{0}$, then $sK - K > 2 \prod_{j=1}^{J} v(j)$ and it 
follows that there
are at least two consecutive integer multiples of
 $\prod_{j=1}^{J} v(j)$ between 
$K$ and $sK$. Call the first $N$. That is, we have 
$K \leq N \leq N+\prod_{j=1}^{J} v(j) \leq sK$
and $N$ is divisible by 
$\prod_{j=1}^{J} v(j)$.

Applying Proposition \ref{proof:10}
to $x_{j} = \eta(\varepsilon_{j}), 1 \leq j \leq J,$
and $n = N^{2} +1 $, we may find $I, y_{i}, 1 \leq i \leq I,$
and $A(i,j)$, $1 \leq j \leq J, 1 \leq i \leq I,$
as in the conclusion.

That is, for any $i,j$, we have 
$A(i,j) \geq N^{2} > \left( N v(j)^{-1} \right)^{2}$ and we may 
write
\[
A(i,j) = Q(i,j) \left( N v(j)^{-1} \right)^{2} + R(i,j),
\]
 with $Q(i,j)$ a positive integer,
  $0 < R(i,j) \leq  \left( N v(j)^{-1} \right)^{2}$.
Hence, we have 
\[
A(i,j) = \left( Q(i,j) N v(j)^{-1}  -R(i,j) \right) 
N v(j)^{-1} + R(i,j) \left( N v(j)^{-1} +1 \right).
\]

Consider the $I \times 2J$ matrix
\[
A' = \left[ (Q(i,j) N v(j)^{-1}  -R(i,j)), \hspace{1cm} R(i,j)  \right].
\]
Let $B_{1}$ be the diagonal $ J \times J$-matrix
with diagonal entries $N v(j)^{-1}, 1 \leq j \leq J$, and 
 let $B_{2}$ be the diagonal matrix
with diagonal entries $N v(j)^{-1} +1, 1 \leq j \leq J$.
 Finally, define
\[
B =\left[ \begin{matrix} B_{1} \\ B_{2} \end{matrix} \right].
\]

It follows from the equations above 
 that $A'B = A$. Letting $\eta'(\varepsilon_{j}) = \sum_{i} A'(i,j) y_{i}$,
 for each $ 1 \leq j \leq 2J$, we have 
 $\eta' \circ B = \eta$.

It is also a simple matter to see that 
\[
Bv = (N, N, \ldots, N, N+v(1), N + v(2), \ldots, N + v(J) )^{T}
\]
and, for all $j$, 
\[
K \leq N \leq N + v(j) \leq sK,
\]
so the last part of the conclusion holds.
\end{proof}

 For each integer $l \geq 2$, the vectors $(l,1), (1,l)$
 form a basis for $\R^{2}$ and it will be convenient 
 to have a notation for the dual basis (written as column vectors):
\[
h_{l}^{1} = (l^{2}-1)^{-1} \left( l, -1 \right)^{T},
 \hspace{1cm} h_{l}^{2} = (l^{2}-1)^{-1} \left(  -1, l \right)^{T}.
 \]
 We note that, if $\sigma, u^{1}, u^{2}$ are as in Theorem \ref{intro:40} so
 that $\sigma(u^{1})=(1,0)$ and  $\sigma(u^{2})=(0,1)$, then 
 $(l,1) = \sigma(l u^{1} + u^{2}), (1,l) = \sigma( u^{1} + lu^{2})$
 and also that right multiplication by 
 the matrices $h_{l}^{1}(l,1)$ and $h_{l}^{2}(1,l)$
 are the projections onto the lines of slope $l^{-1}$ and $l$ along the 
 other. We will shortly be applying these to the elements
 $\sigma \circ \gamma(\varepsilon_{i})$, calling the results
 $p^{1}(i)$ and $p^{2}(i)$.
 
      \begin{tikzpicture}
       \draw (2,2) node[anchor=east] {$(0,0)$};   
      \filldraw (8,3)  node[anchor=west] {$(l,1)$} circle (2pt);
              \filldraw (3,8)  node[anchor=west] {$(1,l)$} circle (2pt);
  \draw (2,2) --  (10,2);
  \draw (2,2) -- (2,10);
   \draw (2,2) --  (8,3); 
   \draw (2,2) --  (3,8);
       \filldraw (5,2.5)  node[anchor=west] {$p^{1}(i)$} circle (2pt);
         \filldraw (2.25,3.5)  node[anchor=west] {$p^{2}(i)$} circle (2pt); 
           \filldraw (5.25,4)  node[anchor=west]
            {$\sigma \circ \gamma(\varepsilon_{i})$} circle (2pt);  
     \draw (5,2.5) --  (5.25,4);  
         \draw (2.25,3.5) --  (5.25,4);            
    \end{tikzpicture}

  For each  integer $l \geq 2$, we define
 subsets of $\R^{2}$
 by 
 \begin{eqnarray*}
 C(l,1) & = &  \{ (x, y) \in \R^{2} \mid 0 < (l-1)y < x <(l+1)y \}, \\
 C(l,2) & = &  \{ (x, y) \in \R^{2} \mid 0 < (l-1)x < y <(l+1)x \}. 
 \end{eqnarray*}
 Observe that these are cones in the first quadrant containing the rays
 of slopes $l^{-1}$ and $l$, respectively. In particular, the former
  contains $(l,1)$ and the latter
 contains $(1,l)$.

      \begin{tikzpicture}
       \draw (2,2) node[anchor=east] {$(0,0)$};   
      \filldraw (8,3)  node[anchor=west] {$(l,1)$} circle (2pt);
          \draw (10,3.2) node[anchor=west] {$C(l,1)$};
              \filldraw (3,8)  node[anchor=west] {$(1,l)$} circle (2pt);
          \draw (3.2,10) node[anchor=south] {$C(l,2)$};
  \draw (2,2) --  (10,2);
  \draw (2,2) -- (2,10);
   \draw (2,2) --  (10,3.7);
  \draw (2,2) -- (10,2.8); 
   \draw (2,2) --  (3.7,10);
  \draw (2,2) -- (2.8,10);   
    \end{tikzpicture}

\vspace{1cm}
We also define $G(l,i) = \{ (0,0) \} \cup \sigma^{-1}(C(l,i))$, 
for $i=1,2$. That is, we have 
\begin{eqnarray*}
 G(l,1) & = &  \{ g \in G \mid 0 < (l-1)\sigma_{2}(g) < \sigma_{1}(g) <(l+1)
 \sigma_{2}(g)  \}, \\
 G(l,2) & = &  \{ g\in G \mid 0 < (l-1)\sigma_{1}(g) < \sigma_{2}(g) <(l+1)
 \sigma_{1}(g) \}.
 \end{eqnarray*}

The same argument as in the proof of Theorem \ref{intro:10} shows that, for
 any $l \geq 2, i=1,2$,
$(G, G(l,i))$ is a simple acyclic dimension group. 
The identity map on $G$ is a positive group homomorphism from
$(G, G(l,i))$ to $(G, G^{+})$. Moreover, if 
$l \geq 2$ is a positive integer, $lu^{1} + u^{2}$ is an order unit
in $(G, G(l,1))$ while $u^{1} + lu^{2}$ is an order unit 
for $(G, G(l,2))$.

We will begin with one of the groups in our inductive system,
 $(\Z^{I}, \Z^{I+})$, along with its canonical map, $\gamma$, into $(G, G^{+})$.
 Our first task will be to factor $\gamma = \gamma^{1} + \gamma^{2}$, where
 $\gamma^{i}:(\Z^{I}, \Z^{I+}) \rightarrow (G, G(l,i))$, for $i=1,2$.
 This will involve some careful estimates. In particular, we will 
 choose a positive integer $m$, let $s = 1 + m^{-1}$ and consider
 a number of estimates of the form $s^{-1}A \leq B \leq sA$.  Our choice
 for $l$ will depend on the parameter $m$.

  \begin{prop}
 \label{proof:40}
 Let $I \geq 1$, $l_{0} \geq 2$ and $ m \geq 2$   be  integers.
 Set  $s = 1 + m^{-1}$. Suppose
 $\gamma: (\Z^{I}, \Z^{I+}) \rightarrow (G, G^{+})$ is a strictly
 positive homomorphism and $v^{1}, v^{2}$ in $\Z^{I}$ are
 such that $\gamma(v^{i}) = u^{i}, i = 1,2$.  Then
  there exists an integer  \newline
  $l \geq l_{0}$, $s^{1}, s^{2}$ in $\Z^{I+}$
   and
   positive homomorphisms \newline
  $\gamma^{1}: (\Z^{I}, \Z^{I+}) \rightarrow (G, G(l,1))$
  and $\gamma^{2}: (\Z^{I}, \Z^{I+}) \rightarrow (G, G(l,2))$
  such that 
  \begin{enumerate}
  \item 
  $\gamma^{1} + \gamma^{2} = \gamma$. 
 \item 
   \[
  s^{-2} \sigma_{j} \circ \gamma(\varepsilon_{i}) < ls^{j}(i)^{-1} 
  <   s \sigma_{j} \circ \gamma(\varepsilon_{i})
   \]
   for $ 1 \leq i \leq I$.
 \item 
Set $w^{1} = lv^{1} + v^{2}$ and $w^{2} = v^{1} + lv^{2}$.  Letting 
  \begin{eqnarray*}
F^{1}  & =   & \{w^{1} , 
 s^{1}(i) \varepsilon_{i} - m w^{1},
  (m+1) w^{1} - s^{1}(i) \varepsilon_{i}, \\
  &  &      w^{1}\pm ml w^{2}   \mid 1 \leq i \leq I \}, \\
 F^{2} & = &  \{w^{2} , 
 s^{2}(i) \varepsilon_{i} - m w^{2},
  (m+1) w^{2} - s^{2}(i) \varepsilon_{i}, \\
  &  &      w^{2}\pm ml w^{1}   \mid 1 \leq i \leq I \},
\end{eqnarray*}
we have $\gamma^{j}(F^{j}) \subseteq G(l,j)$, for $j=1,2$.
\item 
\begin{eqnarray*}
(m+1) \gamma^{1}(w^{1}) - m (lu^{1} + u^{2}) & \in & G(l,1), \\
(m+1)  (lu^{1} + u^{2}) -m  \gamma^{1}(w^{1}) & \in & G(l,1), \\
(m+1) \gamma^{2}(w^{2}) - m (u^{1} + lu^{2}) & \in & G(l,2), \\
(m+1)  (u^{1} + lu^{2}) -m  \gamma^{2}(w^{2}) & \in & G(l,2).
\end{eqnarray*}
\end{enumerate}
  \end{prop}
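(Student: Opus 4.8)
The plan is to construct the splitting $\gamma = \gamma^1 + \gamma^2$ by a ``weighted'' projection argument, using the dual-basis vectors $h_l^1, h_l^2$ introduced above and the interpolation freedom in the dimension group. For each generator $\varepsilon_i$, let $\pi(i) = \sigma\circ\gamma(\varepsilon_i) \in (0,\infty)^2$; its projection onto the ray of slope $l^{-1}$ (along the ray of slope $l$) is $p^1(i) = \bigl(h_l^1 \cdot (l,1)\bigr)\pi(i)$, a positive scalar multiple of $(l,1)$, and similarly $p^2(i)$ is a positive multiple of $(1,l)$, with $p^1(i) + p^2(i) = \pi(i)$. The coefficient of $(l,1)$ in $p^1(i)$ is $h_l^1 \cdot \pi(i) = (l^2-1)^{-1}\bigl(l\sigma_1\gamma(\varepsilon_i) - \sigma_2\gamma(\varepsilon_i)\bigr)$; as $l \to \infty$ this is asymptotic to $l^{-1}\sigma_1\gamma(\varepsilon_i)$, which is exactly the quantity that condition~2 wants $s^j(i)^{-1}$ to approximate. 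So first I would fix $l \geq l_0$ large enough (depending on $m$, $I$, and the numbers $\sigma_j\gamma(\varepsilon_i)$) that the $l \to \infty$ asymptotics make the strict inequalities in condition~2 achievable by a suitable integer choice $s^j(i)$; concretely, one wants $s^j(i)$ chosen so that $s^{-2}\sigma_j\gamma(\varepsilon_i) < l\,s^j(i)^{-1} < s\,\sigma_j\gamma(\varepsilon_i)$, which pins down $s^j(i) \approx l\,(\sigma_j\gamma(\varepsilon_i))^{-1}$ within a multiplicative window of size roughly $s^3$, and this window contains an integer once $l$ is large. One must also verify $s^1(i), s^2(i) \geq$ whatever lower bound is needed for $F^j$ to make sense (they should be positive, and large).

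Next I would actually build $\gamma^1$. The idea: one wants $\gamma^1(\varepsilon_i)$ to be a strictly positive element of $G$ lying in the cone $G(l,1)$, close to $s^1(i)^{-1}\cdot(\text{something along }(l,1))$; equivalently, $\gamma^1(\varepsilon_i)$ should be chosen so that $s^1(i)\gamma^1(\varepsilon_i)$ is close to $w^1 = lv^1 + v^2$ under $\gamma$, in the sense encoded by the Riesz interpolation set $F^1$. The standard device is Riesz interpolation in the dimension group $(G,G^+)$: the finite set of linear inequalities defining membership in $G(l,1)$ for all the elements of $F^1$, together with the requirement $\gamma^1(\varepsilon_i) + \gamma^2(\varepsilon_i) = \gamma(\varepsilon_i)$, can be phrased as asking for an element lying in an intersection of finitely many ``open order intervals'' whose endpoints are explicit positive combinations of $\gamma(\varepsilon_i)$, $u^1$, $u^2$. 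Because the defining inequalities of $G(l,1)$ are strict and, at the level of $\sigma$, the target point $p^1(i)$ lies strictly inside the cone $C(l,1)$ with room of order $l^{-1}$ on each side, these order intervals are genuinely non-empty once $l$ is large; then simplicity of $(G,G^+)$ plus the fact that the affine functions $\tau \mapsto \tau(g)$ are dense (as recalled after Theorem~\ref{intro:20}) lets one realize a lattice point inside. I would do this one $i$ at a time, or all at once by passing far enough out in the inductive system $\Z^{I_m}$ so that all the finitely many required strict inequalities already hold there, mirroring the technique in Propositions~\ref{proof:10} and~\ref{proof:20}. Defining $\gamma^2 = \gamma - \gamma^1$ then automatically gives a group homomorphism; one checks $\gamma^2(\varepsilon_i) \in G(l,2)^+$ because $\sigma\circ\gamma^2(\varepsilon_i) = \pi(i) - \sigma\circ\gamma^1(\varepsilon_i)$ is forced close to $p^2(i)$, which is strictly inside $C(l,2)$.

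Condition~3 is then a bookkeeping check: each element of $F^1$ is a specific integer combination of $w^1$, $\varepsilon_i$, $w^2$, and applying $\sigma\circ\gamma^1$ and using the estimates from condition~2 (i.e.\ $l\,s^1(i)^{-1}$ and $l\,s^2(i)^{-1}$ are within the stated multiplicative windows of $\sigma_1\gamma(\varepsilon_i)$ and $\sigma_2\gamma(\varepsilon_i)$) shows the image lands strictly inside $C(l,1)$; the $w^1 \pm ml\,w^2$ terms are controlled because $\sigma\gamma^1(w^2)$ is of order $l^{-1}$ times $\sigma\gamma(w^2)$ along $(l,1)$ while $\sigma\gamma^1(w^1)$ is of order $1$, so the $\pm ml$ factor is still dominated once $l$ is chosen $\gg m, l_0$. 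Condition~4 is the special case of condition~3 with $\varepsilon_i$ replaced by the order units $lu^1+u^2$ and $u^1+lu^2$ themselves (tracked through $\gamma^1(w^1)$, $\gamma^2(w^2)$), using $\sigma(lu^1+u^2) = (l,1)$ exactly. The main obstacle, and where the real work lies, is the first step: choosing $l$ (as a function of $m, l_0, I$ and the finitely many real numbers $\sigma_j\gamma(\varepsilon_i)$) large enough that \emph{simultaneously} every strict inequality in conditions 2, 3 and 4 has slack — since the cone $C(l,1)$ has opening angle shrinking like $l^{-1}$ while the ``errors'' introduced by rounding $s^j(i)$ to an integer and by the $\pm ml$ cross terms must be made smaller still — and then invoking Riesz interpolation/simplicity to realize the integer points $\gamma^j(\varepsilon_i)$ inside the resulting non-empty order intervals. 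Everything after that is a finite, if tedious, verification of explicit inequalities.
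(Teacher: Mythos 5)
Your overall architecture is the paper's: choose $l$ large via the asymptotics $l\,\sigma\circ\gamma(\varepsilon_i)h_l^j \to \sigma_j\circ\gamma(\varepsilon_i)$, pick integers $s^j(i)$ in the resulting multiplicative window, form the projections $p^1(i), p^2(i)$, and then realize $\gamma^1(\varepsilon_i)$ as an element $g_i \in G$ with $\sigma(g_i)$ close to $p^1(i)$, setting $\gamma^2 = \gamma - \gamma^1$. One remark on mechanism: since $G(l,j) = \{0\} \cup \sigma^{-1}(C(l,j))$, membership in $G(l,j)$ is decided entirely by the image under $\sigma$, so the Riesz interpolation / order-interval machinery you invoke is not needed; the density of $\sigma(G)$ in $\R^2$ (Theorem \ref{intro:20}) together with the openness of the finitely many cone conditions already produces the $g_i$, exactly as the paper does.

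There is, however, a genuine gap in your justification of the containment $\gamma^1(w^1 \pm ml\,w^2) \in G(l,1)$. You argue that ``$\sigma\gamma^1(w^2)$ is of order $l^{-1}$ times $\sigma\gamma(w^2)$ along $(l,1)$,'' so the factor $ml$ is dominated for $l \gg m$. That estimate does not close the argument: an error of size $l^{-1}\cdot m l = m$ transverse to $(l,1)$ vastly exceeds the half-width of $C(l,1)$ near $(l,1)$, which is of order $l^{-1}$ in the $y$-coordinate; moreover, termwise bounds cannot work because $v^1, v^2$ have entries of both signs. The correct reason is an exact cancellation: $(1,l)h_l^1 = 0$, so $\sum_{i'}(v^1(i')+lv^2(i'))\,p^1(i') = (0,0)$ identically, and hence the idealized value of $\sigma\gamma^1(w^1 \pm ml\,w^2)$ is exactly $(l,1) \in C(l,1)$, independent of the sizes of $m$ and $l$. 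The actual value differs from this by $\sum_{i'}(lv^1(i')+v^2(i')\pm ml(v^1(i')+lv^2(i')))(\sigma(g_{i'})-p^1(i'))$, which is made small not by taking $l$ large but by choosing the $g_{i'}$ \emph{after} $l$ and $m$ are fixed, with $\sigma(g_{i'})$ sufficiently close to $p^1(i')$ that all finitely many open containments survive. This is precisely the step where the identity $\sum(v^1(i')+lv^2(i'))\sigma\circ\gamma(\varepsilon_{i'})h_l^1 = 0$ (equation (\ref{eqn:09}) in the paper) is indispensable, and it is also the containment that later drives the key estimate $-B^1(lv^1+v^2) \leq ml\,B^1(v^1+lv^2) \leq B^1(lv^1+v^2)$ in Proposition \ref{proof:50}, so it cannot be waved through. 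The rest of your verification of conditions 2, 3 and 4 is consistent with the paper's proof.
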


 \begin{proof}
For $1 \leq i \leq I$, 
let $\sigma\circ \gamma(\varepsilon_{i})= (x_{i}, y_{i})$. 
By definition of $\sigma_{1}, \sigma_{2}$, 
we  have $\sigma_{1} \circ \gamma( \varepsilon_{i}) = x_{i}$
and $\sigma_{2} \circ \gamma( \varepsilon_{i}) = y_{i}$.
As $\gamma$ is strictly positive, we have that 
$x_{i}, y_{i} > 0$.

Note that for $l > \max\{ x_{i}^{-1}y_{i}, x_{i}y_{i}^{-1}, 2 \}$, we have 
\begin{eqnarray}
  \label{eqn:04}
\sigma \circ \gamma( \varepsilon_{i})h_{l}^{1} & =  & 
\frac{1}{l^{2}-1} (lx_{i} - y_{i})  > 0, \\
 \label{eqn:05}
\sigma \circ \gamma( \varepsilon_{i})h_{l}^{2} & =  & 
\frac{1}{l^{2}-1} (-x_{i} +l y_{i}) > 0. 
\end{eqnarray}

Since for $j=1,2$, 
\begin{eqnarray*}
\lim_{l \rightarrow \infty} l \sigma\circ \gamma(\varepsilon_{i})h^{j}_{l}
   & = & \sigma_{j} \circ \gamma(\varepsilon_{i}) \\
  \lim_{l \rightarrow \infty}  \sigma\circ \gamma(\varepsilon_{i})
  h^{j}_{l}
   & = &  0,
   \end{eqnarray*}
   there exists $l \geq l_{0}$ such that for all $ 1 \leq i \leq I$ and 
   $j=1,2$, we have  
  \begin{eqnarray} 
  \label{eqn:104}
  s^{-1} \sigma_{j} \circ \gamma(\varepsilon_{i}) & < l \sigma \circ \gamma(\varepsilon_{i}) h_{l}^{j}
   < & s\sigma_{j} \circ \gamma(\varepsilon_{i}), \\
   \label{eqn:105}
  0 & <  \sigma \circ \gamma(\varepsilon_{i}) h_{l}^{j}
   < & m^{-1}.
   \end{eqnarray}

 By (\ref{eqn:105}), for all $ 1 \leq i \leq I$ and $j=1,2$, 
    we may find positive integers 
   $s^{j}(i)$ such that
   \begin{eqnarray}
   \label{eqn:06}
   1 <  s^{j}(i) \sigma \circ \gamma(\varepsilon_{i}) h_{l}^{j}
   < s.
   \end{eqnarray}
The estimates given for $s^{j}(i)$ in 
part 2 of the conclusion 
are easy consequences of the inequalities of (\ref{eqn:104}) 
 and (\ref{eqn:06}).

  Recall that 
  \begin{eqnarray*}
(l, 1) h_{l}^{1} & = (1,l) h_{l}^{2} & = 1, \\
(l,1) & = \sigma(lu^{1} + u^{2}) & = \sigma \circ \gamma(lv^{1} + v^{2}),
 \\
 (1,l) & = \sigma(u^{1} + lu^{2}) & = \sigma \circ \gamma(v^{1} + lv^{2}). 
  \end{eqnarray*}
  Then we have 
  \begin{eqnarray}
  (l,1) & = & (l,1) h^{1}_{l} (l,1) \\
  (l,1)    & = & \sigma \circ \gamma(lv^{1} + v^{2}) h_{l}^{1}(l,1) \\
   (l,1)     &  =  &  \sum_{i'=1}^{I} (lv^{1}(i') + v^{2}(i'))   \label{eqn:07}
        \sigma \circ \gamma(\varepsilon_{i'})
      h_{l}^{1}(l,1)
\end{eqnarray} 
 and, in a similar way,
\begin{eqnarray} \label{eqn:08}
(1,l) = \sum_{i'=1}^{I} (v^{1}(i') + lv^{2}(i')) 
        \sigma \circ \gamma(\varepsilon_{i'})
      h_{l}^{2}(l,1).
\end{eqnarray}
For the rest of the 
proof, all sums will be over $1 \leq i' \leq I$.

As $(1,l)h_{l}^{1} = (l,1)h^{2}_{l}=0$, we get in a similar way
\begin{eqnarray}
 0 & = & (1,l)h_{l}^{1} \\
    &  =  &  \sigma \circ \gamma(v^{1} + lv^{2})h_{l}^{1} \\ \label{eqn:09}
    &  =  &  \sum (v^{1}(i') + lv^{2}(i'))  
        \sigma \circ \gamma(\varepsilon_{i'})
      h_{l}^{1}
      \end{eqnarray}
      and 
\begin{eqnarray}
\label{eqn:10}
  0 =     \sum (lv^{1}(i') + v^{2}(i'))  
        \sigma \circ \gamma(\varepsilon_{i'})
      h_{l}^{2}.
      \end{eqnarray}
  
  Note that if $p^{1}$ ($p^{2}$, respectively) denotes the 
  projection of $\R^{2}$ onto $\R(l,1)$ along $\R(1,l)$ 
  (onto $\R(1,l)$ along $\R(l,1)$, respectively), we have 
  \[
  p^{1}(x,y) = (x,y)h_{l}^{1}(l,1), \hspace{1cm}
   p^{2}(x,y) = (x,y)h_{l}^{2}(1,l). 
  \] 
  
To simplify our notation, for $1 \leq i \leq I$, we set
\begin{eqnarray*}
p^{1}(i) & = p^{1}(\sigma \circ \gamma(\varepsilon_{i})) & = 
\sigma \circ \gamma(\varepsilon_{i})h_{l}^{1}(l,1), \\
p^{2}(i) & = p^{2}(\sigma \circ \gamma(\varepsilon_{i})) & 
= \sigma \circ \gamma(\varepsilon_{i})h_{l}^{2}(1,l).
\end{eqnarray*}
By (\ref{eqn:05}), for $1 \leq i \leq I$ and $j=1,2$, 
$p^{j}(x,y)$ lies in $C(l,j)$. 
      
 Now we make the simple observation that for any real numbers $ r > t$, 
 \[
 r(l,1) - t(l,1) \in C(l,1), \hspace{1cm} 
 r(1,l) - t(1,l) \in C(l,2).
 \]

Then it follows from  (\ref{eqn:06}), (\ref{eqn:07})  and    equation
 (\ref{eqn:10}) 
that, for  $1 \leq i \leq I$, we have
\begin{eqnarray*}
  & s^{1}(i)p^{1}(i) -  \sum (lv^{1}(i') + v^{2}(i')) p^{1}(i') & \\
  & s  \sum (lv^{1}(i') + v^{2}(i')) p^{1}(i')  -  s^{1}(i)p^{1}(i) & \\
  &  \sum \left( lv^{1}(i') + v^{2}(i') \pm ml 
(v^{1}(i') + lv^{2}(i'))  \right)p^{1}(i')  &
  \end{eqnarray*}
are in $C(l,1)$.

 Finally,  from (\ref{eqn:07}), it follows easily 
 that
\[
s (l, 1) - \sum (lv^{1}(i') + v^{2}(i')) p^{1}(i') 
\]
 is in $C(l,1)$.
 
 Let us summarize by listing these elements
 \begin{eqnarray*}
 C(l,1) & \supseteq & \{
p^{1}(i),  \\
  & &   s^{1}(i)p^{1}(i) -\sum (lv^{1}(i') + v^{2}(i')) p^{1}(i'),  \\
  & & s\sum (lv^{1}(i') + v^{2}(i')) p^{1}(i') -s^{1}(i)p^{1}(i), \\
  & &   \sum \left( lv^{1}(i') + v^{2}(i') \pm ml
   (v^{1}(i') + l v^{2}(i') ) \right) p^{1}(i'), \\
  & & s (l, 1) - \sum (lv^{1}(i') + v^{2}(i')) p^{1}(i'), \\
 & & s \sum (lv^{1}(i') + v^{2}(i')) p^{1}(i') - (l,1) \\
   &  &  \mid 1 \leq i \leq I \}. 
 \end{eqnarray*}

There is a similar list of elements of $C(l,2)$ involving $p{^2}$
rather than $p^{1}$, which we will not list here. However,
 we know that
\begin{eqnarray*}
p^{1}(i) + p^{2}(i) & = & 
\sigma \circ \gamma(\varepsilon_{i})h_{l}^{1}(l,1)
+ \sigma \circ \gamma(\varepsilon_{i})h_{l}^{2}(1,l)\\
  &  =  &  \sigma \circ \gamma(\varepsilon_{i}).
  \end{eqnarray*}
  Replacing each occurrence of $p^{2}(i)$ by 
  $\sigma \circ \gamma(\varepsilon_{i}) - p^{1}(i)$, 
  we obtain the following list of elements of $C(l,2)$:

 \begin{eqnarray*}
 C(l,2) & \supseteq & \{
(\sigma \circ \gamma(\varepsilon_{i}) - p^{1}(i)),  \\
  & &   s^{2}(i) (\sigma \circ \gamma(\varepsilon_{i}) - p^{1}(i))
    \\
      &  &  -\sum (v^{1}(i') + lv^{2}(i')) 
   (\sigma \circ \gamma(\varepsilon_{i'}) - p^{1}(i')),  \\
  & & s\sum (v^{1}(i') + lv^{2}(i')) 
  (\sigma \circ \gamma(\varepsilon_{i'}) - p^{1}(i')) \\
    &  &  -s^{2}(i) (\sigma \circ \gamma(\varepsilon_{i}) - p^{1}(i)), \\
   & &   \sum \left( v^{1}(i') + lv^{2}(i') \pm ml
   (lv^{1}(i') +  v^{2}(i') ) \right) 
   (\sigma \circ \gamma(\varepsilon_{i'}) - p^{1}(i')), \\
  & & s ( 1, l ) - \sum (v^{1}(i') + lv^{2}(i')) (\sigma \circ \gamma(\varepsilon_{i'}) - p^{1}(i')), \\
 & & s \sum (v^{1}(i') + lv^{2}(i')) (\sigma \circ \gamma(\varepsilon_{i'}) - p^{1}(i')) - (1, l) \\
   &  &  \mid 1 \leq i \leq I \}. 
 \end{eqnarray*}

It follows from our hypotheses and Theorem \ref{intro:20}
 that $\sigma(G)$ is dense
 in $\R^{2}$ and the facts that both $C(l,1)$ and 
$C(l,2)$ are  open to conclude that, for each $1 \leq i \leq I$, we may find
$g_{i}$ in $G$ such that $\sigma(g_{i})$ is sufficiently close
to $p^{1}(i)$ so that if we replace $p^{1}(i)$ by $\sigma(g_{i})$, 
the two containments above remain valid.

We  then define the two homomorphisms $\gamma^{1}$ and 
$\gamma^{2}$ from $\Z^{I}$ to $G$ by setting
$\gamma^{1}(\varepsilon_{i}) = g_{i}$ and 
 $\gamma^{2}(\varepsilon_{i}) = \gamma(\varepsilon_{i}) - g_{i}$, 
 for each $ 1 \leq i \leq I$. 
 The first part of the conclusion  of the proposition
 is immediate.
 The positivity of
 $\gamma^{1}$ and $\gamma^{2}$ is a consequence of the choice
 of $g_{i}$ and the inclusion of $p^{1}(i)$ and 
 $\sigma \circ \gamma(\varepsilon_{i}) - p^{1}(i)$ in 
 the sets $C(l,1)$ and $C(l,2)$, respectively. 
 
 The facts that, for all $1 \leq i \leq I$,
 \begin{eqnarray*}
 s^{1}(i) \sigma \circ \gamma^{1}(\varepsilon_{i}) - 
 \sum (lv^{1}(i') + v^{2}(i')) \sigma \circ \gamma^{1}(\varepsilon_{i'})
 & \in & C(l,1), \\ 
  s\sum (lv^{1}(i') + v^{2}(i')) 
    \sigma \circ \gamma^{1}(\varepsilon_{i'})  - s^{1}(i) 
   \sigma \circ \gamma^{1}(\varepsilon_{i})& \in & C(l,1) 
 \end{eqnarray*}
 imply that 
  \begin{eqnarray*}
 s^{1}(i) \gamma^{1}(\varepsilon_{i}) - 
  \gamma^{1}(lv^{1} + v^{2})
 & \in & G(l,1), \\ 
    (m+1)  \gamma^{1}(lv^{1} + v^{2} )  - m s^{1}(i) 
     \gamma^{1}(\varepsilon_{i})& \in & G(l,1). 
 \end{eqnarray*}
 
 Further, the elements
\[
\sum \left( lv^{1}(i') + v^{2}(i') \pm ml
   (v^{1}(i') +  lv^{2}(i') ) \right) 
   \sigma \circ
    \gamma^{1}(\varepsilon_{i'}) \in C(l,1)
\] 
 imply that 
$\gamma^{1}((lv^{1}+v^{2})  \pm m (v^{1} + lv^{2}))$
 are in $G(l,1)$. This completes the proof that
  $\gamma^{1}(F^{1}) \subseteq G(l,1)$.
 The inclusion $\gamma^{2}(F^{2}) \subseteq G(l,2)$ 
 is done in a similar way.
 
 The last set of assertions of the proposition
  follows from the containments in $C(l,1)$
  and $C(l,2)$ we have and  the facts that 
 $\sigma(lu^{1} + u^{2})=(l,1)$, \newline 
 $\sigma(u^{1} + lu^{2})=(1,l)$.
\end{proof}

\begin{prop}
\label{proof:50}
 Let $I \geq 1,  m \geq 2,  a \geq 1$
  and $K_{0} \geq 1$
   be   integers and let $l_{0} > m$.
 Suppose
 $\gamma: (\Z^{I}, \Z^{I+}) \rightarrow (G, G^{+})$ is a 
 strictly
 positive homomorphism and $v^{1}, v^{2}$ in $\Z^{I}$ are
 such that $\gamma(v^{j}) = u^{j}, j = 1,2$.
 Also, let $l \geq l_{0},
  \gamma^{1}, \gamma^{2}, s^{1}, s^{2}, F^{1}$ and $ F^{2}$ 
 be as in  Proposition \ref{proof:40}.
 Then there exist  positive integers 
 $J, K \geq K_{0}$, $J \times I$-matrices $B^{1}, B^{2}$
 with positive integer entries
 and positive homomorphisms 
 $\eta^{1}: (\Z^{J}, \Z^{J+}) \rightarrow (G, G(l,1)),
 \eta^{2}: (\Z^{J}, \Z^{J+}) \rightarrow (G, G(l,2))$
 such that, using $s = 1 + m^{-1}$,
 \begin{enumerate}
 \item $\eta^{1} \circ B^{1} = 
 \gamma^{1}, \eta^{2} \circ B^{2} = \gamma^{2}$,
 \item 
   \[
l K  \leq (B^{1}(lv^{1}+v^{2}))(j), (B^{2}(v^{1}+lv^{2}))(j) \leq  slK, 
 \]
 for all $ 1 \leq j \leq J$.
 \item 
  \begin{eqnarray*}
  a  & \leq 
  s^{-2} m  K \sigma_{1} \circ \gamma(\varepsilon_{i})   
 \leq  &  B^{1}(j,i),  \\ 
   a  & \leq 
  s^{-2} m  K \sigma_{2} \circ \gamma(\varepsilon_{i})   
  \leq  &  B^{2}(j,i), 
  \end{eqnarray*}
  and 
  \begin{eqnarray*} 
 B^{1}(j,i) &   \leq  &
 s^{2} (m+1)  K \sigma_{1} \circ \gamma(\varepsilon_{i})
 = s^{3}m K \sigma_{1} \circ \gamma(\varepsilon_{i}),  \\
 B^{2}(j,i)  & \leq  &
 s^{2} (m+1)  K \sigma_{2} \circ \gamma(\varepsilon_{i})
 = s^{3} m  K \sigma_{2} \circ \gamma(\varepsilon_{i}), 
     \end{eqnarray*}
for all $1 \leq i \leq I, 1 \leq j \leq J$, 
     and 
     \item 
  \begin{eqnarray*}
    K \leq & 
  (B^{1}v^{1})(j),  (B^{2}v^{2})(j) &  \leq  s^{2}  K, \\
   s^{-2}K \leq & 
  -l(B^{1}v^{2})(j),  -l(B^{2}v^{1})(j) &  \leq  s^{3}  K,
\end{eqnarray*}
for all $1 \leq j \leq J$.
\end{enumerate}
\end{prop}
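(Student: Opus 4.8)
The plan is to prove the assertions for $\gamma^{1}$; those for $\gamma^{2}$ follow by the symmetric argument, interchanging $\sigma_{1}\leftrightarrow\sigma_{2}$, $v^{1}\leftrightarrow v^{2}$, $w^{1}=lv^{1}+v^{2}\leftrightarrow w^{2}=v^{1}+lv^{2}$, $G(l,1)\leftrightarrow G(l,2)$, $h_{l}^{1}\leftrightarrow h_{l}^{2}$ and $F^{1}\leftrightarrow F^{2}$, and at the end one takes $J$ and $K$ to be the values common to the two runs. The idea is to ``thicken'' the strictly positive homomorphism $\gamma^{1}\colon(\Z^{I},\Z^{I+})\to(G,G(l,1))$ into a composite $\eta^{1}\circ B^{1}$ in which the matrix $B^{1}$ has very large, tightly prescribed entries. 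The inputs available are: $(G,G(l,1))$ is a simple acyclic dimension group with order unit $lu^{1}+u^{2}$; the functional $h_{l}^{1}\circ\sigma$ is a positive functional on $(G,G(l,1))$ which sends $lu^{1}+u^{2}$ to $1$; from Proposition~\ref{proof:40}, the elements of $F^{1}$, which in the order of $(G,G(l,1))$ say that $\gamma^{1}(w^{1})\le s^{1}(i)\gamma^{1}(\varepsilon_{i})\le s\,\gamma^{1}(w^{1})$ for every $i$ and that $\gamma^{1}(w^{1})\pm ml\,\gamma^{1}(w^{2})\in G(l,1)$, together with the estimates on $s^{1}(i)$ in part~2 of that proposition; and the bookkeeping identities $\sum_{i}v^{1}(i)\,\sigma_{1}\!\circ\!\gamma(\varepsilon_{i})=\sigma_{1}(u^{1})=1$ and $\sum_{i}v^{2}(i)\,\sigma_{1}\!\circ\!\gamma(\varepsilon_{i})=\sigma_{1}(u^{2})=0$.

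First I would fix an integer $n$ that is enormous compared with $K_{0}$, $a$, $m$, $l$ and the sizes of the integer vectors $v^{1},v^{2},s^{1}$, and apply Proposition~\ref{proof:10} to $(G,G(l,1))$ with the strictly positive elements $\gamma^{1}(\varepsilon_{1}),\dots,\gamma^{1}(\varepsilon_{I})$ and this $n$. This produces $I_{1}\ge 1$, strictly positive $y_{1},\dots,y_{I_{1}}$ in $G(l,1)$, and a non-negative integer matrix $A$ with $A(k,i)\ge n$ for all $k,i$ and $\gamma^{1}(\varepsilon_{i})=\sum_{k}A(k,i)y_{k}$. Since every entry of $A$ is at least $n$, each $y_{k}$ is ``$\tfrac1n$-small'' relative to every $\gamma^{1}(\varepsilon_{i})$, and this is the slack spent below. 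I would then choose $K\ge K_{0}$ large (divisible by a convenient product of the $s^{1}(i)$, so that the intended approximate values are integers) and, in the spirit of the proof of Proposition~\ref{proof:20}, factor $A$ as $A=DB^{1}$ with $D$ a non-negative integer $I_{1}\times J$ matrix and $B^{1}$ a $J\times I$ matrix whose columns are prescribed: each entry $B^{1}(j,i)$ lies in the window of condition~3 (which, for $K$ large, automatically forces $B^{1}(j,i)\ge a$), and the entries are chosen within those windows so as to make the mixed-sign combinations $B^{1}v^{1}$, $B^{1}v^{2}$ and $B^{1}w^{1}$ take the values demanded by conditions~2 and~4. The discrepancies between the chosen integer entries and their ideal real values are absorbed by perturbing $D$ by combinations of the tiny $y_{k}$'s; this is exactly where the hugeness of $n$ is used. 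Setting $\eta^{1}(\varepsilon_{j})=\sum_{k}D(k,j)y_{k}$ then gives a homomorphism $\eta^{1}\colon(\Z^{J},\Z^{J+})\to(G,G(l,1))$ that is positive (each $y_{k}$ is in $G(l,1)$), strictly positive (no column of $D$ is zero), and satisfies $\eta^{1}\circ B^{1}=\gamma^{1}$, which is condition~1; condition~3 holds by construction.

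Conditions~2 and~4 are the substance of the proposition, since they concern $B^{1}$ applied to the \emph{non-positive} vectors $v^{1},v^{2},w^{1}$. The mechanism for arranging them is to use, in $\R^{2}$, the identities $(l^{2}-1)v^{1}=lw^{1}-w^{2}$ and $(l^{2}-1)v^{2}=lw^{2}-w^{1}$, so that $(B^{1}v^{1})(j)$ and $(B^{1}v^{2})(j)$ are determined by $(B^{1}w^{1})(j)$ and $(B^{1}w^{2})(j)$; one then controls the latter two by applying the positive functional $h_{l}^{1}\circ\sigma$ to the $F^{1}$-relations. The sandwich $\gamma^{1}(w^{1})\le s^{1}(i)\gamma^{1}(\varepsilon_{i})\le s\,\gamma^{1}(w^{1})$, the estimates on $s^{1}(i)$, and the identities recalled above pin $(B^{1}w^{1})(j)$ into the interval $[lK,slK]$ of condition~2, while the relations $\gamma^{1}(w^{1})\pm ml\,\gamma^{1}(w^{2})\in G(l,1)$ force $|(B^{1}w^{2})(j)|$ to be of order $K/m$, hence small compared with $K$. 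Substituting back into $(l^{2}-1)(B^{1}v^{1})(j)=l(B^{1}w^{1})(j)-(B^{1}w^{2})(j)$ and $(l^{2}-1)(B^{1}v^{2})(j)=l(B^{1}w^{2})(j)-(B^{1}w^{1})(j)$, and using $l\ge l_{0}>m$ so that $l^{2}/(l^{2}-1)$ is close to $1$, gives $(B^{1}v^{1})(j)\approx K$ and $-l(B^{1}v^{2})(j)\approx K$, which is condition~4. Carrying the factors $s=1+m^{-1}$ correctly through these estimates is the routine but lengthy part.

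The step I expect to be the main obstacle is the one buried in the second and third paragraphs: simultaneously meeting the entrywise windows of condition~3 and the combination constraints of conditions~2 and~4, while keeping $A=DB^{1}$ with $D$ non-negative. Unlike the single row-sum controlled in Proposition~\ref{proof:20}, here several linear functionals of the rows of $B^{1}$ must be controlled at once --- including ones with mixed signs whose ideal value ($\pm K/l$) is much smaller than the typical entry --- so the errors coming from rounding and from the ``$\tfrac1n$-small'' adjustments must be forced to be genuinely below $K/l$. This is precisely why $F^{1}$ was engineered in Proposition~\ref{proof:40} to contain the relations $w^{1}\pm mlw^{2}$ and the $s^{1}(i)\varepsilon_{i}$-sandwiches: they supply just enough rigidity for the projection $h_{l}^{1}\circ\sigma$ to do the pinning. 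Running the symmetric construction for $\gamma^{2}$ with $\sigma_{2}$, $v^{2}$, $w^{2}$, $G(l,2)$, $h_{l}^{2}\circ\sigma$ and $F^{2}$, and enlarging the index sets so that both use a common $J$ and a common $K\ge K_{0}$, then yields $J$, $K$, $B^{1}$, $B^{2}$, $\eta^{1}$, $\eta^{2}$ with all four conclusions.
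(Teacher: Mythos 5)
There is a genuine gap, and it sits exactly where you flag ``the main obstacle'': the factorization $\gamma^{1}=\eta^{1}\circ B^{1}$ with the required properties is never actually constructed. Your plan is to choose the entries $B^{1}(j,i)$ inside the windows of condition~3 and \emph{simultaneously} force the mixed-sign combinations $B^{1}v^{1}$, $B^{1}v^{2}$, $B^{1}w^{1}$ to take prescribed values, all while keeping $A=DB^{1}$ with $D$ a non-negative integer matrix. Nothing in Propositions~\ref{proof:10} or~\ref{proof:20} licenses prescribing a whole matrix this way: Proposition~\ref{proof:20} controls only the single vector $Bv$, and its proof depends on $B$ being a stack of two diagonal matrices, so the ``absorb the discrepancies by perturbing $D$'' step is not justified. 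A second, related imprecision: you propose to control $(B^{1}w^{1})(j)$ and $(B^{1}w^{2})(j)$ by ``applying the positive functional $h_{l}^{1}\circ\sigma$ to the $F^{1}$-relations,'' but that only yields inequalities among the real numbers $h_{l}^{1}\sigma\gamma^{1}(\varepsilon_{i})$; to pin the integer entries of $B^{1}$ you need the vector inequalities $B^{1}(F^{1})\subseteq\Z^{J+}$, and these must be arranged when the factorization is chosen, which your construction does not do.

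The paper's proof inverts your logic and thereby avoids the obstacle: nothing is prescribed, everything is derived. First, since $F^{1}$ is finite and $\gamma^{1}(F^{1})\subseteq G(l,1)$, a deep enough stage of an inductive-limit presentation of the simple acyclic group $(G,G(l,1))$ gives a factorization $\gamma^{1}=\eta^{1}\circ B^{1}$ in which $B^{1}(F^{1})$ consists of positive vectors (and $J$ is padded by duplicating rows and splitting generators so that the same $J$ serves $B^{2}$). Second, Proposition~\ref{proof:20} is applied to the single order unit $v=B^{1}(lv^{1}+v^{2})$ and $B^{1}$ is replaced by $B\cdot B^{1}$; since $B\geq 0$ this preserves $B^{1}(F^{1})\geq 0$ and delivers condition~2. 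Conditions~3 and~4 are then consequences of these two facts alone: $mB^{1}w^{1}\leq s^{1}(i)B^{1}\varepsilon_{i}\leq(m+1)B^{1}w^{1}$ and $-B^{1}w^{1}\leq mlB^{1}w^{2}\leq B^{1}w^{1}$, combined with $lK\leq(B^{1}w^{1})(j)\leq slK$ and the estimate $s^{-2}\sigma_{1}\circ\gamma(\varepsilon_{i})<ls^{1}(i)^{-1}<s\,\sigma_{1}\circ\gamma(\varepsilon_{i})$ from Proposition~\ref{proof:40}, give exactly the stated bounds; your identities $(l^{2}-1)v^{1}=lw^{1}-w^{2}$ and $(l^{2}-1)v^{2}=lw^{2}-w^{1}$ are indeed how condition~4 is finished. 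So your closing estimates are on the right track, but without the two-step factorization (qualitative positivity of $B^{1}(F^{1})$ first, then normalization of the one vector $B^{1}w^{1}$) the argument does not go through.
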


\begin{proof}
We first observe that, by
 hypotheses, $\gamma(\varepsilon_{i})$ is strictly positive, for all
$1 \leq i \leq I$, and it follows that, for $j=1,2$, 
$\sigma_{j} \circ \gamma(\varepsilon_{i}) $ is a  strictly positive
real number.

As $(G, G(l,1))$ and $(G, G(l,2))$ are simple acyclic dimension groups, 
the maps $\gamma^{1}, \gamma^{2}$ can be factored as in part 1 
of the conclusion
in such a way that $B^{1}(F^{1}), B^{2}(F^{2})$ consist
 of positive vectors, at least for 
possibly different values of $J$. However, the 
value of $J$ may be increased
as follows. Since $(G, G(l,1))$ is a simple acyclic 
dimension group, we 
may find $ 0 < g < \eta^{1}(\varepsilon_{J})$. 
We replace the matrix $B^{1}$ by a $J+1 \times I$ matrix whose first
$J$ rows are the same as those in $B^{1}$ and whose row $J+1$  is the same
as row $J$ of $B^{1}$. In other words, we duplicate the last row.
We also replace the function $\eta^{1}$ by the function which
sends $\varepsilon_{j}$ to $ \eta^{1}(\varepsilon_{j})$, for
all $j < J$, sends $\varepsilon_{J}$ to $g$ and sends
 $\varepsilon_{J+1}$ to $\eta^{1}(\varepsilon_{J}) -g$. It is an 
 easy matter to check this new matrix and function satisfy 
 the desired conclusion.
From this, we see that the same $J$ may be 
used for both $\eta^{1}, B^{1}$ and 
$\eta^{2}, B^{2}$. 

Applying Proposition \ref{proof:20} to 
$\eta^{1}, v = B^{1}(lv^{1} + v^{2})$ and $s=  1 + m^{-1}$
 produces $\eta'$  and $B$. We will now let $\eta^{1}$ be $\eta'$ and 
 let $B^{1}$ be $B \cdot B^{1}$. This also changes $J$ to $2J$, but we will continue to use $J$.
 We also apply Proposition \ref{proof:20} to 
 $\eta^{2}, v = B^{2}(v^{1} + lv^{2})$ and $s=  1 + m^{-1}$. 
The  $K$ given by  Proposition \ref{proof:20}
may be chosen arbitrarily
 large in each case, so we will choose the same $K$ for both. In addition, 
 we will choose it to be a multiple of $l$; that is, we will have  $lK$ satisfying the conclusion for some $K$. Moreover, we may
 choose this so that $K \geq K_{0}$ in our hypothesis and
\[
K \sigma_{j} \circ \gamma(\varepsilon_{i})^{-1} \geq \frac{9a}{8}
\]
for all $ j=1,2, 1 \leq i \leq I$. 

  Then the first inequality of
 the third condition will follow from the facts that
  $K \geq K_{0}$ and
 $\frac{8}{9} \leq m s^{-2}$.

 We can now assume that 
 \[
 lK  \leq (B^{1}(lv^{1}+v^{2}))(j), (B^{2}(v^{1}+lv^{2}))(j) \leq  slK, 
 \]
 for all $ 1 \leq j \leq J$.
 Henceforth, we focus on $\eta^{1}, B^{1}$ as the other case is similar.
 
 For $1 \leq i \leq I$, we know 
 $(m+1)(lv^{1}+v^{2}) - s^{1}(i)\varepsilon_{i}$ 
 is in $F^{1}$, so,
 for $ 1 \leq j \leq J$, we have
 \begin{eqnarray*}
 B^{1}(j,i) & = & (B^{1}\varepsilon_{i})(j) \\
     &  \leq   &  s^{1}(i)^{-1} (m+1) B^{1}(lv^{1}+v^{2})(j) \\
     & \leq & s^{1}(i)^{-1} (m+1) s lK \\
     &  \leq & s^{2}(m+1)  K \sigma_{1} \circ \gamma(\varepsilon_{i}) \\
     & = & s^{3}mK\sigma_{1} \circ \gamma(\varepsilon_{i}) 
     \end{eqnarray*}
     using part 2 of Proposition \ref{proof:40}   and the fact that
     $m+1 = m(1 + m^{-1}) = ms$.
     Similarly, we have 
 \begin{eqnarray*}
 B^{1}(j,i) & = &  (B^{1}\varepsilon_{i})(j) \\
     &  \geq   & s^{1}(i)^{-1} m B^{1}(lv^{1}+v^{2})(j) \\
   & \geq & s^{1}(i)^{-1} m l K \\
    &  \geq & s^{-2} m  K \sigma_{1} \circ \gamma(\varepsilon_{i}).
   \end{eqnarray*} 
   With  similar computations for $B^{2}$, the
    third part of the conclusion holds.

   As $B^{1}(F^{1})$ consists of positive vectors, we have 
   \begin{eqnarray*}
   -B^{1}(l v^{1} + v^{2}) & \leq   ml B^{1}( v^{1} + lv^{2})
  \leq & B^{1}(l v^{1} + v^{2}). 
  \end{eqnarray*}
  Using the fact that $B^{1}(l v^{1} + v^{2})$ is positive, we have 
  \begin{eqnarray*}
  B^{1}v^{1} & = & (l^{2}-1)^{-1} \left( l B^{1}(l v^{1} + v^{2}) 
   - B^{1}( v^{1} + l v^{2}) \right) \\
   & \leq & (l^{2}-1)^{-1} \left( l B^{1}(l v^{1} + v^{2}) 
     + m^{-1}l^{-1}  B^{1}(l v^{1} + v^{2}) \right) \\
     & = & (l^{2}-1)^{-1}  (l + m^{-1}l^{-1}) B^{1}(l v^{1} + v^{2}) \\
      &  \leq  &  (1 - l^{-2})^{-1} (1 + m^{-1}l^{-2})  s K \\
       & \leq & s^{2}K,
       \end{eqnarray*}
  as $(1-l^{-2})^{-1}(1 + m^{-1}l^{-2})\leq s$ 
  for $l > m$.
  
  We also have 
   \begin{eqnarray*}
  B^{1}v^{1} & = & (l^{2}-1)^{-1} \left( l B^{1}(l v^{1} + v^{2}) 
   - B^{1}( v^{1} + l v^{2}) \right) \\
   & \geq & (l^{2}-1)^{-1} \left( l B^{1}(l v^{1} + v^{2}) 
     - m^{-1}l^{-1}  B^{1}(l v^{1} + v^{2}) \right) \\
     & = & (l^{2}-1)^{-1}  (l - m^{-1}l^{-1}) B^{1}(l v^{1} + v^{2}) \\
      &  \geq  &  (1 - l^{-2})^{-1} (1 - m^{-1}l^{-2})   K \\
       & \geq & K.
       \end{eqnarray*}
       
       In a similar way, we compute
  \begin{eqnarray*}
  B^{1}v^{2} & = & (l^{2}-1)^{-1} \left( l B^{1}( v^{1} + l v^{2})
  -  B^{1}(l v^{1} + v^{2}) 
    \right) \\ 
      &  \leq  &  (l^{2}-1)^{-1} \left(  m^{-1} B^{1}(l v^{1} + v^{2})   
       -  B^{1}(l v^{1} + v^{2}) 
    \right) \\
      &  =  &  -(l^{2}-1)^{-1}(1 - m^{-1}) B^{1}(l v^{1} + v^{2}) \\
      &  \leq   &  - (1 - l^{-2})^{-1} (1 - m^{-1}) s l^{-1} K \\
      & \leq & -s^{-2} l^{-1}K 
      \end{eqnarray*}
      since $s^{2}(1 - m^{-1}) \geq 1$, for $m \geq 2$.

Finally, we have 
 \begin{eqnarray*}
  B^{1}v^{2} & = & (l^{2}-1)^{-1} \left( l B^{1}( v^{1} + l v^{2})
  -  B^{1}(l v^{1} + v^{2}) 
    \right) \\ 
      &  \geq  &  (l^{2}-1)^{-1} \left( - m^{-1} B^{1}(l v^{1} + v^{2})   
       -  B^{1}(l v^{1} + v^{2}) 
    \right) \\
      &  =  &  -(l^{2}-1)^{-1}(1 + m^{-1}) B^{1}(l v^{1} + v^{2}) \\
      &  \geq   &  - (1 - l^{-2})^{-1} s^{2} l^{-1}K \\
        &  \geq   &  -  s^{3} l^{-1}K
      \end{eqnarray*}
     as $( 1- l^{-2})^{-1} \leq s$ for $l > m$.
\end{proof}

We now return to the inductive system given at the start of this section
 for
the group $(G, G^{+})$. We will choose $m \geq 2$ and $l_{0} > m$  and consider $n \geq 1$, $I = I_{n}$, 
$v^{1} = u_{n}^{1}, v^{2} = u_{n}^{2}$
and $\gamma = \gamma_{n}$. 
We first apply Proposition \ref{proof:40} to obtain $l, s^{1}, s^{2},
\gamma^{1}, \gamma^{2}$. We then apply Proposition \ref{proof:50}
to obtain $J, K, B^{1}, B^{2}, \eta^{1}$ and $\eta^{2}$. All of this will be assumed
without explicit statement for the rest of the section.

\begin{prop}
\label{proof:60}
There exist $\tilde{n} > n$,  $I_{\tilde{n}} \times J$-matrices
$C^{1}, C^{2}$ with positive integer entries such that
\begin{enumerate}
\item 
\[
C^{1}B^{1}+ C^{2}B^{2} =  A_{\tilde{n}-1} \cdots A_{n}.
\]
\item For all $ 1 \leq \tilde{i} \leq I_{\tilde{n}}$, we have
\begin{eqnarray*}
s^{-1} \left( lu^{1}_{\tilde{n}}(\tilde{i}) + u^{2}_{\tilde{n}}(\tilde{i}) \right) & \leq  
 \left( C^{1}B^{1}(l v^{1} + v^{2}) \right)(\tilde{i}) \leq & 
 s \left( lu^{1}_{\tilde{n}}(\tilde{i}) + 
 u^{2}_{\tilde{n}}(\tilde{i}) \right), \\ 
 s^{-1} \left(  u^{1}_{\tilde{n}}(\tilde{i}) + 
 lu^{2}_{\tilde{n}}(\tilde{i}) \right)
 & \leq  
 \left( C^{2}B^{2}( v^{1} + lv^{2}) \right)(\tilde{i}) 
    \leq & s \left( u^{1}_{\tilde{n}}(\tilde{i})
     + lu^{2}_{\tilde{n}}(\tilde{i}) 
    \right).
\end{eqnarray*}
\end{enumerate}
\end{prop}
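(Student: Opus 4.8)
Here is how I would attack Proposition \ref{proof:60}. The plan is to obtain $C^{1},C^{2}$ by \emph{lifting} the maps $\eta^{1},\eta^{2}$ to a sufficiently late stage of the given inductive system $(\Z^{I_{m}},A_{m})$ and then telescoping. The only inputs used are Proposition \ref{proof:50}(1) (that $\eta^{j}\circ B^{j}=\gamma^{j}$) and Proposition \ref{proof:40}(4); the size estimates of Proposition \ref{proof:50} play no role here. I write $A_{[m,m')}=A_{m'-1}\cdots A_{m}$, and use repeatedly that ``telescoping from stage $m'$ to a later stage $m''$'' means left--multiplying the relevant integer matrices by $A_{[m',m'')}$, a non--negative matrix with no zero row (no sinks); such a multiplication preserves strict positivity of matrix entries, preserves any identity between integer matrices, preserves any componentwise inequality already in hand, and intertwines $\gamma_{m'}$ with $\gamma_{m''}$.

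First I would lift $\eta^{1},\eta^{2}$. Since $G(l,j)\subseteq G^{+}$, every $\eta^{j}(\varepsilon_{k})$ is a strictly positive element of $(G,G^{+})$; applying Proposition \ref{proof:10} to the $2J$ strictly positive elements $\{\eta^{1}(\varepsilon_{k})\}_{k}\cup\{\eta^{2}(\varepsilon_{k})\}_{k}$ (with the parameter taken to be $1$, and telescoping further so that the resulting stage exceeds $n$) yields an index $\tilde{n}>n$ and $I_{\tilde{n}}\times J$ matrices $C^{1},C^{2}$ with strictly positive integer entries such that $\gamma_{\tilde{n}}\circ C^{1}=\eta^{1}$ and $\gamma_{\tilde{n}}\circ C^{2}=\eta^{2}$. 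Now both $C^{1}B^{1}+C^{2}B^{2}$ and $A_{[n,\tilde{n})}$ are $I_{\tilde{n}}\times I_{n}$ integer matrices whose composites with $\gamma_{\tilde{n}}$ both equal $\eta^{1}B^{1}+\eta^{2}B^{2}=\gamma^{1}+\gamma^{2}=\gamma_{n}$. Hence each column of their difference lies in $\ker\gamma_{\tilde{n}}$, so after telescoping to a suitable later stage the difference is annihilated; absorbing the connecting matrix into $C^{1},C^{2}$ and renaming $\tilde{n}$, we arrange $C^{1}B^{1}+C^{2}B^{2}=A_{\tilde{n}-1}\cdots A_{n}$, with $C^{1},C^{2}$ still strictly positive and still satisfying $\gamma_{\tilde{n}}\circ C^{j}=\eta^{j}$. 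This is conclusion 1.

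For conclusion 2, put $w^{1}=lv^{1}+v^{2}$ and $w^{2}=v^{1}+lv^{2}$, so that (using $v^{j}=u_{n}^{j}$ and $A_{m}u_{m}^{j}=u_{m+1}^{j}$) one has $A_{[n,\tilde{n})}w^{1}=lu_{\tilde{n}}^{1}+u_{\tilde{n}}^{2}$, $A_{[n,\tilde{n})}w^{2}=u_{\tilde{n}}^{1}+lu_{\tilde{n}}^{2}$, while $\gamma_{\tilde{n}}(C^{1}B^{1}w^{1})=\eta^{1}(B^{1}w^{1})=\gamma^{1}(w^{1})$, $\gamma_{\tilde{n}}(A_{[n,\tilde{n})}w^{1})=\gamma_{n}(w^{1})=lu^{1}+u^{2}$, and symmetrically with the index $2$. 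By Proposition \ref{proof:40}(4) the four elements $(m+1)\gamma^{1}(w^{1})-m(lu^{1}+u^{2})$, $(m+1)(lu^{1}+u^{2})-m\gamma^{1}(w^{1})$, $(m+1)\gamma^{2}(w^{2})-m(u^{1}+lu^{2})$, $(m+1)(u^{1}+lu^{2})-m\gamma^{2}(w^{2})$ lie in $G(l,1)$ or $G(l,2)$, hence in $G^{+}$; moreover each is the $\gamma_{\tilde{n}}$--image of the evident integer vector obtained by replacing $\gamma^{1}(w^{1})$ with $C^{1}B^{1}w^{1}$, $\gamma^{2}(w^{2})$ with $C^{2}B^{2}w^{2}$, $lu^{1}+u^{2}$ with $A_{[n,\tilde{n})}w^{1}$, and $u^{1}+lu^{2}$ with $A_{[n,\tilde{n})}w^{2}$. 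Since a positive element of $G$ presented by an integer vector at stage $\tilde{n}$ becomes a componentwise non--negative vector after telescoping far enough, I pass to one further stage (chosen to handle all four vectors at once and compatibly with the previous step), once more absorbing the connecting matrix into $C^{1},C^{2}$, so that $(m+1)C^{1}B^{1}w^{1}-mA_{[n,\tilde{n})}w^{1}$, $(m+1)A_{[n,\tilde{n})}w^{1}-mC^{1}B^{1}w^{1}$, and the two analogues for $C^{2}B^{2}w^{2}$ and $A_{[n,\tilde{n})}w^{2}$ are all componentwise $\ge 0$. Because $s=1+m^{-1}=\tfrac{m+1}{m}$, the first two inequalities read $s^{-1}(lu_{\tilde{n}}^{1}(\tilde{i})+u_{\tilde{n}}^{2}(\tilde{i}))\le (C^{1}B^{1}(lv^{1}+v^{2}))(\tilde{i})\le s(lu_{\tilde{n}}^{1}(\tilde{i})+u_{\tilde{n}}^{2}(\tilde{i}))$ for all $\tilde{i}$, and the other two give the matching bounds for $C^{2}B^{2}(v^{1}+lv^{2})$; this is conclusion 2.

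The step I expect to require the most care is not an analytic one but the bookkeeping of the three successive telescopings: Step 1's positive factorization $\gamma_{\tilde{n}}\circ C^{j}=\eta^{j}$, Step 2's matrix identity, and Step 3's four sign conditions must all be realized at one common final stage, and one must check that each later telescoping (left multiplication by a single connecting matrix) leaves intact everything secured earlier — which is precisely why the argument is run with integer matrices rather than merely with homomorphisms into $G$. There is no genuine quantitative obstacle: all the numerical content of Proposition \ref{proof:60} is just Proposition \ref{proof:40}(4) transported to the later stage $\tilde{n}$.
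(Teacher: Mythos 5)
Your proof is correct and follows essentially the same route as the paper: factor $\eta^{1},\eta^{2}$ positively through a later stage as $\gamma_{\tilde n}\circ C^{i}$, telescope until the matrix identity $C^{1}B^{1}+C^{2}B^{2}=A_{\tilde n-1}\cdots A_{n}$ holds, and then telescope once more so that the four positive elements from Proposition \ref{proof:40}(4) are represented by componentwise non-negative integer vectors, which is exactly conclusion 2 since $s=(m+1)/m$. The additional bookkeeping you supply (invoking Proposition \ref{proof:10} to get strictly positive entries in $C^{1},C^{2}$, and checking that successive telescopings preserve what was already secured) only makes explicit what the paper leaves implicit.
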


\begin{proof}
The positive group homomorphisms 
$\eta^{i}: (\Z^{J}, \Z^{J+}) \rightarrow (G, G^{+})$
can be factored through the inductive limit as 
$\eta^{i} = \gamma_{\tilde{n}} \circ C^{i}$, with $C^{i}$ a
positive $I_{\tilde{n}} \times J$ matrix, for $i = 1,2$.
We have  
\[
 \gamma_{\tilde{n}} \circ (C^{1}B^{1}+ C^{2}B^{2} ) = 
\eta^{1} \circ B^{1} + \eta^{2} \circ B^{2} = 
\gamma^{1} + \gamma^{2} = \gamma_{n}
= \gamma_{\tilde{n}} \circ   A_{\tilde{n}-1} \cdots A_{n}.
\]
As $\Z^{I}$ is finitely generated, the $\tilde{n}$ may be chosen 
sufficiently large so that  $C^{1}B^{1}+ C^{2}B^{2} =
 A_{\tilde{n}-1} \cdots A_{n}$  holds.

For the second, we know from part 4 of 
 Proposition \ref{proof:40} that 
 \[
 (m+1) \gamma^{1}(lv^{1} + v^{2}) - m (lu^{1} + u^{2})  
 \in  G(l,1) \subseteq G^{+}.
 \]
 We also have 
\begin{eqnarray*}
  &  (m+1) \gamma^{1}(lv^{1} + v^{2}) - m (lu^{1} + u^{2})   &  \\
  =   &
 \gamma_{\tilde{n}} \left( (m+1) C^{1}B^{1}(lv^{1} + v^{2}) - 
m (lu_{\tilde{n}}^{1} + u_{\tilde{n}}^{2}) \right) &
\end{eqnarray*}
and so we may also choose $\tilde{n}$ sufficiently large so
that the positivity holds in $\Z^{I_{\tilde{n}}}$. This establishes the
first inequality and the others are done in a similar way.
\end{proof}

Given $n$ as a starting point and, with some choices of
parameters, we have found $ \tilde{n} > n$ and a factorization
\[
A_{\tilde{n}-1}  \cdots A_{n} = \left[ \begin{array}{cc} C^{1} & C^{2} 
\end{array} \right] 
\left[ \begin{array}{c} B^{1}\\ B^{2} 
\end{array} \right].
\]
Of course, this can be iterated and by pairing the matrix products
in the other way, we produce an equivalent Bratteli diagram for $(G, G^{+})$.
For the moment, it is enough to analyze a single second iteration.

Now, we choose $\tilde{l}_{0}, \tilde{m}$  and
use Propositions \ref{proof:40} and \ref{proof:50}  again to produce
$\tilde{l}, \tilde{s}^{1}, \tilde{s}^{2}, \tilde{J}, \tilde{K},
\tilde{B}^{1}, \tilde{B}^{2}$.
Our interest now is in the matrix
\[
\left[ \begin{array}{c} \tilde{B}^{1} \\ \tilde{B}^{2} 
\end{array} \right] \left[ \begin{array}{cc} C^{1} & C^{2} 
\end{array} \right] 
 = \left[ \begin{array}{cc} \tilde{B}^{1}C^{1} &  \tilde{B}^{1}C^{2} \\
  \tilde{B}^{2}C^{1} &  \tilde{B}^{2}C^{2} \end{array} \right].
  \]
  We will not re-state these hypotheses in what follows.
  
 We have the following estimates.
 
 \begin{prop}
 \label{proof:70}
 For all $1 \leq j \leq J$, $1 \leq \tilde{j}, \tilde{j}' \leq \tilde{J}$,
 we have 
 \begin{enumerate}
 \item 
 \begin{eqnarray*}
 s^{-1} \tilde{s}^{-5}  (\tilde{B}^{1}C^{1})(\tilde{j}, j )
  & \leq l (\tilde{B}^{2}C^{1})(\tilde{j}', j )
 \leq s  \tilde{s}^{5}   (\tilde{B}^{1}C^{1})(\tilde{j}, j ), \\
 s^{-1} \tilde{s}^{-5} (\tilde{B}^{2}C^{2})(\tilde{j}, j ) 
 & \leq l (\tilde{B}^{1}C^{2})(\tilde{j}', j )
 \leq  s\tilde{s}^{5}  (\tilde{B}^{2}C^{2})(\tilde{j}, j ),
 \end{eqnarray*} 
 \item 
 \[
 s^{-1}  \tilde{s}^{-2}  \tilde{m}
  \tilde{K}    K^{-1} \leq 
  \sum_{j=1}^{J} (\tilde{B}^{1}C^{1})(\tilde{j}, j ), 
 \sum_{j=1}^{J} (\tilde{B}^{2}C^{2})(\tilde{j}, j ) \leq  s  
 \tilde{s}^{3}  \tilde{m}
  \tilde{K}    K^{-1} .
 \]
 \end{enumerate}
 \end{prop}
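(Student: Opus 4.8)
The plan is to rewrite each entry of the four blocks $\tilde B^1C^1,\tilde B^2C^1,\tilde B^1C^2,\tilde B^2C^2$ as $\tilde m\tilde K$ times a value of $\sigma_1$ or $\sigma_2$, using part 3 of Proposition~\ref{proof:50} applied at the stage $\tilde n$ (which controls $\tilde B^k(\tilde j,\tilde i)$ up to a factor $\tilde s^5$ by $\tilde m\tilde K\,\sigma_k\circ\gamma_{\tilde n}(\varepsilon_{\tilde i})$), and then to exploit that $\gamma_{\tilde n}$ carries the columns of $C^1$ into the cone $G(l,1)$ and those of $C^2$ into $G(l,2)$: indeed $\eta^1=\gamma_{\tilde n}\circ C^1$ is positive into $(G,G(l,1))$ and $\eta^2=\gamma_{\tilde n}\circ C^2$ into $(G,G(l,2))$, and $C^1,C^2$ have strictly positive entries, so $\eta^1(\varepsilon_j)$ is a nonzero element of $G(l,1)$ and $\eta^2(\varepsilon_j)$ a nonzero element of $G(l,2)$. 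Set $P_k(j)=\sigma_k\circ\eta^1(\varepsilon_j)=\sum_{\tilde i}\sigma_k\circ\gamma_{\tilde n}(\varepsilon_{\tilde i})\,C^1(\tilde i,j)$ and $Q_k(j)=\sigma_k\circ\eta^2(\varepsilon_j)=\sum_{\tilde i}\sigma_k\circ\gamma_{\tilde n}(\varepsilon_{\tilde i})\,C^2(\tilde i,j)$, for $k=1,2$.

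For part 1, fix indices and expand $(\tilde B^1C^1)(\tilde j,j)=\sum_{\tilde i}\tilde B^1(\tilde j,\tilde i)\,C^1(\tilde i,j)$. Since $C^1(\tilde i,j)\ge 0$, the $\sigma_1$-estimate of part 3 of Proposition~\ref{proof:50} (for the tilde data), summed against the column $C^1(\cdot,j)$, gives
\[
\tilde s^{-2}\tilde m\tilde K\,P_1(j)\ \le\ (\tilde B^1C^1)(\tilde j,j)\ \le\ \tilde s^{3}\tilde m\tilde K\,P_1(j),
\]
and the identical computation with $\tilde B^2$ and its $\sigma_2$-estimate gives $\tilde s^{-2}\tilde m\tilde K\,P_2(j)\le (\tilde B^2C^1)(\tilde j',j)\le \tilde s^{3}\tilde m\tilde K\,P_2(j)$. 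Because $\eta^1(\varepsilon_j)$ is a nonzero element of $G(l,1)$, its image $(P_1(j),P_2(j))=\sigma(\eta^1(\varepsilon_j))$ lies in $C(l,1)$, so $0<(l-1)P_2(j)<P_1(j)<(l+1)P_2(j)$, whence $l\,P_2(j)/P_1(j)\in\bigl(l/(l+1),\,l/(l-1)\bigr)\subseteq(s^{-1},s)$ since $l\ge l_0>m$ forces $l\ge m+1$. Multiplying through yields the first pair of inequalities of part 1; the second pair is obtained verbatim after interchanging the subscripts $1,2$ and replacing $C^1,\eta^1,G(l,1)$ by $C^2,\eta^2,G(l,2)$ (for which the defining inequalities of $C(l,2)$ again make $l\,Q_1(j)/Q_2(j)$ lie in $(s^{-1},s)$).

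For part 2, sum the above expansion over $1\le j\le J$: with $e=(1,\dots,1)^{T}\in\Z^{J}$ one gets $\sum_j(\tilde B^1C^1)(\tilde j,j)=\sum_{\tilde i}\tilde B^1(\tilde j,\tilde i)\,(C^1e)(\tilde i)$, where $(C^1e)(\tilde i)\ge 0$ and $\gamma_{\tilde n}(C^1e)=\sum_j\eta^1(\varepsilon_j)=\eta^1(e)$. The same use of part 3 of Proposition~\ref{proof:50} then gives $\tilde s^{-2}\tilde m\tilde K\,\sigma_1\circ\eta^1(e)\le\sum_j(\tilde B^1C^1)(\tilde j,j)\le\tilde s^{3}\tilde m\tilde K\,\sigma_1\circ\eta^1(e)$, so it remains to see that $\sigma_1\circ\eta^1(e)$ differs from $K^{-1}$ by at most a factor $s$. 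By part 2 of Proposition~\ref{proof:50}, $lK\,e\le B^1(lv^1+v^2)\le slK\,e$ in $\Z^{J}$; applying the positive map $\eta^1$ and the state $\sigma_1$, and using $\eta^1\circ B^1=\gamma^1$, gives $lK\,\sigma_1\circ\eta^1(e)\le\sigma_1\circ\gamma^1(lv^1+v^2)\le slK\,\sigma_1\circ\eta^1(e)$. Finally, applying $\sigma_1$ to the containments in parts 3 and 4 of Proposition~\ref{proof:40} (with $w^1=lv^1+v^2$, $w^2=v^1+lv^2$), together with $\sigma_1\circ\gamma(lv^1+v^2)=\sigma_1(lu^1+u^2)=l$, shows $\sigma_1\circ\gamma^1(lv^1+v^2)=l(1+\delta)$ where $|\delta|$ is as small as desired — in fact $|\delta|\le s(ml^2)^{-1}$, using $\gamma^2(w^2\pm ml\,w^1)\in G(l,2)\subseteq G^{+}$ — hence within a factor $s$ of $l$. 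Combining the last two estimates, and running the symmetric argument for $\tilde B^2C^2$, gives part 2.

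The step I expect to be the real nuisance is the exact accounting of the powers of $s$ and $\tilde s$ in part 2: one must check that the deviation of $\sigma_1\circ\gamma^1(lv^1+v^2)$ from $l$, together with the factor-$s$ spread in part 2 of Proposition~\ref{proof:50}, stays inside the single factor $s$ allowed on each side of the conclusion. This is exactly the point at which the sharper estimate of part 3 of Proposition~\ref{proof:40} (rather than part 4 alone), and the freedom to have taken $l$ large relative to $m$, are needed; everything else is routine manipulation of inequalities between non-negative matrices.
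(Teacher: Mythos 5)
Your proof is correct and follows essentially the same route as the paper's. Part 1 is identical: entrywise bounds on $\tilde{B}^{1},\tilde{B}^{2}$ from part 3 of Proposition \ref{proof:50} (applied to the tilde data), summed against the columns of $C^{1}$ (resp.\ $C^{2}$), followed by the cone condition $\sigma\circ\eta^{1}(\varepsilon_{j})\in C(l,1)$ and the observation that $l/(l+1)\geq s^{-1}$ and $l/(l-1)\leq s$ once $l\geq l_{0}>m$ --- this is exactly the paper's computation. The only genuine divergence is in part 2: to pin $\sigma_{1}\circ\gamma^{1}(lv^{1}+v^{2})=\sigma_{1}\circ\gamma_{\tilde{n}}(C^{1}B^{1}(lv^{1}+v^{2}))$ near $l$, the paper cites part 2 of Proposition \ref{proof:60}, which places it in $[s^{-1}l,sl]$, whereas you extract the sharper estimate $l(1+\delta)$, $|\delta|\leq s(ml^{2})^{-1}$, directly from the containments in parts 3 and 4 of Proposition \ref{proof:40}; both suffice, and yours is actually tighter. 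One bookkeeping caveat, which you correctly anticipated as the delicate point: your lower bound in part 2 comes out as $s^{-1}(1-\delta)\tilde{s}^{-2}\tilde{m}\tilde{K}K^{-1}$ rather than exactly $s^{-1}\tilde{s}^{-2}\tilde{m}\tilde{K}K^{-1}$, since $\sigma_{1}\circ\gamma^{1}(lv^{1}+v^{2})$ can fall slightly below $l$; the paper's own cited route fares no better (it yields only $s^{-2}\tilde{s}^{-2}$ there). This discrepancy is harmless where the proposition is used --- the verification of condition 6 of Theorem \ref{intro:40} only needs these ratios bounded by $r_{1}$, and the choices $s<r_{1}^{1/3}$, $\tilde{s}<r_{1}^{1/9}$ leave ample room --- but it is worth being aware that the constant as literally stated is not quite what either argument delivers.
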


\begin{proof}
We consider the first part.
Recall (see the proof of Proposition \ref{proof:60}) that 
for $i=1,2$, the positive homomorphism $\eta^{i}: (\Z^{J},\Z^{J+})
\rightarrow (G, G^{+})$ factors 
through the inductive limit as
$\eta^{i} = \gamma_{\tilde{n}} \circ C^{i}$.

Let $1 \leq j \leq J$, $1 \leq \tilde{j} \leq \tilde{J}$.  Using part 
3 of Proposition \ref{proof:50},
 we have
\begin{eqnarray}
(\tilde{B}^{1}C^{1})(\tilde{j}, j ) &  = & (\tilde{B}^{1}C^{1}\varepsilon_{j})(\tilde{j})\\
     & = & \sum_{\tilde{i}=1}^{I_{\tilde{n}}} \tilde{B}^{1}(\tilde{j}, \tilde{i}) 
      (C^{1}\varepsilon_{j})(\tilde{i}) \\
     & \leq & \sum_{\tilde{i}=1}^{I_{\tilde{n}}} \tilde{s}^{3}\tilde{m}  \tilde{K}
     \sigma_{1} \circ \gamma_{\tilde{n}}(\varepsilon_{\tilde{i}})   
      (C^{1}\varepsilon_{j})(\tilde{i})  \\   \label{eq:92}
       & = &  \tilde{s}^{3}\tilde{m}  \tilde{K}
     \sigma_{1} \circ \gamma_{\tilde{n}}(C^{1}\varepsilon_{j} ) \\
         & = &  \tilde{s}^{3}\tilde{m}  \tilde{K}
     \sigma_{1} \circ \eta^{1}(\varepsilon_{j} ) \\
       &  \leq &  \tilde{s}^{3}\tilde{m}  \tilde{K}
  (l+1)   \sigma_{2} \circ \eta^{1}(\varepsilon_{j} ) 
\end{eqnarray}
and
\begin{eqnarray*}
(\tilde{B}^{1}C^{1})(\tilde{j}, j ) &  = & (\tilde{B}^{1}C^{1}\varepsilon_{j})(\tilde{j})\\
     & = & \sum_{\tilde{i}=1}^{I_{\tilde{n}}} \tilde{B}^{1}(\tilde{j}, \tilde{i}) 
      (C^{1}\varepsilon_{j})(\tilde{i}) \\
     & \geq & \sum_{\tilde{i}=1}^{I_{\tilde{n}}} \tilde{s}^{-2}
     \tilde{m}  \tilde{K}
     \sigma_{1} \circ \gamma_{\tilde{n}}(\varepsilon_{\tilde{i}})   
      (C^{1}\varepsilon_{j})(\tilde{i})  \\  
       & = &  \tilde{s}^{-2}\tilde{m}  \tilde{K}
     \sigma_{1} \circ \gamma_{\tilde{n}}(C^{1}\varepsilon_{j} ) \\
       &  = &  \tilde{s}^{-2}\tilde{m}  \tilde{K}
   \sigma_{1} \circ \eta^{1}(\varepsilon_{j} ).
\end{eqnarray*}

We also have 
\begin{eqnarray*}
(\tilde{B}^{2}C^{1})(\tilde{j}, j ) 
        &  \geq & \sum_{\tilde{i}=1}^{I_{\tilde{n}}} 
        \tilde{s}^{-2} \tilde{m} \tilde{K}  \sigma_{2} 
        \circ \gamma_{\tilde{n}}(\varepsilon_{\tilde{i}}) 
         (C^{1}\varepsilon_{j})(\tilde{i})\\
           &  =  &  \tilde{s}^{-2} \tilde{m} \tilde{K}  \sigma_{2} 
        \circ \gamma_{\tilde{n}}(C^{1}\varepsilon_{j})
\end{eqnarray*}
and as above
\begin{eqnarray*}
(\tilde{B}^{2}C^{1})(\tilde{j}, j ) 
        &  \leq & \tilde{s}^{2} (\tilde{m} +1) \tilde{K}  \sigma_{2} 
        \circ \gamma_{\tilde{n}}(C^{1}\varepsilon_{j}) \\
          &  = & \tilde{s}^{3} \tilde{m} \tilde{K}  \sigma_{2} 
        \circ \gamma_{\tilde{n}}(C^{1}\varepsilon_{j}).
\end{eqnarray*}

We observe that  if $l \geq l_{0} > m$, then 
$l+1 = l(1 + l^{-1}) \leq l ( 1 + l_{0}^{-1}) \leq ls$ and 
$(1-l^{-1})^{-1} \leq s$.

Then, combining
the inequalities above yields the  first line of part 1.
Similar computations which we omit establish the 
 inequalities of the second line of part 1.

For the second part, we begin with the estimate in line (\ref{eq:92})
 above
and follow it by part 2 of Proposition \ref{proof:50}:
\begin{eqnarray*}
\label{eq:100}
(\tilde{B}^{1}C^{1})(\tilde{j}, j ) &  \leq & \tilde{s}^{3}\tilde{m}  \tilde{K}
     \sigma_{1} \circ \gamma_{\tilde{n}}(C^{1}\varepsilon_{j} ) \\
    &  \leq & \tilde{s}^{3}\tilde{m} \tilde{K}
     \sigma_{1} \circ \gamma_{\tilde{n}}(C^{1}\varepsilon_{j} ) 
      l^{-1} K^{-1} B^{1}(lv^{1} + v^{2})(j) \\
  & = &  \tilde{s}^{3}\tilde{m} \tilde{K}  l^{-1}  K^{-1} 
\sigma_{1} \circ \gamma_{\tilde{n}}(C^{1}\varepsilon_{j}) B^{1}(lv^{1} + v^{2})(j) ).
\end{eqnarray*}
We focus on the last term, summing over $j=1, \ldots, J$ and using
the inequality from part 2 of Proposition \ref{proof:60}
\begin{eqnarray*}
\sum_{j=1}^{J}
 \sigma_{1} \circ \gamma_{\tilde{n}}(C^{1}\varepsilon_{j} 
 B^{1}(lv^{1} + v^{2})(j) ) 
    & = &  \sigma_{1} \circ \gamma_{\tilde{n}}(C^{1}B^{1}(lv^{1} + v^{2})) \\
     &  \leq & s \sigma_{1} \circ \gamma_{\tilde{n}}
     (lu^{1}_{\tilde{n}} + u^{2}_{\tilde{n}}) \\
       &  =  &  s \sigma_{1}(lu^{1} + u^{2}) \\
       & = & sl.
\end{eqnarray*}
Therefore, we have
\begin{eqnarray*}
(\tilde{B}^{1}C^{1})(\tilde{j}, j ) &  \leq & 
\tilde{s}^{3}\tilde{m} \tilde{K}  l^{-1}  K^{-1} \sum_{j=1}^{J}
\sigma_{1} \circ 
\gamma_{\tilde{n}}(C^{1}\varepsilon_{j}) B^{1}(lv^{1} + v^{2})(j) ) \\
   &  \leq & s \tilde{s}^{3}\tilde{m}\tilde{K}K^{-1}.
\end{eqnarray*}
The other  inequality for $\tilde{B}^{1}C^{1}$
is obtained using part 1 and
part 1 of Proposition \ref{proof:50}. The two other inequalities for
 $\tilde{B}^{2}C^{2}$ are obtained in a similar way.
\end{proof}

We now turn to the proof of Theorem \ref{intro:40}.
 We will construct the following diagram.
  \[
\xymatrix{
 \Z^{I_{1} }  \ar[rr]^{A_{n_{1}-1} \cdots A_{1}} 
 \ar[rd]_{ B_{1}} &    &  \Z^{I_{n_{1}} }  
 \ar[rd]_{ B_{2}} \ar[rr]^{A_{n_{2}-1} \cdots A_{n_{1}}}
  &  &  \Z^{I_{n_{2}} }  \ar[r] 
 \ar[rd]_{ B_{3}} &    \\
 & \Z^{J_{1}} \oplus \Z^{J_{1}}  \ar[ru]_{C_{1}} 
 \ar[rr]^{B_{2}C_{1}} &  &  \Z^{J_{2}} \oplus \Z^{J_{2}} 
  \ar[ru]_{C_{2}} \ar[rr]^{B_{3}C_{2}}   &  & 
 \Z^{J_{3}} \oplus \Z^{J_{3}}  } 
\]
 The positive integers $I_{n}$ and the positive integer 
 matrices $A_{n}$ are the ones with which we began.
 
 The values of $J_{i}, n_{i}$ and matrices 
 $B_{i} = \left[ \begin{matrix} B_{i}^{1} 
 \\ B_{i}^{2} \end{matrix} \right]$
  and  
  $C_{i} = \left[ \begin{matrix} C_{i}^{1} 
  & C_{i}^{2} \end{matrix} \right]$
  are then obtained inductively. Specifically,
   $J_{i}, B_{i}$ are 
  obtained by 
  application of 
  Proposition \ref{proof:50}, while $n_{i}, C_{i}$ are 
  obtained by 
  application of 
  Proposition \ref{proof:60}.
  
  Let us proceed more carefully. 
  We begin by applying Proposition \ref{proof:50} using 
  $I=I_{1}, K_{0} = 4$
  and $m$ chosen so that 
  $s = 1 + m^{-1} < r_{1}^{1/3}$.  
   We also use $v^{1}=u_{1}^{1}, v^{2}=u_{1}^{2}$.
The result provides us with 
$l \geq l_{0}, K \geq K_{0}, J$ and matrices $B^{1}$ and $B^{2}$.
   These we define as $J_{1}=J, l_{1}=l, K_{1} = K, B^{1}_{1}= B^{1}$ 
   and $B^{2}_{1}=B^{2}$. 
We also define $u^{1,1}_{1}= B^{1}v^{1},  u^{1,2}_{1}= B^{2}v^{1}, 
 u^{2,1}_{1}= B^{1}v^{2},u^{2,2}_{1}= B^{2}v^{2}$.
 
 Among the conclusions, we note that the fourth property 
 of Proposition 
 \ref{proof:50} along with the choice  $s = 1 + m^{-1} < r_{1}^{1/3}$
   implies condition 4 of 
 Theorem \ref{intro:40} for $n=1$.
 
 Next, given this data, we apply Proposition \ref{proof:60}
 which gives us $\tilde{n} > 1$ and matrices $C^{1}, C^{2}$.
 We will define $n_{1}=\tilde{n}, C^{1}_{1} = C^{1}, C^{2}_{1} = C^{2}$.
 
 We now return to a second application of Proposition \ref{proof:50}.
 As we indicated in our notation early, we will use
 as input, 
 $\tilde{I}=I_{n_{1}}, \tilde{K}_{0}= 4K_{1},  \tilde{a}= a_{1},
  \tilde{m}$ and $\tilde{l}_{0}$.
 We use $v^{1} = u_{n_{1}}^{1}, v^{2} = u_{n_{1}}^{2}$.
 We will choose $\tilde{m} $ such
  that $\tilde{s} = 1 + \tilde{m}^{-1}$ satisfies
 $\tilde{s} < \min \{ r_{2}^{1/3}, r_{1}^{1/9} \}$.
We also choose $\tilde{l_{0}} > \tilde{m}$ and also 
 $\tilde{l_{0}} \geq l_{1}$.
The application gives 
 us $\tilde{l} \geq \tilde{l}_{0}, \tilde{K} \geq \tilde{K}_{0}, \tilde{J}$ 
 and matrices
 $\tilde{B}^{1}, \tilde{B}^{2}$. We define 
 $B^{1}_{2}= \tilde{B}^{1}, B^{2}_{2} = \tilde{B}^{2}$, $K_{2} = \tilde{K}, 
 l_{2} = \tilde{l}$ and $M_{1} = \tilde{m} \tilde{K} K_{1}^{-1}$.
 We also define $u^{1,1}_{2}= \tilde{B}^{1}v^{1},  u^{1,2}_{2}= 
 \tilde{B}^{2}v^{1}, 
 u^{2,1}_{2}= \tilde{B}^{1}v^{2},u^{2,2}_{2}= \tilde{B}^{2}v^{2}$. 
  Notice that the entries
 of the matrices $\tilde{B}^{1}, \tilde{B}^{2}$ are all bounded below
 by $a_{1}$ and since $C^{1}, C^{2}$ have positive integer entries,
 the same is true of the products $\tilde{B}^{1}C^{2}, 
 \tilde{B}^{2}C^{1}$.

 To verify the desired conditions, once again estimate 4 holds for
 $n=2$ from the third part of the conclusion of  Proposition \ref{proof:50}
 along with our choice  $\tilde{s} <  r_{2}^{1/3}$.
 
 The first inequality of part 5 has been noted above already. 
 The others follow from the first part of the 
 conclusion of Proposition \ref{proof:70}
 and the fact that 
 $s\tilde{s}^{5} < r_{1}^{1/3}(r_{1}^{1/9})^{5} = r_{1}^{8/9}< r_{1}$.
 Part 6 of the conclusion follows from the second part of the 
 conclusion of Proposition \ref{proof:70} and  the estimate above.
 
 The rest of the inductive system is 
 obtained by repetition of these arguments.
 We finally mention that in further applications 
 of Proposition \ref{proof:50}, we will choose $m $ greater than
 the last 
 $M_{n}$ which has been defined. Similarly, 
 in the applications, we will also use
 $K_{0} $ to be greater than the last $K_{n}$ which was defined. 
 These choices will ensure that the sequences $M_{n}$ and $K_{n}$
 are  increasing.
 
 This completes the proof of Theorem \ref{intro:40}.

\bibliographystyle{amsplain}

\end{document}